\numberwithin{equation}{section}
\newcommand{\R}{\mathbb{R}}
\newcommand{\dis}{\displaystyle}
\renewcommand{\theequation}{\arabic{section}.\arabic{equation}}
\newtheorem{Thm}{Theorem}[section]
\newtheorem{Lem}[Thm]{Lemma}
\newtheorem{Cor}[Thm]{Corollary}
\newtheorem{Prop}[Thm]{Proposition}
\newtheorem{Rem}[Thm]{Remark}
\begin{document}

\title[Boundary Layers for the Lane-Emden Systems]{Existence of Boundary Layers for the supercritical Lane-Emden Systems}

\author{Qing Guo,\,\, Junyuan Liu, \,\, Shuangjie Peng}
 \address[Qing Guo]{College of Science, Minzu University of China, Beijing 100081, China} \email{guoqing0117@163.com}

 \address[Junyuan Liu]{ School of Mathematics and  Statistics, Central China Normal University, Wuhan, P.R. China}\email{jyliuccnu@163.com}

\address[Shuangjie Peng]{ School of Mathematics and  Statistics, Central China Normal University, Wuhan, P.R. China}\email{ sjpeng@ccnu.edu.cn}

\keywords {
Lane-Emden systems; Critical and supercritical exponents;  Multi-bubbling solutions; Reduction method; Nonlinear elliptic boundary value problem}

\date{\today}

\begin{abstract}
We consider the following supercritical problem for the Lane-Emden system:
\begin{equation}\label{eq00}
\begin{cases}
-\Delta u_1=|u_2|^{p-1}u_2\  &in\ D,\\
-\Delta u_2=|u_1|^{q-1}u_1 \  &in\ D,\\
u_1=u_2=0\ &on\ \partial D,
\end{cases}
\end{equation}
where $D$ is a bounded smooth domain in $\R^N$, $N\geq4.$ What we mean by supercritical is that the exponent pair $(p,q)\in(1,\infty)\times(1,\infty)$ satisfies $\frac1{p+1}+\frac1{q+1}<\frac{N-2}N$. We prove that for some suitable domains $D\subset\R^N$, there exist positive solutions with  layers concentrating along one or several $k$-dimensional sub-manifolds of $\partial D$ as $$\frac1{p+1}+\frac1{q+1}\rightarrow\frac{n-2}{n},\ \ \ \ \frac{n-2}{n}<\frac1{p+1}+\frac1{q+1}<\frac{N-2}N,$$
 where $n:=N-k$ with $1\leq k\leq N-3$.

By transforming the original problem \eqref{eq00} into a lower $n$-dimensional weighted system, we carry out the reduction framework and apply the  blow-up analysis. The properties of the ground states related to the limit problem play a crucial role in this process. The corresponding exponent pair $(p_0,q_0)$, which represents the limit pair of $(p,q)$, lies on the critical hyperbola $\frac n{p_0+1}+\frac n{q_0+1}=n-2$. It is widely recognized that the range of the smaller exponent, say $p_0$, has a profound impact on the solutions, with $p_0=\frac n{n-2}$ being a threshold.

It is worth emphasizing that this paper tackles the problem by considering two different ranges of $p_0$, which is contained in $p_0>\frac n{n-2}$ and $p_0<\frac n{n-2}$ respectively. The coupling mechanisms associated with these ranges are completely distinct, necessitating different treatment approaches. This represents the main challenge overcome and the novel element of this study.
\end{abstract}

\maketitle
%\baselineskip 18p

\section{Introduction and main results}
\setcounter{equation}{0}
\subsection{Backgrounds}
The standard Lane-Emden system
%In this paper, we are concerned with  the following elliptic system of Hamiltonian type
\begin{equation}\label{eq1}
\begin{cases}
-\Delta u_1=|u_2|^{p-1}u_2\  &in\ D,\\
-\Delta u_2=|u_1|^{q-1}u_1 \  &in\ D,\\
u_1=u_2=0,\ &on\ \partial D,
\end{cases}
\end{equation}
%where $D$ is a bounded smooth domain in $\R^N$ with $N\geq4$.
\iffalse
We are aimed to show that %and $p,q\in(1,\infty)$ satisfying $\frac1{p+1}+\frac1{q+1}<\frac{N-2}N$.
for each set of positive integers $k_1,k_2,\ldots,k_m$ with $k_1+k_2+\cdots+k_m=k\leq N-3$ and for any $p,q\in(1,\infty)$ satisfying $\frac1{p+1}+\frac1{q+1}<\frac{N-2}N$ and
 $\frac1{p+1}+\frac1{q+1}\downarrow\frac{N-k-2}{N-k}$, problem
\eqref{eq1} possesses a positive solution, which concentrates along a $k$-dimensional sub-manifold of the boundary
$\partial D$, diffeomorphic to the product of spheres $\mathbb S^{k_1}\times \mathbb S^{k_2}\times\ldots\times \mathbb S^{k_m}$.
\fi
%The standard Lane-Emden system \eqref{eq1}
with a smooth bounded domain  $D\subset\R^N$ for $N\geq3$ and $p,q\in(0,\infty)$
is a typical Hamiltonian-type strongly coupled elliptic systems, which have been a subject of intense interest and has a rich structure.
Due to the fact that  tools  for analyzing a single equation cannot be used in a direct way to treat the systems, there have been very few results on the existence of solutions for strongly indefinite systems and their qualitative properties.
 One of the first result about positive solutions of \eqref{eq1} appeared in \cite{c-f-m} based on topological arguments. In \cite{f-f}, a variational argument relying on a linking theorem was used to show an existence result.
 %Then many efforts have been devoted to the variational study on such elliptic system leading to strongly indefinite functionals.
 In \cite{b-s-r}, the existence, positivity and uniqueness of ground state solutions for \eqref{eq1} was studied.  One may also refer to  \cite{s} and the surveys in \cite{f}.

 It is well known that the system is strongly affected by the values of the couple $(p,q)$.
 The existence theory is associated with the critical hyperbola
\begin{align}\label{pq}
\frac1{p+1}+\frac1{q+1}=\frac{N-2}N,
\end{align}
 which was introduced by  \cite{clement-figueiredo-mitidieri}
and \cite{vorst}.
We also have already known that in the critical or supercritical case, i.e.  $
\frac1{p+1}+\frac1{q+1}\leq\frac{N-2}N,$ if the domain $D$ is star-shaped, then \eqref{eq1} has no solutions.
According to \cite{figueiredo-felmer,hulshof-vorst}
and \cite{bonheure},
if $pq\neq1$ and  in the subcritical case
$
\frac1{p+1}+\frac1{q+1}>\frac{N-2}N,
$
then problem \eqref{eq1} has a solution.
 Moreover, Kim and Moon  \cite{kim-moon} considered the family of positive solutions of \eqref{eq1} on a smooth bounded convex domain $D$ in $\R^N$
 \iffalse
\begin{equation*}
\begin{cases}
-\Delta u_1=u_2^{p},\  &in\ D,\\
-\Delta u_2=u_1^{q_\epsilon}, \  &in\ D,\\
u_1,u_2>0,\ &in\ D,\\
u_1=u_2=0,\ &on\ \partial D,
\end{cases}
\end{equation*}
\fi
for $N\geq4$, $\max\{1,\frac3{N-2}\}<p<q$ with subcritical condition $
\frac1{p+1}+\frac1{q+1}=\frac{N-2+\epsilon}N>\frac{N-2}N$, and show that the multiple bubbling phenomena may arise with a detailed qualitative and quantitative description.

The first study by use of the Lyapunov reduction method about the bubbling solution was obtained by Kim and Pistoia
 %By employing the non-degeneracy result on the standard bubble due to \cite{frank-kim-pistoia}, Kim and Pistoia
 in \cite{KP}, where they  built multi-bubble solutions
to some critical problem, that is the
Brezis-Nirenberg type problem associated to \eqref{eq1}:
\begin{equation}\label{eqkp}
\begin{cases}
-\Delta u_1=|u_2|^{p-1}u_2+\epsilon(\alpha u_1+\beta_1u_2)\  &in\ D,\\
-\Delta u_2=|u_1|^{q-1}u_1+\epsilon(\beta_2 u_1+\alpha u_2) \  &in\ D,\\
u_1=u_2=0\ &on\ \partial D,
\end{cases}
\end{equation}
where $D$ is a smooth bounded domain in $\R^N$, $N\geq3$, $\epsilon>0$ is a small parameter, $\alpha, \beta_1, \beta_2$ are real numbers, and $p,q$ lie on
the critical hyperbola \eqref{pq}. Notice that they focus on the case $p\in(1,\frac{N-1}{N-2})$.
\iffalse
In fact, they also mentioned the almost critical case
\begin{equation}\label{eqkp2}
\begin{cases}
-\Delta u_1=u_2^{p-\alpha\epsilon},\  &in\ D,\\
-\Delta u_2=u_1^{q-\beta\epsilon}, \  &in\ D,\\
u_1,u_2>0,\ &in\ D,\\
u_1=u_2=0,\ &on\ \partial D,
\end{cases}
\end{equation}
with $\frac\alpha{(p+1)^2}+\frac\beta{(q+1)^2}>0$, and gave the existence of positive solutions which blow up at one or more points of $D$ as $\epsilon\rightarrow0$.

and as indicated by  \cite{kim-moon},  their method  can also cover the case $p\in[\frac{N-1}{N-2},\frac{N+2}{N-2})$.
\fi

\medskip
It is natural to believe that the system involving
 the supercritical condition $\frac1{p+1}+\frac1{q+1}<\frac{N-2}N$ would be more complex, and the existence of a nontrivial homology class in $D$ does not guarantee the existence of a nontrivial solution to \eqref{eq1}.  This can be seen  from the single Lane-Emden-Fowler problem
\begin{equation}\label{eqs}
-\Delta v=|v|^{p-1}v\ \ in\ D,\ \ v=0\ \ on\ \partial D.
\end{equation}
More precisely, for each integer $k$ such that $1\leq k\leq N-3$, Passaseo \cite{passaseo} found a bounded domain in $\R^N$, which is homotopically equivalent to the $k$-dimensional sphere, and proved that problem \eqref{eqs} does not have a nontrivial solution for $p+1\geq 2^*_{N,k}=\frac{2(N-k)}{N-k-2}$.  Clapp,  Faya and  Pistoia in \cite{c-f-p} gave
some examples of domains with richer homology, in which \eqref{eqs} does not have nontrivial solutions for $p>2^*_{N,k}-1$. On the other hand, for $p= 2^*_{N,k}-1$,
Wei and Yan \cite{wei-yan-jmpa} constructed infinitely many solutions of \eqref{eqs} in some domains. In \cite{ackermann-clapp-pistoia}, solutions of \eqref{eqs} concentrating at a $k$-dimensional sub-manifold for $p$ slightly below  $2^*_{N,k}$ were established.

\medskip
\subsection{Setting and assumptions}
In this present work, we are to investigate the supercritical problem of \eqref{eq1}.

We are aimed to show that %and $p,q\in(1,\infty)$ satisfying $\frac1{p+1}+\frac1{q+1}<\frac{N-2}N$.
for each set of positive integers $k_1,k_2,\ldots,k_m$ with $k_1+k_2+\cdots+k_m=k\leq N-3$ and for any $p,q\in(1,\infty)$ satisfying  $$\frac1{p+1}+\frac1{q+1}\rightarrow\frac{N-k-2}{N-k},\ \ \ \ \frac{N-k-2}{N-k}<\frac1{p+1}+\frac1{q+1}<\frac{N-2}N,$$
problem
\eqref{eq1} possesses a positive solution, which concentrates along a $k$-dimensional sub-manifold of the boundary
$\partial D$, diffeomorphic to the product of spheres $\mathbb S^{k_1}\times \mathbb S^{k_2}\times\ldots\times \mathbb S^{k_m}$.
\iffalse
for each set of positive integers $k_1,k_2,\ldots,k_m$ with $k_1+k_2+\ldots+k_m=k\leq N-3$, we prove that for some domain $D$,  problem
\eqref{eq1} possesses a positive solution for each supercritical couple $(p,q)$ satisfying
$
\frac1{p+1}+\frac1{q+1}<\frac{N-2}N$ and $\frac1{p+1}+\frac1{q+1}\downarrow\frac2{2^*_{N,k}}$. These solutions concentrate along a k-dimensional sub-manifold of the boundary
$\partial D$, diffeomorphic to the product of spheres $\mathbb S^{k_1}\times \mathbb S^{k_2}\times\ldots\times \mathbb S^{k_m}$. For this purpose,
\fi

For this purpose, we assume a bounded smooth domain $$\Omega\subset
\R^{n},\ \ with\ \ n=N-k$$ such that
\begin{align}
\overline\Omega\subset\{(x^1,\ldots,x^m,x')\in\R^m\times\R^{N-m-k}:x^i>0,i=1,\ldots,m\}.
\end{align}
Correspondingly,
\begin{align}
D:=\{(y^1,\ldots,y^m,z)\in\R^{k_1+1}\times\cdots\times\R^{k_m+1}\times\R^{N-m-k}:(|y^1|,\ldots,|y^m|,z)\in\Omega\}.
\end{align}
Then $D$ is a bounded smooth domain in $\R^N$ and invariant under the action of the group
 $\mathcal O:=\mathcal O(k_1+1)\times\ldots\times \mathcal O(k_m+1)$ on $\R^N$, where $\mathcal O(d)$ denotes
the group of all linear isometries of $\R^d$.

\medskip
In this paper, {\bf we assume that $(p_0,q_0)$, which is on the critical hyperbola:
\begin{align}\label{p0q0}\frac1{p_0+1}+\frac1{q_0+1}=\frac{n-2}{n},
\end{align}
satisfies that \begin{align}\label{P}
p_n<p_0<q_0\ \ with\ \ p_n=\max\Big\{1,\frac{3+\sqrt{4n+1}}{2(n-2)}\Big\}<\frac n{n-2}.
\end{align} }%, which means $p_0<\frac {n+2}{n-2}$ while $q_0>\frac {n+2}{n-2}$. 
\iffalse
 and $$p_0>\frac{n}{n-2},$$ or
\begin{align}\label{p0small}
\frac{n+2+\sqrt{(n+2)^2+8(n-2)^2}}{4(n-2)}<p_0<\frac n{n-2}.
\end{align}
\fi

Setting $$p=p_0-\alpha\epsilon,\ \ \ \ q=q_0-\beta\epsilon$$ with $\epsilon>0$, we search for $\mathcal O$-invariant solutions to \eqref{eq1} of the form
\begin{equation}\label{uv}
u_i(y^1,\ldots,y^m,z)=v_i(|y^1|,\ldots,|y^m|,z),\ \  i=1,2.
\end{equation}
Note that $(u_1,u_2)$ solves problem \eqref{eq1} if and only if $(v_1,v_1)$ solves
\begin{align}\label{eqv}
\begin{cases}
-div(a(x)\nabla v_1)=a(x)|v_2|^{p-1}v_2\ \ &in\ \Omega,\\
-div(a(x)\nabla v_2)=a(x)|v_1|^{q-1}v_1\ \ &in\ \Omega,\\
v_1=v_2=0\ \ &on\ \partial\Omega,
\end{cases}
\end{align}
where $a(x)=a(x^1,\ldots,x^{n})=(x^1)^{k_1}(x^2)^{k_2}\cdots (x^m)^{k_m}\in C^2(\overline\Omega)$ is strictly positive on $\overline\Omega$.

Then we are equivalently to construct solutions $(v_{1,\epsilon},v_{2,\epsilon})$ of \eqref{eqv}, which concentrates at some points $\xi_1,\ldots,\xi_\kappa\in\partial\Omega$
as $\epsilon\rightarrow0$. Correspondingly, by \eqref{uv}, there exists a solution $(u_{1,\epsilon},u_{2,\epsilon})$ of \eqref{eq1} with positive layers accumulating
along the $k$-dimensional sub-manifolds
\begin{align*}
M_j=\{(y^1,\ldots,y^m,z)\in\R^{k_1+1}\times\cdots\times\R^{k_m+1}\times\R^{N-m-k}:(|y^1|,\ldots,|y^m|,z)=\xi_j\}
\end{align*}
of the boundary $\partial D$ as $\epsilon\rightarrow0$, which is diffeomorphic to $\mathbb S^{k_1}\times\cdots\times \mathbb S^{k_m}$ where
$\mathbb S^d$ is the unit sphere in $\R^{d+1}$.

\medskip

More generally, we are to study  problem \eqref{eqv} with some  potential function $a\in C^2(\overline\Omega)$ which is strictly positive on $\overline\Omega$ and satisfies some more general conditions:\\
{\bf (a)} There exist $\kappa$ non-degenerate critical points $\tilde\xi_1,\ldots,\tilde\xi_\kappa\in\partial\Omega$ of the restriction of $a$ to
$\partial \Omega$ such that $$\langle\nabla a(\tilde\xi_i),\nu(\tilde\xi_i)\rangle>0,\ \ \forall i=1,\ldots,\kappa,$$
where $\nu(\tilde\xi_i)$ is the inward pointing unit normal to $\partial\Omega$ at $\tilde\xi_i$.

\begin{figure}
%\small
%\centering
\includegraphics[width=9cm]{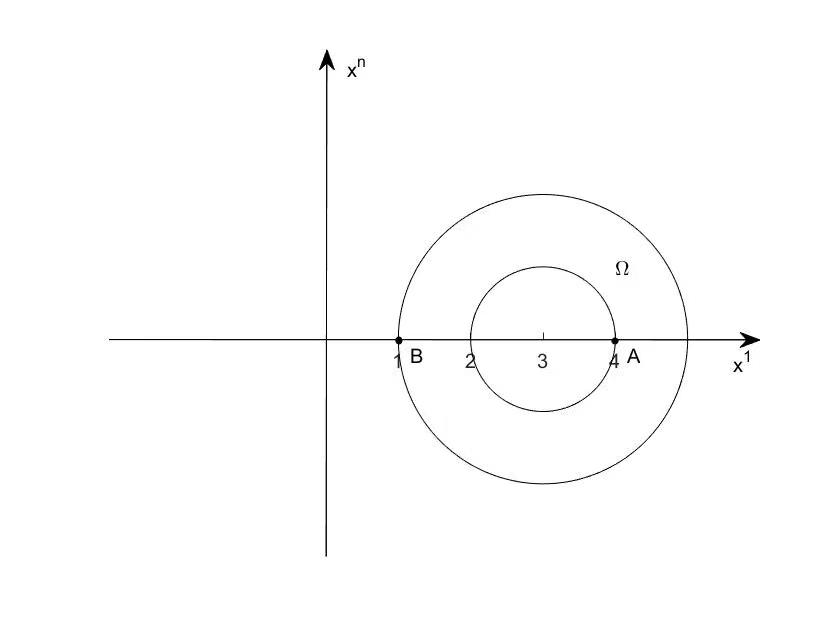}\\
\caption{$\Omega=\{(x^1,x^2,\ldots,x^n):1<(x^1-3)^2+(x^2)^2+\ldots+(x^n)^2<4\}$, $a(x)=(x^1)^k$, $\nu(A)=\nu(B)=(1,0,\ldots,0)$.
}
\label{1}
\end{figure}
\begin{figure}
%\small
%\centering
\includegraphics[width=9cm]{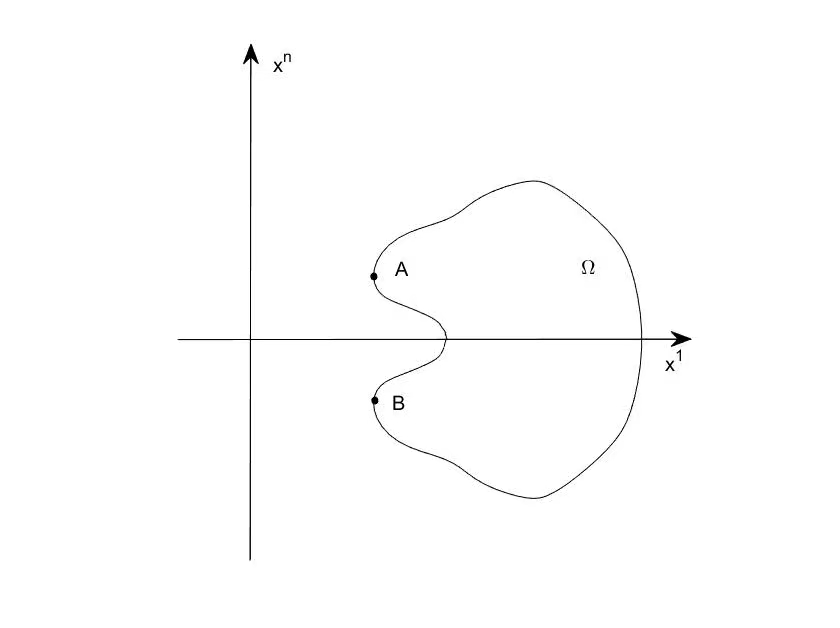}\\
\caption{ $\Omega$ is a bounded domain.}
\label{1}
\end{figure}
For instance, the domains $\Omega$ and points $A,B$ in Figure 1 and Figure 2 satisfy {\bf (a)}.

\medskip
\subsection{Main results}

Before giving our main theorem, we first briefly introduce the limit problem, leaving more details in Section 2.
A positive ground state $(U,V)$ to the following system was found in \cite{lions},
\begin{align}\label{eqUV}
\begin{cases}
&\displaystyle-\Delta U=|V|^{p_0-1}V,\ \ in\ \ \mathbb R^n,\\
&\displaystyle-\Delta V=|U|^{q_0-1}U,\ \ in\ \ \mathbb R^n,\\
&\displaystyle(U,V)\in \dot{W}^{2,\frac{p_0+1}{p_0}}(\mathbb R^n)\times\dot{W}^{2,\frac{q_0+1}{q_0}}(\mathbb R^n),
\end{cases}
\end{align}
where $n\geq3$ and $(p_0,q_0)$ satisfy \eqref{p0q0}.
By Sobolev embeddings, there holds that
\begin{align}\label{emb}
\begin{cases}
\displaystyle \dot{W}^{2,\frac{p_0+1}{p_0}}(\mathbb R^n)\hookrightarrow \dot{W}^{1,p^*}(\mathbb R^n)\hookrightarrow L^{q_0+1}(\R^n),\\
\displaystyle  \dot{W}^{2,\frac{q_0+1}{q_0}}(\mathbb R^n)\hookrightarrow \dot{W}^{1,q^*}(\mathbb R^n)\hookrightarrow L^{p_0+1}(\R^n),
\end{cases}
\end{align}
with
$$\frac1{p^*}=\frac {p_0}{p_0+1}-\frac1n=\frac1{q_0+1}+\frac1n,\ \ \ \ \frac1{q^*}=\frac {q_0}{q_0+1}-\frac1n=\frac1{p_0+1}+\frac1n,$$
and so the following energy functional is well-defined in $\dot W^{2,\frac{p_0+1}{p_0}}(\R^n)\times\dot W^{2,\frac{q_0+1}{q_0}}(\R^n)$:
\begin{align*}
\tilde I_0(u,v):=\int_{\R^n}\nabla u\cdot\nabla v
-\frac1{p_0+1}\int_{\R^n}| v|^{p_0+1}-\frac1{q_0+1}\int_{\R^n}|u|^{q_0+1}.
\end{align*}
According to \cite{alvino-lions-trombetti},  the ground state is radially symmetric and decreasing up to a suitable translation.
Thanks to  \cite{hulshof-vorst} and \cite{wang}, the positive ground state $(U_{0,1},V_{0,1})$ of \eqref{eqUV} is unique with $U_{0,1}(0)=1$
and the family of functions
\begin{align*}
(U_{\xi,\lambda}(y),V_{\xi,\lambda}(y))=(\lambda^{\frac n{q+1}}U_{0,1}(\lambda(y-\xi)),\lambda^{\frac n{p+1}}V_{0,1}(\lambda(y-\xi)))
\end{align*}
for any $\lambda>0,\xi\in\mathbb R^n$ also solves system \eqref{eqUV}.
Sharp asymptotic behavior  of the ground states to \eqref{eqUV}  (see \cite{hulshof-vorst}) and the non-degeneracy for \eqref{eqUV}  at each ground state (see \cite{frank-kim-pistoia}) play an important role to construct bubbling solutions especially using the Lyapunov-Schmidt reduction methods.

\medskip

Our main results in this paper can be stated as follows.
\begin{Thm}\label{th1}
Given $1\leq k\leq N-3$, there exists $\epsilon_0>0$ such that, for each $\epsilon\in(0,\epsilon_0)$, problem \eqref{eqv} has a solution $(v_{1,\epsilon},v_{2,\epsilon})$
of the form
\begin{align}\label{constructv}
v_{1,\epsilon}=\sum_{i=1}^\kappa U_i+o(1),\ \ \
v_{2,\epsilon}=\sum_{i=1}^\kappa V_i+o(1)
\end{align}
where $U_i=U_{\xi_{i,\epsilon},\delta_{i,\epsilon}},V_i=V_{\xi_{i,\epsilon},\delta_{i,\epsilon}}$ and
 $\epsilon^{-\frac{n-1}{n-2}}\delta_{i,\epsilon}\rightarrow \Lambda_i>0$, $\xi_{i,\epsilon}\rightarrow\tilde\xi_i\in\partial\Omega$ for $i=1,\ldots,\kappa$ as $\epsilon\rightarrow0$.

\end{Thm}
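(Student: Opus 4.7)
The plan is a Lyapunov--Schmidt finite-dimensional reduction applied to the weighted lower-dimensional system \eqref{eqv}. For each $i=1,\dots,\kappa$ I pick concentration parameters $\xi_i\in\Omega$ near $\tilde\xi_i$ and scales $\delta_i>0$, let $(U_i,V_i):=(U_{\xi_i,\delta_i},V_{\xi_i,\delta_i})$ be the bubbles from \eqref{eqUV}, and define their projections $PU_i,PV_i$ onto functions vanishing on $\partial\Omega$ via the Green's function of $-\operatorname{div}(a\nabla\cdot)$. The solution is sought as
\begin{equation*}
v_{1,\epsilon}=\sum_{i=1}^\kappa PU_i+\phi_1,\qquad v_{2,\epsilon}=\sum_{i=1}^\kappa PV_i+\phi_2,
\end{equation*}
with a small remainder $(\phi_1,\phi_2)$ orthogonal, in a duality pairing matched to the Hamiltonian structure of \eqref{eqUV}, to the $(n+1)\kappa$-dimensional approximate kernel spanned by the scale and translation derivatives $\partial_{\delta_i}(U_i,V_i)$ and $\partial_{\xi_i}(U_i,V_i)$.

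The linearization of \eqref{eqv} at this ansatz reduces, to leading order, to a direct sum of single-bubble operators
\begin{equation*}
\mathcal L_i(\phi_1,\phi_2)=\bigl(-\Delta\phi_1-p_0 V_i^{p_0-1}\phi_2,\;-\Delta\phi_2-q_0 U_i^{q_0-1}\phi_1\bigr),
\end{equation*}
whose kernels are, by the non-degeneracy result of Frank--Kim--Pistoia, spanned exactly by the scale and translation modes above. Working in the dual Sobolev product $\dot W^{2,(p_0+1)/p_0}(\Omega)\times\dot W^{2,(q_0+1)/q_0}(\Omega)$ (or the equivalent dual formulation exploiting the Hamiltonian structure) yields a uniformly invertible Fredholm theory on the orthogonal complement. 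The error produced by the ansatz carries three small quantities: the variation $a(x)-a(\xi_i)=O(\delta_i)$ across a bubble's support, the projection defect $PU_i-U_i$ controlled by the distance from $\xi_i$ to $\partial\Omega$, and the supercritical shift $(p,q)-(p_0,q_0)=-\epsilon(\alpha,\beta)$ producing a contribution of order $\epsilon\log(1/\delta_i)$. A contraction argument then solves the auxiliary infinite-dimensional equation uniquely, delivering $(\phi_{1,\epsilon},\phi_{2,\epsilon})$ that depends smoothly on $(\xi,\delta)$, with bubble-interaction terms negligible since the $\tilde\xi_i$ are distinct.

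Substituting $(\phi_{1,\epsilon},\phi_{2,\epsilon})$ into the energy associated with \eqref{eqv} yields a reduced function $J_\epsilon(\xi,\delta)$ whose leading $\epsilon\to 0$ expansion is a sum over $i$ of: a self-energy proportional to $a(\xi_i)$ constant in $\delta_i$, a tangential term driving $\xi_i$ toward critical points of the restriction of $a$ to $\partial\Omega$, a boundary-distance penalty involving $\langle\nabla a(\xi_i),\nu(\xi_i)\rangle$, and a supercritical correction of the form $\epsilon a(\xi_i)\log(1/\delta_i)$. Balancing the last two contributions forces $\delta_i=\Lambda_i\epsilon^{(n-1)/(n-2)}+o(\epsilon^{(n-1)/(n-2)})$ with $\Lambda_i>0$ determined explicitly by $\langle\nabla a(\tilde\xi_i),\nu(\tilde\xi_i)\rangle>0$, and the non-degeneracy of $\tilde\xi_i$ as a critical point of the restriction of $a$ to $\partial\Omega$ produces a $C^1$-stable critical point of $J_\epsilon$ near $(\tilde\xi_i,\Lambda_i\epsilon^{(n-1)/(n-2)})$. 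The standard reduction lemma then upgrades this to a genuine solution of \eqref{eqv} of the form \eqref{constructv}.

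The main technical obstacle, as stressed in the abstract, is that the argument genuinely bifurcates according to whether $p_0>n/(n-2)$ or $p_0\in(p_n,n/(n-2))$. In the former regime $V_{0,1}$ decays fast enough that a conventional $L^2$-type analysis of $\mathcal L_i$ is available, and the dominant error terms and bubble-interaction estimates take a familiar form. In the latter regime the slower decay of $V_{0,1}$ forces a genuinely asymmetric functional framework based on the dual Sobolev pair above, and the leading-order terms in both the error estimate and the reduced expansion acquire different orders than in the first regime. Controlling the Hamiltonian coupling and the weighted structure simultaneously in both regimes, and producing a reduced expansion sharp enough to identify the boundary-gradient condition $\langle\nabla a(\tilde\xi_i),\nu(\tilde\xi_i)\rangle>0$ together with the correct scaling $\delta_i\sim\epsilon^{(n-1)/(n-2)}$, is where the core difficulty lies.
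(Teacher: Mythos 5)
Your proposal follows essentially the same route as the paper: a Lyapunov--Schmidt reduction with projected bubbles $PU_i,PV_i$, orthogonality to the $(n+1)\kappa$ modes covered by the Frank--Kim--Pistoia non-degeneracy, a contraction argument for the remainder, and a reduced energy whose expansion (self-energy in $a(\xi_i)$, boundary term $\langle\nabla a(\xi_i),\nu(\xi_i)\rangle t_i$, projection-defect term, and the $\epsilon\log$ correction from $(p,q)-(p_0,q_0)$) is balanced at non-degenerate boundary critical points of $a$, exactly as in Sections 3--5 and the Appendix. The only caveat is the concentration rate: the balance you describe gives $\delta_i\sim\Lambda_i\epsilon^{\frac{n-1}{n-2}}$ only when $p_0>\frac{n}{n-2}$, whereas in the regime $p_0<\frac{n}{n-2}$ the slower decay of $U$ changes the projection-defect term to $(\Lambda_i/2t_i)^{(n-2)p_0-2}$ and the paper accordingly takes $\delta_{i,\epsilon}=\epsilon^{\frac{(n-2)p_0-1}{(n-2)p_0-2}}\Lambda_i$, so your uniform statement of the rate should be adjusted in that case.
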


\begin{Rem}
Note that in condition \eqref{P}, $p_n=1$ when $n\geq6$, which indicates that {\bf in the case of $n\geq6$ we in fact span the entire range of $p_0\in(1,\frac{n+2}{n-2})$}.
\smallskip

The condition \eqref{P} covers three distinct ranges for the parameter $p_0$:  $p_n<p<\frac n{n-2}, p=\frac n{n-2}$ and $\frac n{n-2}<p<\frac{n+2}{n-2}$. Since the case when $p_0=\frac n{n-2}$ can be treated by slightly modifying the proof of that when $p_0>\frac n{n-2}$ (in view of Lemma \ref{lemasym}), so we omit the details to maintain focus.

 \medskip
 
 The coupling mechanism of the strongly indefinite problem in these two cases is totally different.  Even in the case of $p_0>\frac{n}{n-2}$, the blow-up scenario is not the same as that of the single Lane-Emden equation, and the standard approach does not work well, which forces us to adopt some new approach and analysis.

\medskip

It is worth noting that  when $p_0<\frac n{n-2}$, the system \eqref{eq1} exhibits   stronger nonlinear feature that the single equation does not have. 
%This can be seen from section 2 below that
The essential reason  lies in the fact that the decay order at infinity of $U$ is strictly smaller than that of the fundamental  solution of $-\Delta$ in $\R^n$ (see Lemma \ref{lemasym}), resulting in significant differences in the properties of the solution compared to the single equations.
\medskip

Unlike  $p_0>\frac n{n-2}$, when $p_0<\frac n{n-2}$, the approximate solution of the Lane-Emden system cannot simply be expressed using the ground state solution of the limiting problem and the regular part $H$ of the Green's function. More precisely, in this case, the characterization of the boundary behavior of the harmonic function $h_i=U_i-PU_i$ in section 2.2 becomes rougher and more difficult to control than that of $H$.

\end{Rem}

\begin{Rem}%%%%%%%%%%%%%%%%%%%%%%%%%%%%%% 变号解
The existence of sign-changing solutions can be derived from the proof of Theorem \ref{th1}.

There exists some $\epsilon_0>0$ such that, for each $\lambda_1,\ldots,\lambda_\kappa\in\{0,1\}$ and $\epsilon\in(0,\epsilon_0)$, problem \eqref{eqv} has a solution $(v_{1,\epsilon},v_{2,\epsilon})$
of the form
\begin{align}\label{constructv}
v_{1,\epsilon}=\sum_{i=1}^\kappa (-1)^{\lambda_i}U_i+o(1),\ \ \
v_{2,\epsilon}=\sum_{i=1}^\kappa  (-1)^{\lambda_i}V_i+o(1)
\end{align}
where $U_i=U_{\xi_{i,\epsilon},\delta_{i,\epsilon}},V_i=V_{\xi_{i,\epsilon},\delta_{i,\epsilon}}$ and
 $\epsilon^{-\frac{n-1}{n-2}}\delta_{i,\epsilon}\rightarrow \Lambda_i>0$, $\xi_{i,\epsilon}\rightarrow\tilde\xi_i\in\partial\Omega$ for $i=1,\ldots,\kappa$ as $\epsilon\rightarrow0$.

\end{Rem}

\medskip

Finally, Theorem \ref{th1} implies the following results back to the original problem \eqref{eq1}.

Precisely, given $k_1,\ldots,k_m\in\mathbb N$ with $k=k_1+\ldots+k_m\leq N-3$, for $\xi\in\R^{N-k},\delta>0$, we set $$\widetilde U_{\xi,\delta}(y^1,y^2,\ldots,y^m,z)=U_{\xi,\delta}(|y^1|,|y^2|,\ldots,|y^m|,z).$$
\begin{Thm}
There exists $\epsilon_0>0$ such that problem \eqref{eq1} has a solution $(u_{1,\epsilon},u_{2,\epsilon})\in W^{1,p^*}_0(D)\times W^{1,q^*}_0(D)$ of the form
\begin{align}\label{constructu}
u_{1,\epsilon}=\sum_{i=1}^\kappa \widetilde U_{\xi_{i,\epsilon},\delta_{i,\epsilon}}+o(1),\ \ \
u_{2,\epsilon}=\sum_{i=1}^\kappa \widetilde V_{\xi_{i,\epsilon},\delta_{i,\epsilon}}+o(1)
\end{align}
with
 $\epsilon^{-\frac{n-1}{n-2}}\delta_{i,\epsilon}\rightarrow \Lambda_i>0$, $\xi_{i,\epsilon}\rightarrow\tilde\xi_i\in\partial\Omega$ for $i=1,\ldots,\kappa$ as $\epsilon\rightarrow0$.

\end{Thm}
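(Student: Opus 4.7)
The plan is to deduce this theorem directly from Theorem \ref{th1} via the $\mathcal O$-equivariant reduction that produced the weighted system \eqref{eqv}. First, I would take the pair $(v_{1,\epsilon},v_{2,\epsilon})$ supplied by Theorem \ref{th1} and lift it as in \eqref{uv}, defining
$$u_{i,\epsilon}(y^1,\ldots,y^m,z):=v_{i,\epsilon}(|y^1|,\ldots,|y^m|,z),\qquad i=1,2.$$
The key identity is the spherical coarea/polar-coordinate formula: writing $\Phi(y,z):=(|y^1|,\ldots,|y^m|,z)$, one has for any continuous $f$ on $\overline\Omega$,
$$\int_D f(\Phi(y,z))\,dy\,dz=\Big(\prod_{i=1}^m|\mathbb S^{k_i}|\Big)\int_\Omega f(x)\,a(x)\,dx,$$
since $|y^i|^{k_i}$ is precisely the Jacobian factor from integration in $\R^{k_i+1}$, which reproduces the weight $a(x)=(x^1)^{k_1}\cdots(x^m)^{k_m}$.

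Next, I would verify that $(u_{1,\epsilon},u_{2,\epsilon})$ is a weak solution of \eqref{eq1} on $D$. Testing \eqref{eq1} against an arbitrary $\mathcal O$-invariant test function $\varphi=\tilde\varphi\circ\Phi$ and applying the above identity transforms the weak formulation on $D$ into the weighted weak formulation of \eqref{eqv} on $\Omega$, which is satisfied by construction. Because $D$ is $\mathcal O$-invariant, the principle of symmetric criticality (Palais) guarantees that testing against $\mathcal O$-invariant functions is sufficient, so $(u_{1,\epsilon},u_{2,\epsilon})$ is a genuine weak solution of the full problem \eqref{eq1}.

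The asymptotic expansion \eqref{constructu} then comes for free: by the very definition of $\widetilde U_{\xi,\delta}$ and $\widetilde V_{\xi,\delta}$, we have $\widetilde U_{\xi,\delta}=U_{\xi,\delta}\circ\Phi$ and $\widetilde V_{\xi,\delta}=V_{\xi,\delta}\circ\Phi$, so the expansion \eqref{constructv} of $(v_{1,\epsilon},v_{2,\epsilon})$ lifts term by term, and the concentration rates $\epsilon^{-(n-1)/(n-2)}\delta_{i,\epsilon}\to\Lambda_i$ and $\xi_{i,\epsilon}\to\tilde\xi_i$ are inherited directly. For the regularity assertion, the embeddings \eqref{emb} ensure $U_{\xi,\delta}\in\dot W^{1,p^*}(\R^n)$ and $V_{\xi,\delta}\in\dot W^{1,q^*}(\R^n)$; combined with the coarea identity above, the corresponding $W^{1,p^*}_0(D)$ and $W^{1,q^*}_0(D)$ norms of the lifts reduce to weighted Sobolev norms on $\Omega$ that are controlled by the ambient norms on $\R^n$. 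The boundary trace vanishes because $\Phi$ sends $\partial D$ (away from the coordinate hyperplanes $\{y^i=0\}$) into $\partial\Omega$, where $v_{i,\epsilon}=0$.

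The only mild technical point is to verify that the lift does not create spurious singularities along the hyperplanes $\{y^i=0\}$; however, by the assumption $\overline\Omega\subset\{x^i>0,\,i=1,\ldots,m\}$, the map $\Phi$ is smooth in a neighborhood of the supports of the concentrating bubbles, so no singular contribution appears in the differential calculus or in the boundary integration. Beyond this bookkeeping, the proof requires no further blow-up analysis, reduction, or ground-state estimate: the theorem is simply the invariant-lift reinterpretation of Theorem \ref{th1}, using exactly the correspondence set up at the beginning of Section 1.2.
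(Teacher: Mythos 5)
Your proposal is correct and coincides with the paper's own (implicit) argument: the paper obtains this theorem as a direct corollary of Theorem \ref{th1} through the correspondence \eqref{uv}, having already recorded in Section 1.2 that $(u_1,u_2)$ solves \eqref{eq1} if and only if the corresponding pair solves the weighted problem \eqref{eqv}, and your coarea/lift computation just fills in those routine details. One small caution: because \eqref{eq1} is supercritical in dimension $N$, the energy functional is not well defined (let alone $C^1$) on all of $W^{1,p^*}_0(D)\times W^{1,q^*}_0(D)$, so Palais' symmetric criticality does not literally apply; the step it was meant to justify is nevertheless immediate, either by averaging an arbitrary test function over the compact group $\mathcal O$ and using the invariance of $D$ and of $u_{i,\epsilon}$, or more simply by noting that $v_{i,\epsilon}$ are classical solutions of \eqref{eqv} (elliptic regularity, since $a$ is smooth and strictly positive on $\overline\Omega$), whence the lifted functions satisfy \eqref{eq1} pointwise.
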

\medskip

This paper is organized as follows. In section 2, we study the projection of the bubbles in two different cases: $p_0\in(\frac n{n-2},\frac{n+2}{n-2})$ and $p_0\in(p_n,\frac n{n-2})$ respectively.
The problem setting and function space $X_{p,q}$ are introduced in section 3,
where we give an equivalent form to \eqref{eq1} to carry out the reduction framework and define the approximate solutions.
In section 4, we perform the linear analysis and solve the auxiliary nonlinear problem, reducing the problem to finding a critical point of some function $J_\epsilon$,
which is called the reduced energy on a finite-dimensional set $\Gamma$.
Some basic estimates on the reduced energy are put in the appendix.

\medskip

\section{Projection of the bubbles}

Recall that the bubbles satisfy the following properties.

\begin{Lem}\label{lemasym}\cite{hulshof-vorst}
Assume that $p_0\leq\frac{n+2}{n-2}$. There exist some positive constants $a=a_{n,p_0}$ and $b=b_{n,p_0}$ depending only on $n$ and $p_0$ such that
\begin{align}\label{asymV}
&\lim_{r\rightarrow\infty}r^{n-2}V_{0,1}(r)=b_{n,p_0};
\end{align}
while
\begin{align}\label{asymU}
\begin{cases}
\dis \lim_{r\rightarrow\infty}r^{n-2}U_{0,1}(r)=a_{n,p_0}\ \ \ \ \ \ \ &\text{if}\  p_0>\frac n{n-2};\vspace{0.12cm}\\
\dis \lim_{r\rightarrow\infty}\dis\frac{r^{n-2}}{\log r}U_{0,1}(r)=a_{n,p_0}\ \ \ \ \ \ \ &\text{if}\ p_0=\frac n{n-2};\vspace{0.12cm}\\
\dis \lim_{r\rightarrow\infty}r^{(n-2)p_0-2}U_{0,1}(r)=a_{n,p_0}\ \ &\text{if}\ p_0<\frac n{n-2}.
\end{cases}
\end{align}
Furthermore, in the last case, we have $b_{n,p_0}^{p_0}=a_{n,p_0}((n-2)p_0-2)(n-(n-2)p_0)$.
\end{Lem}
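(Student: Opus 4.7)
The plan is to exploit the radial symmetry of $(U_{0,1},V_{0,1})$ to reduce the system to a coupled ODE, and then read off the three regimes of decay from integral representations combined with a bootstrap argument.

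Since the ground state is radial and decreasing (recalled above from Alvino--Lions--Trombetti), I work with $U(r),V(r)$ satisfying
\begin{align*}
-(r^{n-1}U')'=r^{n-1}V^{p_0},\qquad -(r^{n-1}V')'=r^{n-1}U^{q_0},
\end{align*}
with $U'(0)=V'(0)=0$ and $U,V\to 0$ at infinity (the latter following from the $\dot W^{2,\cdot}$ integrability together with standard elliptic decay for the two Poisson equations). Integrating twice and using that $r^{n-1}U'(r),r^{n-1}V'(r)\to 0$ at infinity yields
\begin{align*}
U(r)=\int_r^{\infty}t^{-(n-1)}\int_0^t s^{n-1}V^{p_0}(s)\,ds\,dt,
\end{align*}
and an analogous representation for $V$ in terms of $U^{q_0}$.

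I would first establish \eqref{asymV}. On the critical hyperbola with $p_0\le q_0$ one has $q_0\ge (n+2)/(n-2)>n/(n-2)$. Starting from a rough algebraic decay of $U$ (which can be extracted from Moser iteration or from the embedding $\dot W^{2,(q_0+1)/q_0}\hookrightarrow L^{p_0+1}$), one iteration of the integral formula for $V$ already yields $V\lesssim r^{-(n-2)}$, and plugging this back gives a decay rate $\alpha$ for $U$ with $\alpha q_0>n$, making $\int_0^{\infty}s^{n-1}U^{q_0}(s)\,ds$ finite. Dominated convergence in the formula for $V$ then forces
\begin{align*}
r^{n-2}V(r)\longrightarrow \frac{1}{n-2}\int_0^{\infty}s^{n-1}U^{q_0}(s)\,ds=:b_{n,p_0}>0.
\end{align*}
Feeding this sharp asymptotics of $V$ into the representation for $U$, the inner integral $\int_0^t s^{n-1}V^{p_0}(s)\,ds$ is eventually comparable to $\int^t s^{n-1-(n-2)p_0}\,ds$, and the large-$r$ behaviour of the iterated integral splits cleanly into three regimes according to the sign of $n-(n-2)p_0$: when $p_0>n/(n-2)$ the inner integral converges to a finite limit and $U\sim a_{n,p_0}\,r^{-(n-2)}$; at the threshold $p_0=n/(n-2)$ a logarithm is produced by $\int^t s^{-1}\,ds$, yielding $U\sim a_{n,p_0}\,r^{-(n-2)}\log r$; when $p_0<n/(n-2)$ the inner integral grows like $t^{\,n-(n-2)p_0}$, giving $U\sim a_{n,p_0}\,r^{-((n-2)p_0-2)}$. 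In each case the constant $a_{n,p_0}$ is given by an explicit (convergent or semi-explicit) integral expression in $V^{p_0}$.

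Finally, in the subthreshold case $p_0<n/(n-2)$, the identity $b_{n,p_0}^{p_0}=a_{n,p_0}((n-2)p_0-2)(n-(n-2)p_0)$ is a consistency check: inserting the ansatz $U(r)\sim a_{n,p_0}\,r^{-\gamma}$ with $\gamma=(n-2)p_0-2$ and $V(r)^{p_0}\sim b_{n,p_0}^{p_0}\,r^{-(n-2)p_0}$ into $-\Delta U=V^{p_0}$, and using $-\Delta(r^{-\gamma})=\gamma(n-2-\gamma)\,r^{-\gamma-2}$, the exponents match automatically ($-\gamma-2=-(n-2)p_0$), and equating coefficients forces precisely the stated relation. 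The main obstacle is the bootstrap step: closing the loop between the two decay rates rigorously, and in particular justifying the exact logarithmic correction at the threshold $p_0=n/(n-2)$, requires a careful iteration argument rather than the soft dominated-convergence argument that suffices in the other two regimes.
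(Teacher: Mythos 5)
The paper offers no proof of this lemma at all: it is quoted from \cite{hulshof-vorst}, where the sharp rates are obtained by an Emden--Fowler (logarithmic) change of variables that turns the radial system into an autonomous ODE system, followed by an analysis of the orbits near its equilibria. Measured against that, your integral-representation scheme is the right ``easy half'', but it has a genuine gap exactly where you locate the ``main obstacle'', and the obstacle is more serious than you suggest: the bootstrap you propose cannot be closed by iterating the two integral representations. To run dominated convergence for $V$ you need $\int_0^\infty s^{n-1}U^{q_0}(s)\,ds<\infty$, i.e.\ a decay rate $\alpha$ for $U$ with $\alpha q_0>n$. The only rates available a priori from radial monotonicity and the integrability $U\in L^{q_0+1}$, $V\in L^{p_0+1}$ (Moser iteration does no better) are the scaling rates $U(r)\lesssim r^{-n/(q_0+1)}$, $V(r)\lesssim r^{-n/(p_0+1)}$, and $n/(q_0+1)<n/q_0$, so they do not suffice; worse, on the critical hyperbola one has $\frac{np_0}{p_0+1}-2=\frac{n}{q_0+1}$ and $\frac{nq_0}{q_0+1}-2=\frac{n}{p_0+1}$, so feeding the scaling rates into your representations returns exactly the same rates. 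The pair of scaling exponents is a fixed point of your iteration map, and in particular ``one iteration of the integral formula for $V$'' does not yield $V\lesssim r^{-(n-2)}$. The substance of \cite{hulshof-vorst} is precisely to exclude this slow, self-similar-type decay for ground states, which needs an extra ingredient --- there, the dynamical-systems analysis after the Emden--Fowler substitution (which is also what produces the exact logarithmic factor at $p_0=\frac n{n-2}$), or at least a quantitative exploitation of $\int_{\mathbb R^n}U^{q_0+1}<\infty$ --- and your sketch supplies no such mechanism.

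Once a decay rate for $U$ strictly better than $n/q_0$ is secured, the remainder of your outline is sound: dominated convergence gives $b_{n,p_0}=\frac1{n-2}\int_0^\infty s^{n-1}U^{q_0}(s)\,ds$, the three regimes in \eqref{asymU} fall out of the iterated integral exactly as you describe, and the identity $b_{n,p_0}^{p_0}=a_{n,p_0}((n-2)p_0-2)(n-(n-2)p_0)$ need not be relegated to a formal consistency check: it follows rigorously from the same representation, since for $p_0<\frac n{n-2}$ the inner integral is asymptotic to $b_{n,p_0}^{p_0}\,t^{\,n-(n-2)p_0}/(n-(n-2)p_0)$ and the outer integration contributes the remaining factor $((n-2)p_0-2)^{-1}$.
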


\begin{Lem}\label{lemnonde}\cite{frank-kim-pistoia}
Set
$$
(\Psi_{0,1}^0,\Phi_{0,1}^0)=\Big(y\cdot\nabla U_{0,1}+\frac{n U_{0,1}}{q_0+1},y\cdot\nabla V_{0,1}+\frac{nV_{0,1}}{p_0+1}\Big)
$$
 and
$$(\Psi_{0,1}^l,\Phi_{0,1}^l)=(\partial_l U_{0,1},\partial_l V_{0,1}),\ \ for\ \ l=1,\ldots,n.$$
Then the space of solutions to the linear system
\begin{align}
\begin{cases}
&\displaystyle -\Delta\Psi=p_0V_{0,1}^{p_0-1}\Phi\ \ \text{in}\ \mathbb R^n,\vspace{0.12cm}\\
&\displaystyle -\Delta\Phi=q_0U_{0,1}^{q_0-1}\Psi\ \ \text{in}\ \mathbb R^n,\vspace{0.12cm}\\
&\displaystyle (\Psi,\Phi)\in  \dot{W}^{2,\frac{p_0+1}{p_0}}(\mathbb R^n)\times\dot{W}^{2,\frac{q_0+1}{q_0}}(\mathbb R^n)
\end{cases}
\end{align}
is spanned by
$$\Big\{(\Psi_{0,1}^0,\Phi_{0,1}^0),(\Psi_{0,1}^1,\Phi_{0,1}^1),\ldots,(\Psi_{0,1}^n,\Phi_{0,1}^n)\Big\}.$$
\end{Lem}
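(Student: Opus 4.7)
The argument has three ingredients: a spherical harmonic decomposition that reduces the linearized system to countably many radial ODE systems indexed by an angular mode $k\in\mathbb N$; the identification of explicit kernel elements in modes $k=0$ and $k=1$ from the dilation and translation invariances of \eqref{eqUV}; and the exclusion of nontrivial kernel elements in modes $k\geq 2$.

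First I would exploit the radial symmetry of $(U_{0,1},V_{0,1})$. Writing $y=r\theta$ with $r\geq 0$ and $\theta\in\mathbb S^{n-1}$, expand
\[
\Psi(y)=\sum_{k\geq 0}\psi_k(r)Y_k(\theta),\qquad \Phi(y)=\sum_{k\geq 0}\phi_k(r)Y_k(\theta),
\]
where $Y_k$ is a spherical harmonic of degree $k$ with eigenvalue $\mu_k=k(k+n-2)$ of $-\Delta_{\mathbb S^{n-1}}$. Since $V_{0,1}^{p_0-1}$ and $U_{0,1}^{q_0-1}$ are radial, the linearization decouples into the family
\[
\begin{cases}
-\psi_k''-\dfrac{n-1}{r}\psi_k'+\dfrac{\mu_k}{r^2}\psi_k=p_0V_{0,1}^{p_0-1}\phi_k,\\[4pt]
-\phi_k''-\dfrac{n-1}{r}\phi_k'+\dfrac{\mu_k}{r^2}\phi_k=q_0U_{0,1}^{q_0-1}\psi_k,
\end{cases}
\]
and the Sobolev condition translates into regularity at $r=0$ together with a sharp decay rate at $r=\infty$ depending on $p_0$.

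Next I would produce the lower bound. Differentiating the two-parameter family $(U_{\xi,\lambda},V_{\xi,\lambda})$ at $(\xi,\lambda)=(0,1)$ yields $n+1$ elements of the kernel: the dilation generator $(\Psi_{0,1}^0,\Phi_{0,1}^0)$, which is radial and lives in mode $k=0$, and the translations $(\Psi_{0,1}^l,\Phi_{0,1}^l)=(\partial_l U_{0,1},\partial_l V_{0,1})$ for $l=1,\ldots,n$, whose angular factor is $y_l/|y|$ and which therefore live in mode $k=1$. Membership in $\dot W^{2,(p_0+1)/p_0}(\mathbb R^n)\times\dot W^{2,(q_0+1)/q_0}(\mathbb R^n)$ is verified directly from the asymptotics of Lemma \ref{lemasym} together with their radially differentiated counterparts.

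The main obstacle, and the heart of the argument, is the upper bound: showing that in mode $k=0$ the admissible kernel is exactly one-dimensional, in mode $k=1$ it is exactly one-dimensional per spherical harmonic (giving the $n$-fold multiplicity), and in modes $k\geq 2$ it is trivial. For $k=0$, an Emden--Fowler substitution $r=e^t$ converts the coupled system into an autonomous Hamiltonian flow on the line, and one-dimensionality follows from the uniqueness (up to scaling) of the positive ground state \cite{hulshof-vorst,wang} by a Wronskian argument between regular-at-$0$ and decaying-at-$\infty$ solutions. For $k=1$, the translation solutions already saturate the one-dimensional admissible kernel; here the distinction between the ranges $p_0>n/(n-2)$ and $p_0<n/(n-2)$ in Lemma \ref{lemasym} must be used to verify the correct decay and thus the correct matching of initial/terminal behaviors. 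For $k\geq 2$, the strict monotonicity of the centrifugal term $\mu_k/r^2$, combined with a Sturm--Picone comparison against the mode-$1$ solutions which are known to have a definite sign structure, forces any putative kernel element to oscillate and thereby violate the regularity at $r=0$ or the integrability at $r=\infty$ demanded by the Sobolev space. The coupled nature of the system is what makes this final step delicate, since a direct scalar comparison is unavailable; one must run Sturm--Picone simultaneously on both components after diagonalizing the leading-order system at $0$ and $\infty$. Summing the modal dimensions yields exactly $n+1$, which matches the span claimed in the statement.
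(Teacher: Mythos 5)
The paper does not actually prove this lemma: it is quoted directly from Frank--Kim--Pistoia \cite{frank-kim-pistoia}, so there is no internal proof to compare your attempt with. At the level of architecture your outline follows the same broad strategy as that reference — spherical harmonic decomposition into radial mode-$k$ systems, explicit kernel elements in modes $k=0$ and $k=1$ generated by dilations and translations, and exclusion of modes $k\geq 2$ — so the plan is aligned with how the result is actually established.

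As a proof, however, it has genuine gaps exactly where all the work lies. For the coupled system, each mode-$k$ problem is a four-dimensional ODE system rather than a scalar Sturm--Liouville problem: a ``Wronskian argument'' between solutions regular at $0$ and decaying at $\infty$ requires counting the dimensions of those two solution subspaces and showing that their intersection is at most one-dimensional, which you do not do, and uniqueness of the ground state does not by itself deliver this. More seriously, Sturm--Picone comparison has no off-the-shelf analogue for strongly coupled Hamiltonian systems; ``diagonalizing the leading-order system at $0$ and $\infty$'' does not yield a comparison valid on all of $(0,\infty)$, and the definite sign structure of the mode-$1$ solutions $(-\partial_r U_{0,1},-\partial_r V_{0,1})$ that your comparison leans on must itself be proved. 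Likewise, the verification that the admissible decay in each mode is compatible with the Sobolev condition — delicate precisely because of the slower decay of $U_{0,1}$ when $p_0<\frac{n}{n-2}$ recorded in Lemma \ref{lemasym} — is asserted rather than carried out. These steps are the actual content of \cite{frank-kim-pistoia}; as written, your proposal is a plausible roadmap to that proof, not a proof.
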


\medskip

Consider the solution of the form
\eqref{constructv}. Given $\kappa\in\mathbb N$ and for $i=1,\ldots,\kappa$,  we set
\begin{align}\label{set}
\begin{split}
&\delta_{i,\epsilon}=\begin{cases}\dis\epsilon^{\frac{n-1}{n-2}}\Lambda_i\ \ \ &if\  p_0>\frac n{n-2}\\
\dis\epsilon^{\frac{(n-2)p_0-1}{(n-2)p_0-2}}\Lambda_i\ \ \ &if\  p_0<\frac n{n-2}
\end{cases},\ \ \ \Lambda_i>0,\\
&\xi_{i,\epsilon}=\xi_i+\eta_i\nu(\xi_i),\ \ \ \xi_i\in\partial\Omega,\ \ \ \eta_i=\epsilon t_i.
\end{split}\end{align}
For simplicity, we denote
$$\vec{\xi}=(\xi_1,\ldots,\xi_\kappa)\in(\partial\Omega)^\kappa,\ \ \vec\Lambda=(\Lambda_1,\ldots,\Lambda_\kappa)\in(0,+\infty)^\kappa,\ \ \vec t=(t_1,\ldots,t_\kappa)\in(0,+\infty)^\kappa$$ and introduce a configuration space $\Gamma$ as the set of the concentration points and the concentration parameters as follows:
$$\Gamma:=\{(\vec\xi,\vec\Lambda,\vec t)\in(\partial\Omega)^\kappa\times(0,+\infty)^\kappa\times(0,+\infty)^\kappa:\xi_i\neq\xi_j\ if\ i\neq j,\ i,j=1,\dots,\kappa\}.$$

Given $(\vec\xi,\vec\Lambda,\vec t)\in\Gamma$, denote $$U_i=U_{\xi_{i,\epsilon},\delta_{i,\epsilon}},\  \ V_i=V_{\xi_{i,\epsilon},\delta_{i,\epsilon}},\ i=1,\ldots,\kappa.$$
For $i=1,\ldots,\kappa$, let $(PU_i,PV_i)$ be the unique smooth solution of the system
\begin{align}
\begin{cases}
-\Delta PU_i=V_i^{p_0}\ \ &in\ \Omega\\
-\Delta PV_i=U_i^{q_0}\ \ &in\ \Omega\\
PU_i=PV_i=0\ \ &on\ \partial\Omega.
\end{cases}
\end{align}
\medskip

\subsection{Projection for $p_0\in(\frac{n}{n-2},\frac{n+2}{n-2})$}
First recall the properties of the Green's function and its regular part.

Let $G=G_\Omega$ be the Green's function of the Laplacian $-\Delta$ in $\Omega$ with respect to the Dirichlet boundary condition, and $H=H_\Omega: \Omega\times
\Omega\rightarrow\mathbb R$ be its regular part.  For each $y\in\Omega$,
\begin{align*}
\begin{cases}
-\Delta_xH(x,y)=0 &x\in\Omega,\\
H(x,y)=\frac{\gamma_n}{|x-y|^{n-2}} &x\in\partial\Omega,
\end{cases}
\end{align*}where $\gamma_n:=\frac1{(n-2)|\mathbb S^{n-1}|}$.
 Then, $0<G(x,y)=G(y,x)=\frac{\gamma_n}{|x-y|^{n-2}}-H(x,y)<\frac{\gamma_n}{|x-y|^{n-2}}$  for $(x,y)\in\Omega\times\Omega,x\neq y$.

 We also need a precise behavior of $H(x,y)$ when $x$ and $y$ are close to the boundary. For this purpose, given $\eta>0$, let $\Omega_\eta:=\{x\in\Omega:dist(x,\partial\Omega)\leq\eta\}$. When $\eta$ is small enough, the orthogonal projection $p:\Omega_{2\eta}\rightarrow\partial\Omega$
 onto the boundary is well defined. For any $x\in\Omega_{2\eta}$ there exists a unique point $p(x)\in\partial\Omega$ with $d(x):=dist(x,\partial\Omega)=|p(x)-x|$.
 Let $\nu(x)$ denote the inward normal to $\partial\Omega$ at $x$. For $x\in\Omega_{2\eta}$, we define $\bar x:=p(x)-d(x)\nu(x)=x-2d(x)\nu(x)$, which is the reflection of $x$
 on $\partial\Omega$. The following known results are obtained in \cite{ackermann-clapp-pistoia}.
 \begin{Lem}\label{lemH}\cite{ackermann-clapp-pistoia}
 There exists $C>0$ such that for all $x\in\Omega_\eta$ and $y\in\Omega$, there hold that
 \begin{align*}
 &\Big|H(x,y)-\frac{\gamma_n}{|\bar x-y|^{n-2}}\Big|\leq\frac{Cd(x)}{|\bar x-y|^{n-2}},\\
 &\Big|\nabla_x\Big(H(x,y)-\frac{\gamma_n}{|\bar x-y|^{n-2}}\Big)\Big|\leq\frac{C}{|\bar x-y|^{n-2}}.\\
 %&\Big|\nabla_yH(x,y)+\frac{\gamma_n(n-2)(y-\bar x)}{|y-\bar x|^{n}}\Big|\leq\frac{Cd(x)}{d(y)|y-\bar x|^{n-2}},\\
 %&\nu_x\cdot\nabla_x H(x,y)|_{y=x}\geq Cd(x)^{1-n},\\
 %&\Big|\nabla_x\nabla_yH(x,y)+\gamma_n(n-2)\nabla_x\Big(\frac{y-\bar x}{|y-\bar x|^{n}}\Big)\Big|\leq C\Big(\frac{1}{d(y)|y-\bar x|^{N-2}}+\frac{1}{|y-\bar x|^{n-1}}\Big).
 \end{align*}
 In particular,
  \begin{align*}
 &0\leq H(x,y)\leq\frac{C}{|\bar x-y|^{n-2}},\ \ x\in\Omega_\eta,\ y\in\Omega,\ \ \
 \Big|\nabla_xH(x,y)\Big|\leq\frac{C}{|x-y|^{n-1}},\ \ x,y\in\Omega.
 \end{align*}

 \end{Lem}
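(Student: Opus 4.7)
The plan is to compare $H(\cdot,y)$ with the explicit reflection function $R(x,y):=\gamma_n|\bar x-y|^{2-n}$, which is precisely the regular part of the Green function in the half-space model obtained by ``freezing'' $\partial\Omega$ at $p(x)$. Setting $w(x,y):=H(x,y)-R(x,y)$, I observe that $\bar x=x$ whenever $x\in\partial\Omega$, so $w(\cdot,y)\equiv 0$ on $\partial\Omega\cap\overline\Omega_{2\eta}$; moreover, since $H(\cdot,y)$ is harmonic, $w$ satisfies the Poisson equation $-\Delta_xw=\Delta_xR$ in $\Omega_{2\eta}$. The argument then reduces to (i) controlling $\Delta_xR$, (ii) a barrier argument for $w$, and (iii) a standard interior gradient estimate.

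For step (i) I would write $\bar x=\Phi(x)$ with $\Phi(x)=x-2d(x)\nu(x)$. On $\partial\Omega$, a direct computation using $\nabla d=\nu$ shows $D\Phi=I-2\nu\otimes\nu$, which is \emph{exactly} a Euclidean reflection. By smoothness of $\partial\Omega$ the quantities $(D\Phi)^{\top}D\Phi-I$ and $\Delta\Phi$ are therefore both $O(d(x))$ in $\Omega_{2\eta}$. Combining this with the chain rule and the harmonicity of $z\mapsto|z-y|^{2-n}$ yields
$$
|\Delta_xR(x,y)|\leq\frac{Cd(x)}{|\bar x-y|^n}+\frac{C}{|\bar x-y|^{n-1}}\leq\frac{C}{|\bar x-y|^{n-1}},
$$
the last step using $d(x)\leq|\bar x-y|$. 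For step (ii) the candidate super-solution is $\Psi(x):=Ad(x)|\bar x-y|^{2-n}$; using $|\nabla d|=1$ and $\Delta d$ bounded, a short calculation gives $-\Delta_x\Psi\geq c\,|\bar x-y|^{1-n}$ near $\partial\Omega$ when $A$ is chosen large. On $\partial\Omega$ both $\Psi$ and $w$ vanish; on the inner boundary $\{d(x)=\eta\}$, splitting into the regimes $|x-y|\lesssim\eta$ and $|x-y|\gtrsim\eta$ and invoking the elementary bound $H\leq C|\bar x-y|^{2-n}$ shows $\Psi\geq|w|$ after enlarging $A$. The maximum principle in $\Omega_\eta$ then delivers the first inequality.

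For step (iii) I apply interior gradient estimates to the Poisson equation for $w$ on the ball $B:=B(x,d(x)/2)\subset\Omega$: by scaling,
$$
|\nabla_xw(x,y)|\leq\frac{C}{d(x)}\|w\|_{L^\infty(B)}+Cd(x)\|\Delta_xR\|_{L^\infty(B)}\leq\frac{C}{|\bar x-y|^{n-2}},
$$
using the $L^\infty$ bound from (ii) and the Laplacian bound from (i). The two ``in particular'' statements then follow by the triangle inequality combined with the classical Gr\"uter--Widman estimate $|\nabla_xG(x,y)|\leq C|x-y|^{1-n}$.

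\textbf{Main obstacle.} The decisive point is the sharp control of $\Delta_xR$ in (i): one has to exploit the fact that $D\Phi$ is \emph{exactly} an isometry on $\partial\Omega$ (not merely approximately so) in order to kill the leading $|\bar x-y|^{-n}$ term that a naive expansion would produce. A secondary subtlety arises on the inner boundary $\{d(x)=\eta\}$: there $R$ need not be small, so the super-solution $\Psi$ must be shown to dominate $|w|$ by combining the Dirichlet condition of $H$, the unweighted boundary bound $H\leq C|\bar x-y|^{2-n}$, and a case split based on the location of $y$ relative to $x$. Once these two points are settled, the rest of the argument is routine elliptic theory.
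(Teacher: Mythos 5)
The paper itself offers no proof of this lemma (it is simply quoted from \cite{ackermann-clapp-pistoia}), so your argument has to stand on its own, and it has a genuine gap at step (ii). The function $\Psi(x)=A\,d(x)\,|\bar x-y|^{2-n}$ does not satisfy the differential inequality you claim. Writing $g(x)=|\bar x-y|^{2-n}=f(\Phi(x))$ and using $D\Phi\,\nabla d=-\nabla d$ and $D^2d\,\nabla d=0$, one finds $-\Delta\Psi=-A\bigl[\,2(n-2)\,\tfrac{(\bar x-y)\cdot\nu(x)}{|\bar x-y|^{n}}+g\,\Delta d+d\,\Delta g\,\bigr]$. The signs of the first two terms are dictated by the geometry, not by $A$: for a non-convex domain (precisely the annular domains of Figure 1 that motivate the paper) with $y$ on the far side of the hole one has $(\bar x-y)\cdot\nu\sim +|\bar x-y|$, so $-\Delta\Psi<0$ there; and even in the favourable (convex or flat) case the only sign-definite positive contribution is of order $A\,d(x)\,|\bar x-y|^{-n}$, which cannot dominate the part of $|\Delta_xR|$ of size $|\bar x-y|^{1-n}$ when $d(x)\ll|\bar x-y|$. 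That part of $\Delta_xR$ is genuinely present, because your subsidiary claim $\Delta\Phi=O(d(x))$ is false: $\Delta\Phi=-2\Delta d\,\nabla d-2d\,\nabla\Delta d$, whose first term is $O(1)$ (proportional to the mean curvature of $\partial\Omega$), so $\nabla f(\Phi)\cdot\Delta\Phi=O(|\bar x-y|^{1-n})$ and no better. Enlarging $A$ does not help, since every term in $-\Delta\Psi$ scales linearly in $A$; the constant $A$ only fixes the comparison on $\{d=\eta\}$, which is indeed the easy part. Consequently the maximum-principle step fails, and with it the $L^\infty$ bound on which your step (iii) and the ``in particular'' statements rely.

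The estimate itself is correct, and your step (i) bound $|\Delta_xR|\le C\bigl(d(x)|\bar x-y|^{-n}+|\bar x-y|^{1-n}\bigr)$ is the right starting point; what must replace the barrier is a representation argument on the collar, which is essentially how the quoted reference proceeds. Split $w=H-R=w_1+w_2$ on $\Omega_{2\eta}$, where $w_1(x)=-\int_{\Omega_{2\eta}}G_{\Omega_{2\eta}}(x,z)\,\Delta_zR(z,y)\,dz$ and $w_2$ is harmonic with $w_2=0$ on $\partial\Omega$ and the bounded data of $w$ on $\{d=2\eta\}$. Using the standard Green's function bound $G_{\Omega_{2\eta}}(x,z)\le C\,d(x)\,|x-z|^{1-n}$, the bi-Lipschitz character of $z\mapsto\bar z$ on the collar (so that $|x-z|\gtrsim|\bar x-\bar z|$ and $|\bar z-y|\ge d(z)$), and the convolution inequality $\int|\bar x-\zeta|^{1-n}|\zeta-y|^{1-n}\,d\zeta\le C|\bar x-y|^{2-n}$, one gets $|w_1|\le C\,d(x)\,|\bar x-y|^{2-n}$; a boundary Lipschitz (Hopf-type) estimate gives $|w_2|\le C\,d(x)\le C\,d(x)\,|\bar x-y|^{2-n}$ because $|\bar x-y|$ is bounded above. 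With this $L^\infty$ bound in hand, your interior gradient estimate on $B(x,d(x)/2)$ and the deduction of the two ``in particular'' inequalities (the second via Gr\"uter--Widman) go through exactly as you wrote them.
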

\medskip

 A standard comparison argument based on Lemma \ref{lemasym} yields that
 \begin{Lem}\label{lemP}
 There exists $c>0$ such that for all $x\in\Omega$,
  \begin{align}\label{0}
  \begin{split}
 &0\leq PU_{\xi,\delta}\leq U_{\xi,\delta},\ \ \ 0\leq PV_{\xi,\delta}\leq V_{\xi,\delta},\\
 &0\leq U_{\xi,\delta}- PU_{\xi,\delta}\leq \frac{a_{n,p_0}}{\gamma_n}\delta^{\frac{n}{p_0+1}}H(x,\xi)\leq\frac{c_1\delta^{\frac{n}{p_0+1}}}{|x-\bar\xi|^{n-2}},\\
  &0\leq V_{\xi,\delta}- PV_{\xi,\delta}\leq\frac{b_{n,p_0}}{\gamma_n}\delta^{\frac{n}{q_0+1}}H(x,\xi)\leq\frac{c_2\delta^{\frac{n}{q_0+1}}}{|x-\bar\xi|^{n-2}}.
 \end{split}\end{align}

 Moreover, there hold that
 \begin{align*}&R_{1,\xi,\delta}(x):=PU_{\xi,\delta}-U_{\xi,\delta}+\frac{a_{n,p_0}}{\gamma_n}\delta^{\frac{n}{p_0+1}}H(x,\xi),\\
 &R_{2,\xi,\delta}(x):=PV_{\xi,\delta}-V_{\xi,\delta}+\frac{b_{n,p_0}}{\gamma_n}\delta^{\frac{n}{q_0+1}}H(x,\xi)
 \end{align*}
 satisfies  \begin{align}\label{estR}
  \begin{split}\|R_{1,\xi,\delta}\|_{L^\infty(\Omega)}=O(\frac{\delta^{\frac{n}{p_0+1}+1}}{d(\xi)^{n-1}}),\ \ \
 \|R_{2,\xi,\delta}\|_{L^\infty(\Omega)}=O(\frac{\delta^{\frac{n}{q_0+1}+1}}{d(\xi)^{n-1}}).\end{split}\end{align}
 \end{Lem}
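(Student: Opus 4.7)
My plan is to combine a direct maximum principle argument with the sharp asymptotic expansion of Lemma \ref{lemasym}, applied to the harmonic error function $R_{1,\xi,\delta}$. First, the basic inequalities in the first line of \eqref{0} come for free: $PU_{\xi,\delta}$ solves $-\Delta PU_{\xi,\delta} = V_{\xi,\delta}^{p_0} \geq 0$ with zero Dirichlet data, so $PU_{\xi,\delta} \geq 0$ by the weak maximum principle, while $U_{\xi,\delta}$ itself satisfies the same elliptic equation on all of $\R^n$, so the difference $U_{\xi,\delta} - PU_{\xi,\delta}$ is harmonic in $\Omega$ with nonnegative trace on $\partial\Omega$, giving $PU_{\xi,\delta} \leq U_{\xi,\delta}$. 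The same argument handles $V_{\xi,\delta}$ and $PV_{\xi,\delta}$.

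For the refined bounds involving $H$, I would examine the harmonic function $R_{1,\xi,\delta}$ directly. On $\partial\Omega$, using $PU_{\xi,\delta} = 0$ and $H(x,\xi) = \gamma_n|x-\xi|^{-(n-2)}$, its trace reduces to
\[
R_{1,\xi,\delta}(x)\big|_{\partial\Omega} = \frac{a_{n,p_0}\,\delta^{n/(p_0+1)}}{|x-\xi|^{n-2}} - U_{\xi,\delta}(x).
\]
In the regime $p_0 > n/(n-2)$, a standard Emden--Fowler sharpening of Lemma \ref{lemasym} yields $U_{0,1}(r) = a_{n,p_0}r^{-(n-2)} + O(r^{-(n-1)})$ as $r \to \infty$. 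Rescaling, and using that $|x-\xi|/\delta \to \infty$ uniformly on $\partial\Omega$ (since $\delta \to 0$ while $|x-\xi| \geq d(\xi) > 0$), gives $U_{\xi,\delta}(x) = a_{n,p_0}\delta^{n/(p_0+1)}|x-\xi|^{-(n-2)} + O(\delta^{n/(p_0+1)+1}d(\xi)^{-(n-1)})$ on $\partial\Omega$. Paired with the one-sided bound $U_{0,1}(r) \leq a_{n,p_0}r^{-(n-2)}$, valid globally by a barrier comparison on $\R^n\setminus\{0\}$ exploiting $-\Delta U_{0,1} = V_{0,1}^{p_0} \geq 0$ and the limit at infinity, this forces $R_{1,\xi,\delta} \geq 0$ on $\partial\Omega$ with magnitude $O(\delta^{n/(p_0+1)+1}/d(\xi)^{n-1})$. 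Because $R_{1,\xi,\delta}$ is harmonic in $\Omega$, the weak maximum principle propagates both the sign and the size to all of $\Omega$, yielding simultaneously the inequality $U_{\xi,\delta}-PU_{\xi,\delta} \leq (a_{n,p_0}/\gamma_n)\delta^{n/(p_0+1)}H(\cdot,\xi)$ in \eqref{0} and the remainder estimate \eqref{estR}.

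The final pointwise bound by $c_1\delta^{n/(p_0+1)}/|x-\bar\xi|^{n-2}$ then follows from Lemma \ref{lemH}, applied after invoking the symmetry $H(x,\xi)=H(\xi,x)$, since $\xi$ (not $x$) is the point close to $\partial\Omega$ in the relevant concentration regime. The argument for $V_{\xi,\delta}$, $PV_{\xi,\delta}$, and $R_{2,\xi,\delta}$ is entirely parallel, based on the asymptotic $r^{n-2}V_{0,1}(r) \to b_{n,p_0}$ from Lemma \ref{lemasym}, which holds in all regimes of $p_0$ and hence does not require a case distinction at this stage.

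The principal technical hurdle is refining Lemma \ref{lemasym} from mere existence of the limiting constants $a_{n,p_0}$, $b_{n,p_0}$ to a quantitative one-sided expansion with explicit next-order remainder, together with the global pointwise majorant $r^{n-2}U_{0,1}(r) \leq a_{n,p_0}$. Both refinements come from a careful analysis of the radial Emden--Fowler ODE satisfied by $(U_{0,1},V_{0,1})$ and are what ultimately produce the sharp power $\delta^{n/(p_0+1)+1}$ and the $d(\xi)^{-(n-1)}$ decay appearing in \eqref{estR}; once they are in hand, the remainder of the proof is a routine application of the maximum principle for harmonic functions.
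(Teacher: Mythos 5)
Your argument is correct in substance and follows essentially the same route as the paper: the paper's proof likewise reduces the lemma, via the maximum principle and Lemma \ref{lemH}, to the sharp expansion $\big|U_{0,1}(r)-a_{n,p_0}r^{-(n-2)}\big|=O(r^{-(n-1)})$ (and its analogue for $V_{0,1}$), which it simply quotes from \cite{kim-moon}, whereas you propose to rederive it from the radial Emden--Fowler system; that is the only genuine difference, and it buys self-containedness at the cost of the extra ODE work. One caveat: your justification of the one-sided bound $U_{0,1}(r)\le a_{n,p_0}r^{-(n-2)}$ (which is what gives the exact-constant middle inequality in \eqref{0}) by a ``barrier comparison exploiting $-\Delta U_{0,1}=V_{0,1}^{p_0}\ge0$ and the limit at infinity'' does not work as stated: superharmonicity together with decay at infinity yields \emph{lower} bounds through the minimum principle, not upper bounds. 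The bound itself is true and easily repaired: since $p_0>\frac n{n-2}$ one has $V_{0,1}^{p_0}\in L^1(\R^n)$ and $U_{0,1}=\gamma_n|\cdot|^{2-n}\ast V_{0,1}^{p_0}$, so Newton's theorem for the radial density shows $r^{n-2}U_{0,1}(r)$ is nondecreasing with limit $a_{n,p_0}=\gamma_n\int_{\R^n}V_{0,1}^{p_0}$. With that fix, the remaining steps (harmonicity of $R_{1,\xi,\delta}$, propagation of the boundary estimates by the maximum principle, and the symmetry $H(x,\xi)=H(\xi,x)$ combined with Lemma \ref{lemH} to pass to $|x-\bar\xi|^{-(n-2)}$) coincide with the paper's argument.
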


\begin{proof}
By use of the maximum principle and Lemma \ref{lemH}, it is suffices to show that
\begin{align}\label{Uasym}
  \begin{split}
  \left| U(x)-\frac{a_{n,p_0}}{|x|^{n-2}} \right|=O\Big(\frac1{|x|^{n-1}}\Big),\ \ \
  \left| V(x)-\frac{b_{n,p_0}}{|x|^{n-2}} \right|=O\Big(\frac1{|x|^{n-1}}\Big),
   \end{split}
\end{align}
which can be found in \cite{kim-moon}.
\iffalse
%We only prove \eqref{Uasym} and omit the similar proof of \eqref{Vasym} here.
In fact, in view of \eqref{asymU},
 the Kelvin transform $U^*$ of $U$ satisfies that
 \begin{align*}
 U^*(0)=\lim_{|x|\rightarrow0}|x|^{n-2}U(x)=a_{n,p_0},\ \
 V^*(0)=\lim_{|x|\rightarrow0}|x|^{n-2}V(x)=b_{n,p_0}.
 \end{align*}
 Moreover, we have the equations for $x\in\R^n$ near $0$:
 \begin{align}\label{U*}
 &-\Delta U^*(x)=\frac1{|x|^{n+2}}(-\Delta U)(\frac x{|x|^2})=\frac1{|x|^{n+2}}V^{p_0}(\frac x{|x|^2})
 =O\Big(\frac1{|x|^{n+2-p_0(n-2)}}\Big),
  \end{align}
  and
   \begin{align}\label{V*}
 & -\Delta V^*(x)=\frac1{|x|^{n+2}}(-\Delta V)(\frac x{|x|^2})=\frac1{|x|^{n+2}}U^{q_0}(\frac x{|x|^2})
 =O\Big(|x|^{q_0(n-2)-(n+2)}\Big).
 \end{align}

First for \eqref{V*}, since $q_0>\frac{n+2}{n-2}$, the elliptic regularity that $V^*\in C^{1,\sigma}(B_2(0))$ for any $\sigma\in(0,1)$.
For \eqref{U*}, however, since  $p_0<\frac{n+2}{n-2}$, the proof is not valid. In this case, if $p_0>\frac{n+1}{n-2}$, then $n+2-p_0(n-2)<1$, and so $-\Delta U^*(x)\in L^{\tilde q}(B_2(0))$
with any $\tilde q(n+2-p_0(n-2))<n$. Taking $\tilde q>\frac n2$ and close to $\frac n{n+2-p_0(n-2)}$, we have then $U^*\in C^{1,\sigma}(B_2(0))$ with  $1+\sigma=2-\frac n{\tilde q}$, close to $p_0(n-2)-n>1$. Then, we also have  $V^*\in C^{1,\sigma}(B_2(0))$ with some $\sigma>0$ small.
Therefore, there hold that
\begin{align*}
|V^*(x)-b_{n,p_0}|=|V^*(x)-V^*(0)|\leq c|x|,\ \ |U^*(x)-a_{n,p_0}|=|U^*(x)-U^*(0)|\leq c|x|,\ \ in\ B_2(0).
\end{align*}

Then, rewriting the inequalities in terms of $U$ and $V$ and by rescaling, we obtain \eqref{estR} easily.
\fi

\end{proof}

\medskip

 \begin{Lem}\label{lemP'}
There exists $\sigma>0$ such that
\begin{align}\label{1}
\int_\Omega|\nabla PU_i|PV_j=O\left(\epsilon^{1+\sigma}\right),\ \ \int_\Omega|\nabla PV_i|PU_j=O\left(\epsilon^{1+\sigma}\right),
\end{align}
\begin{align}\label{2}
\|\nabla PU_i\|_{L^{\frac{p_0+1}{p_0}}(\Omega)}=O\left(\epsilon^{1+\sigma}\right),\ \
\|\nabla PV_i\|_{L^{\frac{q_0+1}{q_0}}(\Omega)}=O\left(\epsilon^{1+\sigma}\right).
\end{align}

 \end{Lem}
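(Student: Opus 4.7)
I would first observe that \eqref{1} reduces to \eqref{2} via H\"older's inequality, and then establish \eqref{2} by splitting $PU_i=U_i-h_i$ with $h_i:=U_i-PU_i$ harmonic in $\Omega$. Applying H\"older with the conjugate pair $\tfrac{p_0+1}{p_0}$ and $p_0+1$,
\begin{align*}
\int_\Omega|\nabla PU_i|\,PV_j\,dx\;\leq\;\|\nabla PU_i\|_{L^{(p_0+1)/p_0}(\Omega)}\,\|PV_j\|_{L^{p_0+1}(\Omega)}.
\end{align*}
Lemma~\ref{lemP} gives $0\leq PV_j\leq V_j$, and the change of variable $z=(y-\xi_{j,\epsilon})/\delta_{j,\epsilon}$ combined with the critical identity $\tfrac{n}{p_0+1}+\tfrac{n}{q_0+1}=n-2$ shows that $\|V_j\|_{L^{p_0+1}(\R^n)}$ is scale-invariant and hence $O(1)$. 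The second bound in \eqref{1} is treated identically with the pair $\tfrac{q_0+1}{q_0},\,q_0+1$, so the lemma reduces to \eqref{2}.

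For \eqref{2}, the triangle inequality yields
$$\|\nabla PU_i\|_{L^{(p_0+1)/p_0}(\Omega)}\;\leq\;\|\nabla U_i\|_{L^{(p_0+1)/p_0}(\Omega)}+\|\nabla h_i\|_{L^{(p_0+1)/p_0}(\Omega)}.$$
For the first term I would rescale $z=(y-\xi_{i,\epsilon})/\delta_{i,\epsilon}$ and use the decay $|\nabla U_{0,1}(z)|\lesssim(1+|z|)^{-(n-1)}$ obtained from Lemma~\ref{lemasym} (valid since $p_0>\tfrac{n}{n-2}$). A direct exponent count, using $\tfrac{n}{p_0+1}+\tfrac{n}{q_0+1}=n-2$, gives
\begin{align*}
\|\nabla U_i\|_{L^{(p_0+1)/p_0}(\Omega)}=O(\delta_{i,\epsilon})=O\bigl(\epsilon^{(n-1)/(n-2)}\bigr),
\end{align*}
provided $\nabla U_{0,1}\in L^{(p_0+1)/p_0}(\R^n)$, which holds automatically for $p_0<n-1$ (and this is ensured by $p_0<\tfrac{n+2}{n-2}$ whenever $n\geq 4$); borderline dimensions can be absorbed by truncating the scaled domain at $|z|\sim 1/\delta_{i,\epsilon}$, at worst producing a logarithmic loss. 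Either way the bound is $O(\epsilon^{1+\sigma_1})$ with $\sigma_1>0$.

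For the harmonic piece, Lemma~\ref{lemP} supplies
\begin{align*}
h_i=\frac{a_{n,p_0}}{\gamma_n}\,\delta_{i,\epsilon}^{n/(p_0+1)}\,H(\,\cdot\,,\xi_{i,\epsilon})\;-\;R_{1,\xi_{i,\epsilon},\delta_{i,\epsilon}},
\end{align*}
with $\|R_1\|_{L^\infty(\Omega)}=O\bigl(\delta^{n/(p_0+1)+1}/d(\xi_{i,\epsilon})^{n-1}\bigr)$. Since $d(\xi_{i,\epsilon})\sim\epsilon$ and $\delta_{i,\epsilon}\sim\epsilon^{(n-1)/(n-2)}$, the prefactor $\delta_{i,\epsilon}^{n/(p_0+1)}$ is of order $\epsilon^{n(n-1)/((p_0+1)(n-2))}$, an exponent strictly greater than $1$ throughout $p_0\in(\tfrac n{n-2},\tfrac{n+2}{n-2})$ when $n\geq 4$. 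The $L^{(p_0+1)/p_0}$-norm of the main term $\delta^{n/(p_0+1)}\nabla H(\,\cdot\,,\xi_{i,\epsilon})$ is then controlled by a region split: on a compact subset of the interior, $H(\,\cdot\,,\xi_{i,\epsilon})$ is smooth so $|\nabla H|$ is bounded; in the near-boundary layer the effective singular point is $\bar\xi_{i,\epsilon}$, which lies outside $\Omega$ at distance $\sim\epsilon$, so one can integrate using $|x-\bar\xi_{i,\epsilon}|\gtrsim\epsilon$ together with the bound of Lemma~\ref{lemH}. The remainder $R_1$ enters in lower order via its uniform bound and Sobolev interpolation. Combining these pieces yields $\|\nabla h_i\|_{L^{(p_0+1)/p_0}}=O(\epsilon^{1+\sigma_2})$, and thence \eqref{2} with $\sigma=\min(\sigma_1,\sigma_2)>0$. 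The $\nabla PV_i$ estimate follows symmetrically by swapping $(U,V)$ and $(p_0,q_0)$.

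The main technical obstacle is precisely the $L^{(p_0+1)/p_0}$-bound on $\nabla h_i$: the estimate $|\nabla_xH(x,y)|\leq C|x-y|^{1-n}$ in Lemma~\ref{lemH} has an apparent interior singularity at $x=\xi_{i,\epsilon}$ that would make a naive integration divergent for many admissible $p_0$. The way out is that $H(\,\cdot\,,\xi_{i,\epsilon})$ is actually smooth on compact subsets of $\Omega$, so this singularity is only an artifact of the global estimate; the genuine large-gradient region of $h_i$ is the near-boundary piece adjacent to the reflected point $\bar\xi_{i,\epsilon}$ outside $\Omega$. Carrying out the region-by-region bookkeeping with this in mind is the delicate part of the argument.
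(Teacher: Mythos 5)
Your reduction of the two integral estimates to the two gradient-norm estimates via H\"older, and the scaling bound $\|\nabla U_i\|_{L^{(p_0+1)/p_0}(\Omega)}=O(\delta_{i,\epsilon})$, are sound (one caveat: for $n=3$ the range \eqref{P} forces $p_0>n-1$, so truncating at $|z|\sim1/\delta_{i,\epsilon}$ costs a polynomial, not merely logarithmic, factor -- the bound degrades to $\delta_{i,\epsilon}^{n/(p_0+1)}$, which is still $O(\epsilon^{1+\sigma})$ since $p_0<\tfrac{n+2}{n-2}$). The genuine gap is in your treatment of the harmonic piece $h_i=U_i-PU_i$. You split $h_i=\tfrac{a_{n,p_0}}{\gamma_n}\delta_{i,\epsilon}^{n/(p_0+1)}H(\cdot,\xi_{i,\epsilon})-R_{1,\xi_{i,\epsilon},\delta_{i,\epsilon}}$ and assert that $R_1$ ``enters in lower order via its uniform bound and Sobolev interpolation''. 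But Lemma \ref{lemP} only provides $\|R_1\|_{L^\infty(\Omega)}$, and an $L^\infty$ bound on a function harmonic in $\Omega$ gives no control of its gradient in $L^{(p_0+1)/p_0}(\Omega)$ up to the boundary: interior estimates only yield $|\nabla R_1(x)|\leq C\|R_1\|_{L^\infty}/d(x,\partial\Omega)$, and $d(\cdot,\partial\Omega)^{-1}\notin L^{t}(\Omega)$ for any $t\geq1$. Closing this step would require gradient information on the boundary data of $R_1$ (a $C^1$ analogue of \eqref{Uasym} on $\partial\Omega$) together with elliptic estimates up to the boundary, none of which you supply; similarly, your bound for the main term $\delta^{n/(p_0+1)}\nabla_xH(\cdot,\xi_{i,\epsilon})$ is only sketched (the prefactor being $\epsilon^{>1}$ does not suffice by itself, since $\|\nabla_xH(\cdot,\xi_{i,\epsilon})\|_{L^{(p_0+1)/p_0}}$ blows up as $\xi_{i,\epsilon}\to\partial\Omega$), although that part can be made rigorous by the region splitting you describe.

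The paper avoids the decomposition entirely and thereby never needs to differentiate the harmonic correction separately: it differentiates the Green representation, $\nabla PU_i(x)=\int_\Omega\nabla_x\bigl(\gamma_n|x-y|^{2-n}-H(x,y)\bigr)V_i^{p_0}(y)\,dy$, uses the kernel bound $|\nabla_xG(x,y)|\leq C|x-y|^{1-n}$ from Lemma \ref{lemH}, applies the Hardy--Littlewood--Sobolev inequality with $1+\tfrac{p_0}{p_0+1}=\tfrac{n-1}{n}+\tfrac1r$, and computes $\|V_i^{p_0}\|_{L^r}=O(\delta_{i,\epsilon})=O(\epsilon^{1+\sigma})$ by scaling; the integral estimates then follow by H\"older exactly as in your first step. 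If you prefer to keep your splitting, the cleanest repair is to write $\nabla h_i(x)=\gamma_n\int_{\R^n\setminus\Omega}\nabla_x|x-y|^{2-n}\,V_i^{p_0}(y)\,dy+\int_\Omega\nabla_xH(x,y)\,V_i^{p_0}(y)\,dy$ and estimate both integrals with the same kernel bound and HLS, rather than relying on the pointwise expansion of $h_i$ from Lemma \ref{lemP}, whose remainder you cannot differentiate.
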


 \begin{proof}

From the integral equation of $PU_i$ we know that
\begin{align*}
\nabla PU_i(x)=\int_\Omega\nabla_x\Big(\frac{\gamma_n}{|x-y|^{n-2}}-H(x,y)
\Big)V_i^{p_0}dy.
\end{align*}

We first estimate \eqref{2}.  By use of Hardy-Littlewood-Sobolev inequalities, we have
\begin{align*}
\|\nabla PU_i\|_{L^{\frac{p_0+1}{p_0}}(\Omega)}
&\leq C\Big\|\int\frac1{|x-y|^{n-1}}V_i^{p_0}(y)dy\Big\|_{L^{\frac{p_0+1}{p_0}}(\Omega)}\\
&\leq C\|V_i^{p_0}\|_{L^{r}}
=O\left( \delta_{i,\epsilon}^{\frac n{r}-\frac{p_0}{p_0+1}n}\right)
=O\left(\delta_{i,\epsilon}^{\frac{n-2}{n-1}+\sigma}\right),
\end{align*}
where $1+\frac{p_0}{p_0+1}=\frac{n-1}{n}+\frac1{r}$ and we find $r>\frac{n}{p_0(n-2)}$.

Similarly,
\begin{align*}
&\|\nabla PV_i\|_{L^{\frac{q_0+1}{q_0}}(\Omega)}
=O\left(\delta_{i,\epsilon}^{\frac{n-2}{n-1}+\sigma}\right),
\end{align*}
and we have proved \eqref{2}.

Next, for \eqref{1},
\begin{align}\label{1'}
\begin{split}
\int_\Omega|\nabla PU_i|PV_j\leq C\|\nabla PU_i\|_{L^{\frac{p_0+1}{p_0}}(\Omega)}\|PV_j\|_{L^{p_0+1}}=O(\epsilon^{1+\sigma}).
\end{split}\end{align}

 Similarly,
 \begin{align}\label{1''}
\begin{split}
&\int_\Omega|\nabla PV_i|PU_j=O(\epsilon^{1+\sigma}).
\end{split}\end{align}

 \end{proof}
\medskip

\medskip

For $j=1,\ldots,n$, we also denote
\begin{align}\label{psiphi}
\Psi_{\xi,\delta}^0=\frac{\partial U_{\xi,\delta}}{\partial\delta},\ \Psi_{\xi,\delta}^j=\frac{\partial U_{\xi,\delta}}{\partial \xi^j}, \ \  \ \
\Phi_{\xi,\delta}^0=\frac{\partial V_{\xi,\delta}}{\partial\delta},\ \Phi_{\xi,\delta}^j=\frac{\partial V_{\xi,\delta}}{\partial \xi^j}.
\end{align}
It is known that the space spanned by the $n+1$ pairs $(\Psi_{\xi,\delta}^j,\Phi_{\xi,\delta}^j)$ is the set of solutions of the linearized problem
\begin{align}\label{eqlinear}
\begin{cases}
 \displaystyle -\Delta\Psi=p_0V_{\xi,\delta}^{p_0-1}\Phi \ \ &\text{in}\ \mathbb R^n,\vspace{0.12cm}\\
 \displaystyle -\Delta\Phi=q_0U_{\xi,\delta}^{q_0-1}\Psi \ \ &\text{in}\ \mathbb R^n,\vspace{0.12cm}\\
\displaystyle (\Psi,\Phi)\in  \dot{W}^{2,\frac{p_0+1}{p_0}}(\mathbb R^n)\times\dot{W}^{2,\frac{q_0+1}{q_0}}(\mathbb R^n).
\end{cases}
\end{align}

Denote $(\Psi_{i}^l,\Phi_{i}^l)=(\Psi_{\xi_{i,\epsilon},\delta_{i,\epsilon}}^l,\Phi_{\xi_{i,\epsilon},\delta_{i,\epsilon}}^l)$ for simplicity.
For $i=1,\ldots,\kappa, l=0,1,\ldots,n$, let the pair $(P\Psi_{i}^l,P\Phi_{i}^l)$ be the unique smooth solution of the system
\begin{align}\label{eqP}
\begin{cases}
 \displaystyle -\Delta P\Psi_{i}^l=p_0V_{i}^{p_0-1}\Phi_{i}^l \ \ &\text{in}\ \Omega,\vspace{0.12cm}\\
 \displaystyle -\Delta P\Phi_{i}^l=q_0U_{i}^{q_0-1}\Psi_{i}^l \ \ &\text{in}\ \Omega,\vspace{0.12cm}\\
\displaystyle P\Psi_{i}^l=P\Phi_{i}^l=0 \ \ &\text{on}\ \partial\Omega.
\end{cases}
\end{align}
Then by use of the comparison argument we have
\begin{Lem}\label{lemP2}
For $i=1,\ldots,\kappa$ and $l=0,1,\ldots,n$, for $x\in\Omega$,
  \begin{align*}
 & P\Psi_{i}^l=\begin{cases}
 \Psi_{i}^l+\frac{a_{n,p_0}}{\gamma_n}\delta_{i,\epsilon}^{\frac{n}{p_0+1}-1}H(x,\xi_{i,\epsilon})+o(\delta_{i,\epsilon}^{\frac{n}{p_0+1}-1}\eta_{i,\epsilon}^{-(n-2)}),\ &l=0\\
  \Psi_{i}^l+\frac{a_{n,p_0}}{\gamma_n}\delta_{i,\epsilon}^{\frac{n}{p_0+1}}\partial_{\xi,l}H(x,\xi_{i,\epsilon})+o(\delta_{i,\epsilon}^{\frac{n}{p_0+1}}\eta_{i,\epsilon}^{-(n-1)}),\ &l=1,\dots,n,
 \end{cases}
 \end{align*}
and
  \begin{align*}
 & P\Phi_{i}^l=\begin{cases}
 \Phi_{i}^l+\frac{b_{n,q_0}}{\gamma_n}\delta_{i,\epsilon}^{\frac{n}{q_0+1}-1}H(x,\xi_{i,\epsilon})+o(\delta_{i,\epsilon}^{\frac{n}{q_0+1}-1}\eta_{i,\epsilon}^{-(n-2)}),\ &l=0\\
  \Psi_{i}^l+\frac{b_{n,q_0}}{\gamma_n}\delta_{i,\epsilon}^{\frac{n}{q_0+1}}\partial_{\xi,l}H(x,\xi_{i,\epsilon})+o(\delta_{i,\epsilon}^{\frac{n}{q_0+1}}\eta_{i,\epsilon}^{-(n-1)}),\ &l=1,\dots,n,
 \end{cases}
 \end{align*}
where $\partial_{\xi,l}H(x,\xi_{i,\epsilon})$ is the $l-$th components of  $\nabla_{\xi_{i,\epsilon}} H(x,\xi_{i,\epsilon})$.

 \end{Lem}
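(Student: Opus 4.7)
The strategy is to differentiate the pointwise expansion for $U_i - PU_i$ provided by Lemma~\ref{lemP}, exploiting the fact that parameter differentiation commutes with the Dirichlet projection.

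First, I would establish a commutation identity. Differentiating the defining PDE $-\Delta PU_{\xi,\delta} = V_{\xi,\delta}^{p_0}$ in $\delta$ (resp.\ in $\xi^l$) and invoking uniqueness of the smooth solution of the resulting linear problem, which is precisely \eqref{eqP}, yields
\[
\partial_\delta PU_i = P\Psi_i^0,\qquad \partial_{\xi^l} PU_i = P\Psi_i^l \quad (l=1,\ldots,n),
\]
and analogously $\partial_\delta PV_i = P\Phi_i^0$, $\partial_{\xi^l}PV_i = P\Phi_i^l$.

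Combining this with the pointwise decomposition from Lemma~\ref{lemP},
\[
U_i - PU_i = \tfrac{a_{n,p_0}}{\gamma_n}\delta_{i,\epsilon}^{\frac{n}{p_0+1}}H(x,\xi_{i,\epsilon}) - R_{1,\xi_{i,\epsilon},\delta_{i,\epsilon}}(x),
\]
and differentiating in $\delta_{i,\epsilon}$ and $\xi^l_{i,\epsilon}$ yields, schematically,
\[
P\Psi_i^0 - \Psi_i^0 = -\tfrac{a_{n,p_0}n}{(p_0+1)\gamma_n}\delta_{i,\epsilon}^{\frac{n}{p_0+1}-1}H + \partial_\delta R_1, \quad P\Psi_i^l - \Psi_i^l = -\tfrac{a_{n,p_0}}{\gamma_n}\delta_{i,\epsilon}^{\frac{n}{p_0+1}}\partial_{\xi,l}H + \partial_{\xi^l}R_1,
\]
with the companion identities for $P\Phi_i^l$ obtained by the identical argument, replacing $(U,a_{n,p_0},p_0)$ by $(V,b_{n,p_0},q_0)$. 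Matching the leading $\delta$-powers against the stated expansion is then a direct bookkeeping using Lemma~\ref{lemasym}.

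The crux of the argument is bounding $\partial_\delta R_1$ and $\partial_{\xi^l}R_1$. Since $PU_i - U_i$ is harmonic in $\Omega$ (both solve $-\Delta\,\cdot\, = V_i^{p_0}$) and $H(\cdot,\xi)$ is harmonic, $R_1$ is harmonic in $\Omega$, and differentiation in $\delta$, $\xi^l$ preserves harmonicity. By the maximum principle it suffices to control boundary traces: on $\partial\Omega$, $PU_i = 0$ and $H(x,\xi)=\gamma_n|x-\xi|^{-(n-2)}$, so
\[
R_1\big|_{\partial\Omega} = -U_i(x) + a_{n,p_0}\,\delta_{i,\epsilon}^{\frac{n}{p_0+1}}|x-\xi_{i,\epsilon}|^{-(n-2)}.
\]
Plugging in the refined expansion \eqref{Uasym} applied to the rescaled bubble recovers the $O(\delta^{n/(p_0+1)+1}/d(\xi)^{n-1})$ bound of \eqref{estR}. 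Differentiating this boundary expansion in $\delta$ and $\xi^l$, and invoking the gradient estimates of Lemma~\ref{lemH}, produces the claimed $o(\delta^{n/(p_0+1)-1}\eta^{-(n-2)})$ and $o(\delta^{n/(p_0+1)}\eta^{-(n-1)})$ control on $\partial_\delta R_1$ and $\partial_{\xi^l}R_1$, with $\eta=d(\xi_{i,\epsilon})$.

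The main obstacle is that differentiation in $\delta$ effectively costs a factor $\delta^{-1}$, while differentiation in $\xi^l$ costs a factor $\eta^{-1}=d(\xi)^{-1}$ near the boundary, so one cannot simply differentiate the $L^\infty$-bound of Lemma~\ref{lemP}. Instead one must redo the comparison argument with an additional Taylor refinement of $U_{0,1}$ beyond leading order and of $H(\cdot,\xi)$ near the boundary reflection $\bar\xi$, carefully tracking the interplay between the two small scales $\delta_{i,\epsilon}$ and $\eta_{i,\epsilon}=\epsilon t_i$ set by \eqref{set}; that bookkeeping is where the real work lies.
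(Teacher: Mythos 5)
Your proposal follows essentially the same route as the paper, which offers no written proof beyond the phrase ``by use of the comparison argument'': that argument is precisely what you spell out, namely that parameter differentiation commutes with the Dirichlet projection (so $\partial_\delta PU_i=P\Psi_i^0$, $\partial_{\xi^l}PU_i=P\Psi_i^l$, etc.), that the resulting differences are harmonic in $\Omega$, and that the maximum principle reduces everything to boundary traces controlled by the refined asymptotics of $U_{0,1}$, $V_{0,1}$ and Lemma \ref{lemH}. The only caveat is bookkeeping: differentiating the expansion of Lemma \ref{lemP} in $\delta$ yields the coefficient $-\frac{n}{p_0+1}\frac{a_{n,p_0}}{\gamma_n}\delta_{i,\epsilon}^{\frac{n}{p_0+1}-1}H(x,\xi_{i,\epsilon})$ for $l=0$ (and analogous sign/constant adjustments in the other cases), so the constants and signs as printed in the lemma appear to be typographical, and your schematic identities are the correct ones.
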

\medskip

\subsection{Projection for $p_0\in(1,\frac{n}{n-2})$}
\medskip

For any $i\in\{1,\ldots,\kappa\}$, the harmonic function $h_i=U_i-PU_i$
satisfies that
\begin{align*}
\begin{cases}-\Delta h_i=0  &x\in\Omega,\\
h_i(x)=U_i(x) &x\in\partial\Omega.
\end{cases}
\end{align*}
On the one hand, the maximal principle implies that
\begin{align}\label{hi}
0\leq h_i(x)\leq\sup_{x\in\partial\Omega}U_i(x)\leq C\delta_{i,\epsilon}^{\frac{p_0n}{q_0+1}}\eta_{i,\epsilon}^{-\frac{p_0+1}{q_0+1}n}.
 \end{align}
 On the other hand, since $G(x,y)\leq\frac C{|x-y|^{n-2}}$, it can be proved that
  \begin{align*}
PU_i(x)\leq \int_\Omega G(x,y)V_i^{p_0}(y)dy\leq
\frac{C\delta_{i,\epsilon}^{-\frac{n}{q_0+1}}}{(1+\frac{|x-\xi_{i,\epsilon}|}{\delta_{i,\epsilon}})^{(n-2)p_0-2}}.
 \end{align*}
Hence we have
 \begin{align}\label{hi'}
h_i(x)\leq \frac{C\delta_{i,\epsilon}^{-\frac{n}{q_0+1}}}{(1+\frac{|x-\xi_{i,\epsilon}|}{\delta_{i,\epsilon}})^{(n-2)p_0-2}}.
 \end{align}

In view of \eqref{hi} and \eqref{hi'}, we have the following result analogous to Lemma \ref{lemP}.
 \begin{Lem}\label{lemP'2}
 There exists $c>0$ such that for all $x\in\Omega$,
  \begin{align}\label{0}
  \begin{split}
 &0\leq PU_i\leq U_i,\ \ \ 0\leq PV_i\leq V_i,\\
 &0\leq U_i- PU_i \leq C\delta_{i,\epsilon}^{\frac{p_0n}{q_0+1}}\eta_{i,\epsilon}^{-\frac{p_0+1}{q_0+1}n},\ \
0\leq U_i- PU_i \leq \frac{C\delta_{i,\epsilon}^{-\frac{n}{q_0+1}}}{(1+\frac{|x-\xi_{i,\epsilon}|}{\delta_{i,\epsilon}})^{(n-2)p_0-2}},
 \\
  &0\leq V_i- PV_i\leq\frac{b_{n,p_0}}{\gamma_n}\delta_{i,\epsilon}^{\frac{n}{q_0+1}}H(x,\xi_{i,\epsilon})
  \leq\frac{c_2\delta_{i,\epsilon}^{\frac{n}{q_0+1}}}{|x-\bar\xi_{i,\epsilon}|^{n-2}}.
 \end{split}\end{align}

 Moreover, there hold that
 \begin{align*}
 %&R_{1,\xi,\delta}(x):=PU_{\xi,\delta}-U_{\xi,\delta}+a_{n,p_0}\delta^{\frac{p_0n}{q_0+1}}\hat H(x,\xi),\\
 &R_{2,\xi_{i,\epsilon},\delta_{i,\epsilon}}(x):=PV_i-V_i
 +\frac{b_{n,p_0}}{\gamma_n}\delta_{i,\epsilon}^{\frac{n}{q_0+1}}H(x,\xi_{i,\epsilon}),\ \
 \|R_{2,\xi_{i,\epsilon},\delta_{i,\epsilon}}\|_{L^\infty(\Omega)}=O(\frac{\delta_{i,\epsilon}^{\frac{n}{q_0+1}+1}}{\eta_{i,\epsilon}^{n-1}}).\end{align*}
 \end{Lem}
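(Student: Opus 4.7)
The lemma collects four assertions: (i) the two-sided bounds $0\le PU_i\le U_i$ and $0\le PV_i\le V_i$; (ii) two pointwise upper bounds on $h_i:=U_i-PU_i$; (iii) the pointwise bound on $V_i-PV_i$ in terms of $H$; and (iv) the $L^\infty$-bound on the remainder $R_{2,\xi_{i,\epsilon},\delta_{i,\epsilon}}$. I will treat them in this order, exploiting that $p_0<\frac{n}{n-2}$ forces $q_0>\frac{n}{n-2}$ on the critical hyperbola \eqref{p0q0}, so that by Lemma \ref{lemasym} the component $V_{0,1}$ still enjoys the ``standard'' decay $r^{-(n-2)}$ while $U_{0,1}$ only decays like $r^{-((n-2)p_0-2)}$. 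This asymmetry dictates the two distinct arguments used for $U$ and $V$.

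\emph{Sandwich bounds.} For the first claim, $-\Delta(PU_i)=V_i^{p_0}\ge 0$ in $\Omega$ with $PU_i=0$ on $\partial\Omega$ gives $PU_i\ge 0$ by the maximum principle; and $h_i=U_i-PU_i$ is harmonic in $\Omega$ with $h_i=U_i\ge 0$ on $\partial\Omega$, so $h_i\ge 0$, i.e.\ $PU_i\le U_i$. The analogous argument applied to the second equation yields $0\le PV_i\le V_i$.

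\emph{Bounds on $h_i$.} These are exactly \eqref{hi} and \eqref{hi'}, which were established just above the statement of the lemma; I sketch the underlying plan. Since $\xi_{i,\epsilon}=\xi_i+\eta_{i,\epsilon}\nu(\xi_i)$ sits at distance $\eta_{i,\epsilon}$ from $\partial\Omega$, one has $|x-\xi_{i,\epsilon}|\ge c\,\eta_{i,\epsilon}$ for every $x\in\partial\Omega$. Combined with the slow-decay asymptotic $U_{0,1}(r)\le C\, r^{-((n-2)p_0-2)}$ from Lemma \ref{lemasym} and the rescaling $U_i(x)=\delta_{i,\epsilon}^{n/(q_0+1)}U_{0,1}\bigl(\delta_{i,\epsilon}^{-1}(x-\xi_{i,\epsilon})\bigr)$, the maximum principle applied to the harmonic function $h_i$ on $\partial\Omega$ produces the first bound \eqref{hi}. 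The second bound follows from $h_i\le U_i$ together with the global pointwise estimate $U_i(x)\le C\,\delta_{i,\epsilon}^{n/(q_0+1)}\bigl(1+|x-\xi_{i,\epsilon}|/\delta_{i,\epsilon}\bigr)^{-((n-2)p_0-2)}$, or equivalently from estimating $PU_i$ via the Green-function representation and Lemma \ref{lemH}.

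\emph{Bound on $V_i-PV_i$ and the remainder.} Here the argument parallels Lemma \ref{lemP} because the $V$-component retains the standard decay. Using the refined asymptotic $V_{0,1}(r)=b_{n,p_0}r^{-(n-2)}+O\bigl(r^{-(n-1)}\bigr)$ (Lemma \ref{lemasym} together with a Kelvin-transform regularity argument as in \cite{kim-moon}), the rescaled bubble satisfies
\begin{equation*}
V_i(x)=\tfrac{b_{n,p_0}}{\gamma_n}\,\delta_{i,\epsilon}^{n/(q_0+1)}\tfrac{\gamma_n}{|x-\xi_{i,\epsilon}|^{n-2}}+O\!\left(\tfrac{\delta_{i,\epsilon}^{n/(q_0+1)+1}}{|x-\xi_{i,\epsilon}|^{n-1}}\right)
\end{equation*}
for $x\in\Omega$. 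The function $\widetilde R(x):=PV_i(x)-V_i(x)+\tfrac{b_{n,p_0}}{\gamma_n}\delta_{i,\epsilon}^{n/(q_0+1)}H(x,\xi_{i,\epsilon})$ is harmonic in $\Omega$, so one reads off its boundary values from the displayed expansion and from $H(x,\xi_{i,\epsilon})=\gamma_n|x-\xi_{i,\epsilon}|^{-(n-2)}$ on $\partial\Omega$; applying the maximum principle together with Lemma \ref{lemH} (and using $|x-\xi_{i,\epsilon}|\ge c\,\eta_{i,\epsilon}$ on $\partial\Omega$) yields both the pointwise bound on $V_i-PV_i$ and the $L^\infty$-estimate $\|R_{2,\xi_{i,\epsilon},\delta_{i,\epsilon}}\|_\infty=O(\delta_{i,\epsilon}^{n/(q_0+1)+1}/\eta_{i,\epsilon}^{n-1})$.

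\emph{Main obstacle.} The only genuine novelty lies in part (ii): because $U_{0,1}$ decays more slowly than the fundamental solution of $-\Delta$, one cannot represent $h_i$ via a leading $H$-term as in the case $p_0>\frac{n}{n-2}$, and the cruder bounds \eqref{hi}, \eqref{hi'} are the best available. Tracking the resulting powers of $\delta_{i,\epsilon}\sim\epsilon^{((n-2)p_0-1)/((n-2)p_0-2)}$ and $\eta_{i,\epsilon}\sim\epsilon$ consistently is the delicate point; once it is done, the $V$-side estimates reduce to the already-established Lemma \ref{lemP} argument.
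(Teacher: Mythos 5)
Your proposal follows essentially the same route as the paper: maximum-principle sandwich bounds, the two estimates on $h_i=U_i-PU_i$ which are exactly the paper's \eqref{hi}--\eqref{hi'} (boundary sup of $U_i$ via the slow decay of $U_{0,1}$, plus the Green-function/pointwise bound on $PU_i$), and for the $V$-component the same Lemma~\ref{lemP}-type comparison: harmonicity of $R_{2,\xi_{i,\epsilon},\delta_{i,\epsilon}}$, the refined expansion $V_{0,1}(r)=b_{n,p_0}r^{-(n-2)}+O(r^{-(n-1)})$ from \cite{kim-moon}, Lemma~\ref{lemH} and the maximum principle. The only blemish is notational: your rescaling $U_i(x)=\delta_{i,\epsilon}^{n/(q_0+1)}U_{0,1}(\delta_{i,\epsilon}^{-1}(x-\xi_{i,\epsilon}))$ should carry the prefactor $\delta_{i,\epsilon}^{-n/(q_0+1)}$ for the exponents in \eqref{hi}--\eqref{hi'} to come out as stated, but this does not affect the argument.
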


\iffalse
For the same definition of \eqref{psiphi}, we also have
\begin{Lem}\label{lemP22}
For $i=1,\ldots,\kappa$ and $l=0,1,\ldots,n$, for $x\in\Omega$,
  \begin{align*}
 & P\Psi_{i}^l=\begin{cases}
 \Psi_{i}^l+O(\delta_{i,\epsilon}^{-1}U_i(x)),\ &l=0\\
  \Psi_{i}^l+O(\delta_{i,\epsilon}^{-1}\partial_{l}U_i(x)),\ &l=1,\dots,n,
 \end{cases}
 \end{align*}
and
  \begin{align*}
 & P\Phi_{i}^l=\begin{cases}
 \Phi_{i}^l+\frac{b_{n,q_0}}{\gamma_n}\delta_{i,\epsilon}^{\frac{n}{q_0+1}-1}H(x,\xi_{i,\epsilon})+o(\delta_{i,\epsilon}^{\frac{n}{q_0+1}-1}\eta_{i,\epsilon}^{-(n-2)}),\ &l=0\\
  \Psi_{i}^l+\frac{b_{n,q_0}}{\gamma_n}\delta_{i,\epsilon}^{\frac{n}{q_0+1}}\partial_{\xi,l}H(x,\xi_{i,\epsilon})+o(\delta_{i,\epsilon}^{\frac{n}{q_0+1}}\eta_{i,\epsilon}^{-(n-1)}),\ &l=1,\dots,n,
 \end{cases}
 \end{align*}
where $\partial_{\xi,l}$ is the $l-$th components of  $\nabla_{\xi_{i,\epsilon}}$.

 \end{Lem}
 \fi

 \medskip

In this case, due to the completely different exponent range from that in Lemma \ref{lemP'}, we must re-prove the integrals involving the gradient functions.

 \begin{Lem}\label{lemP'2'}
There exists $\sigma>0$ such that
\begin{align}\label{2'}
\|\nabla PU_i\|_{L^{\frac{p_0+1}{p_0}}(\Omega)}=O\left(\epsilon^{1+\sigma}\right),\ \
\|\nabla PV_i\|_{L^{\frac{q_0+1}{q_0}}(\Omega)}=O\left(\epsilon^{1+\sigma}\right).
\end{align}
Moreover,
\begin{align}\label{1'''}
\int_\Omega|\nabla PU_i|PV_j=O\left(\epsilon^{1+\sigma}\right),\ \ \int_\Omega|\nabla PV_i|PU_j=O\left(\epsilon^{1+\sigma}\right).
\end{align}

 \end{Lem}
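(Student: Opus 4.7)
The plan is to mirror the proof of Lemma \ref{lemP'}, but adapt the gradient estimate for $PV_i$ to the slower decay of $U_{0,1}$ in the regime $p_0<\frac{n}{n-2}$; once \eqref{2'} is established, \eqref{1'''} follows by H\"older's inequality exactly as in the display \eqref{1'}--\eqref{1''}. Because $V_{0,1}$ still has the standard decay $r^{-(n-2)}$ by Lemma \ref{lemasym}, only the $PV_i$ step really requires new work.

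For $\|\nabla PU_i\|_{L^{(p_0+1)/p_0}(\Omega)}$, I would literally repeat the argument of Lemma \ref{lemP'}: starting from $\nabla PU_i(x)=\int_\Omega\nabla_xG(x,y)V_i^{p_0}(y)\,dy$ with $|\nabla_xG(x,y)|\leq C|x-y|^{-(n-1)}$ (Lemma \ref{lemH}), I apply the Hardy--Littlewood--Sobolev inequality to deduce $\|\nabla PU_i\|_{L^{(p_0+1)/p_0}(\Omega)}\leq C\|V_i^{p_0}\|_{L^r}$ for $1/r=1/n+p_0/(p_0+1)$, the integrability constraint $r>n/(p_0(n-2))$ being precisely equivalent (after some algebra) to the quadratic condition $(n-2)p_0^2-3p_0-1>0$, i.e.\ the lower bound $p_0>p_n$ encoded in hypothesis \eqref{P}. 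Scaling then yields $\|V_i^{p_0}\|_{L^r}=O(\delta_{i,\epsilon}^{\gamma_1})$ with $\gamma_1$ at least as large as $(n-2)/(n-1)$; under the regime-appropriate relation $\delta_{i,\epsilon}=\epsilon^{((n-2)p_0-1)/((n-2)p_0-2)}\Lambda_i$, the product of exponents exceeds $1$ precisely because $p_0<\frac{n}{n-2}$ makes $((n-2)p_0-1)/((n-2)p_0-2)>1$, giving $O(\epsilon^{1+\sigma})$.

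For $\|\nabla PV_i\|_{L^{(q_0+1)/q_0}(\Omega)}$, the analogous Riesz-potential reduction gives $\|\nabla PV_i\|_{L^{(q_0+1)/q_0}(\Omega)}\leq C\|U_i^{q_0}\|_{L^s}$ with $1/s=1/n+q_0/(q_0+1)$, but now the decay of $U_{0,1}$ is much slower: by Lemma \ref{lemasym}, $U_{0,1}(r)\sim r^{-((n-2)p_0-2)}$. The critical-hyperbola identity $((n-2)p_0-2)q_0=n+2+2p_0$ (read off from \eqref{p0q0}) shows $U_{0,1}^{q_0}(r)\sim r^{-(n+2+2p_0)}$, so the required integrability of $U_{0,1}^{q_0s}$ at infinity, namely $s>n/(n+2+2p_0)$, is automatic at the HLS-optimal $s$. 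A scaling computation then yields $\|U_i^{q_0}\|_{L^s}=O(\delta_{i,\epsilon}^{\gamma_2})$ with $\gamma_2$ large enough that the same exponent arithmetic as above converts it into $\|\nabla PV_i\|_{L^{(q_0+1)/q_0}(\Omega)}=O(\epsilon^{1+\sigma})$.

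Finally, \eqref{1'''} follows at once from H\"older: $\int_\Omega|\nabla PU_i|\,PV_j\leq\|\nabla PU_i\|_{L^{(p_0+1)/p_0}(\Omega)}\|PV_j\|_{L^{p_0+1}(\Omega)}$, where the first factor is $O(\epsilon^{1+\sigma})$ by \eqref{2'} and the second is uniformly bounded by combining $0\leq PV_j\leq V_j$ (Lemma \ref{lemP'2}) with the Sobolev embedding \eqref{emb}; the symmetric estimate for $\int|\nabla PV_i|\,PU_j$ is identical. The main obstacle is the $\nabla PV_i$ step: the much weaker pointwise decay of $U_{0,1}$ forces one to verify separately---using the critical-hyperbola identity---that the HLS integrability of $U_i^{q_0}$ is preserved at the optimal exponent and that the resulting power of $\delta_{i,\epsilon}$ still dominates the threshold $((n-2)p_0-2)/((n-2)p_0-1)$ imposed by the modified scaling; without the algebraic identity from \eqref{p0q0} one would lose the strict gain $\sigma>0$ and only obtain $O(\epsilon)$, which would be insufficient for the reduction scheme in Section 4.
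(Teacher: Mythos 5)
Your proposal is correct and follows essentially the same route as the paper's own proof: the identical Riesz‑potential/HLS reduction at the exponents $\tfrac1r=\tfrac{p_0}{p_0+1}+\tfrac1n$ and $\tfrac1s=\tfrac{q_0}{q_0+1}+\tfrac1n$, with the integrability of $V^{p_0r}$ equivalent to $p_0>p_n$ from \eqref{P}, the critical‑hyperbola identity $((n-2)p_0-2)q_0=n+2+2p_0$ (written in the paper as $\tfrac{n(p_0+1)q_0}{q_0+1}$) guaranteeing integrability of $U^{q_0s}$, and H\"older together with $0\le PV_j\le V_j$ and scale invariance of $\|V_j\|_{L^{p_0+1}}$ for \eqref{1'''}. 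The only point on which you are silent is the same one the paper glosses over, namely that the forced exponent $s$ must also exceed $1$ (equivalently $q_0<n-1$), which can fail for $n=4,5$ and small $p_0$ and then requires using the extra smallness of $\nabla_x G(\cdot,\xi_{i,\epsilon})$ away from $\xi_{i,\epsilon}$ due to $d(\xi_{i,\epsilon},\partial\Omega)\sim\epsilon$, so this does not distinguish your argument from the paper's.
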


 \begin{proof}

From the integral equation of $PU_i$ we know that
\begin{align*}
\nabla PU_i(x)=\int_\Omega\nabla_x\Big(\frac{\gamma_n}{|x-y|^{n-2}}-H(x,y)
\Big)V_i^{p_0}dy.
\end{align*}

First, we estimate \eqref{2'}.
From Hardy-Littlewood-Sobolev inequalities,  %for some $B_R(0)\supset\Omega$, we have
\begin{align*}
\|\nabla PU_i\|_{L^{\frac{p_0+1}{p_0}}(\Omega)}
&\leq C\Big\|\int \frac1{|x-y|^{n-1}}V_i^{p_0}(y)dy\Big\|_{L^{\frac{p_0+1}{p_0}}(\Omega)}\\
&\leq C\|V_i^{p_0}\|_{L^{r}}
=O\left( \delta_{i,\epsilon}^{\frac n{r}-\frac{p_0}{p_0+1}n}\right)
=O\left(\delta_{i,\epsilon}^{\frac{(n-2)p_0-2}{(n-2)p_0-1}+\sigma}\right),
\end{align*}
where we use  $1+\frac{p_0}{p_0+1}=\frac{n-1}{n}+\frac1{r}$  and by the assumption $p_0>p_n$ in \eqref{P}, we check easily that
$$\frac{(n-2)p_0-2}{n((n-2)p_0-1)}+\frac{p_0}{p_0+1}<\frac1r=\frac{p_0}{p_0+1}+\frac1n
<\frac{(n-2)p_0}n.$$

Next, we prove that
\begin{align*}
\|\nabla PV_i\|_{L^{\frac{q_0+1}{q_0}}(\Omega)}
&\leq C\Big\|\int_\Omega \frac1{|x-y|^{n-1}}U_i^{q_0}(y)dy\Big\|_{L^{\frac{q_0+1}{q_0}}(\Omega)}\\
&\leq C\|U_i^{q_0}\|_{L^{r}}
=O\left( \delta_{i,\epsilon}^{\frac n{r}-\frac{q_0}{q_0+1}n}\right)
=O\left(\delta_{i,\epsilon}^{\frac{(n-2)p_0-2}{n((n-2)p_0-1)}+\sigma}\right),
\end{align*}
where we estimate by noting that
$$\frac{(n-2)p_0-2}{(n-2)p_0-1}+\frac{q_0}{q_0+1}<\frac1r=\frac{q_0}{q_0+1}+\frac1n<\frac{(p_0+1)q_0}{q_0+1}.$$ 
\iffalse
such that
for any small $\theta_0>0$,
\begin{align*}
\|\nabla PV_i\|_{L^{\frac{q_0+1}{q_0}}(\Omega)}\leq C\left\|\frac1{|x|^{n-1}}\right\|_{L^{r_1}(B_M(0)}\|U_i^{q_0}\|_{L^{r}}
=O\left( \delta_{i,\epsilon}^{\frac n{r}-\frac{q_0}{q_0+1}n-\theta_0}\right)
=O\left(\delta_{i,\epsilon}^{\frac{n-2}{n-1}+\sigma}\right);
\end{align*}

While if  $((n-2)p_0-2)q_0r=\frac{(p_0+1)q_0nr}{q_0+1}<n$, we take $r_1<\frac n{n-1}$ and $\frac{q_0p_0}{q_0+1}<\frac1r<\frac{q_0}{q_0+1}+\frac1n$ such that under the condition
\eqref{p0small},
\begin{align*}
\|\nabla PV_i\|_{L^{\frac{q_0+1}{q_0}}(\Omega)}\leq C\left\|\frac1{|x|^{n-1}}\right\|_{L^{r_1}(B_M(0)}\|U_i^{q_0}\|_{L^{r}(\Omega)}
=O\left( \delta_{i,\epsilon}^{\frac{q_0p_0n}{q_0+1}}\right)
=O\left(\delta_{i,\epsilon}^{\frac{n-2}{n-1}+\sigma}\right).
\end{align*}
\fi
Then we conclude \eqref{2'}.

Moreover, we estimate that
\begin{align*}
\begin{split}
\int_\Omega|\nabla PU_i|PV_j\leq C\|\nabla PU_i\|_{L^{\frac{p_0+1}{p_0}}(\Omega)}\|PV_j\|_{L^{p_0+1}}=O(\epsilon^{1+\sigma}).
\end{split}\end{align*}

\iffalse
Also by setting $$\delta_{i,\epsilon}:=\delta\Lambda_i,$$ applying Hardy-Littlewood-Sobolev inequalities, $\frac1{r'}+\frac1r+1=2$, we have
\begin{align}\label{1'}
\begin{split}
\int_\Omega|\nabla PU_i|PV_j&\leq\int_\Omega\int_\Omega V_j(x)\frac1{|x-y|^{n-1}}V_i^{p_0}(y)dy\\
&\leq C\|V_j\|_{L^{r'}(\Omega)}\left\|\frac1{|x|^{n-1}}\right\|_{L^r(B_M(0)}\|V_i^{p_0}\|_{L^1(\Omega)}\\
&=O\left(\delta_{i,\epsilon}^{\frac n{r'}-\frac n{p_0+1}}\delta_{i,\epsilon}^{n-\frac{ np_0}{p_0+1}}\right)=O\left(\delta_{i,\epsilon}^{\frac n{r'}}\right).
\end{split}\end{align}
where we choose $r<\frac n{n-1}$ to ensure $\|\frac1{|x|^{n-1}}\|_{L^r(B_M(0)}<\infty$.
If we further assume $r>\frac{n(n-1)}{(n-1)^2+1}$, then $\frac n{r'}=n(1-\frac1r)>\frac{n-2}{n-1}$, that is there exists $\sigma>0$ such that
 $\frac n{r'}=\frac{n-2}{n-1}+\sigma$.
\fi
 Similarly,
 \begin{align*} 
\begin{split}
&\int_\Omega|\nabla PV_i|PU_j=O(\epsilon^{1+\sigma}),
\end{split}\end{align*}
which is \eqref{1'''}.

 \end{proof}
\medskip

\medskip

\medskip

\section{Function Spaces and Problem Setting}
 \iffalse
Having \eqref{emb} in mind, we introduce the following Banach space
\begin{align*}
&X_{p_0,q_0}:=\Big\{\Big(W^{2,\frac{p_0+1}{p_0}}(\Omega)\cap W_0^{1,p^*}(\Omega)\Big)\times \Big(W^{2,\frac{q_0+1}{q_0}}(\Omega)\cap W_0^{1,q^*}(\Omega)\Big)
\Big\}\\
& X_{p,q}:=\{(v_1,v_2)\in X_{p_0,q_0}: v_1\in L^{q_0+1-\beta\epsilon}(\Omega), v_2\in L^{p_0+1-\alpha\epsilon}(\Omega)
\}
\end{align*}
which is equipped with the norm
\begin{align*}
\|(v_1,v_2)\|_{X_{p,q,\epsilon}}=\|\Delta v_1\|_{L^{\frac{p_0+1}{p_0}}(\Omega)}+\|\Delta v_2\|_{L^{\frac{q_0+1}{q_0}}(\Omega)}
+\|v_1\|_{L^{q_0+1-\beta\epsilon}(\Omega)}+\|v_2\|_{L^{p_0+1-\alpha\epsilon}(\Omega)}
\end{align*}
\fi

Having \eqref{emb} in mind, we introduce the following Banach space
\begin{align*}
&X_{p_0,q_0}:=W_0^{1,p^*}(\Omega)\times W_0^{1,q^*}(\Omega)\hookrightarrow  L^{q_0+1}(\Omega)\times L^{p_0+1}(\Omega), \\
& X_{p,q}:=\{(v_1,v_2)\in X_{p_0,q_0}: v_1\in L^{q_0+1-\beta\epsilon}(\Omega), v_2\in L^{p_0+1-\alpha\epsilon}(\Omega)
\}
\end{align*}
which is equipped with the norm
\begin{align*}
\|(v_1,v_2)\|_{X_{p,q,\epsilon}}=\|\nabla v_1\|_{L^{p^*}(\Omega)}+\|\nabla v_2\|_{L^{q^*}(\Omega)}
+\|v_1\|_{L^{q+1}(\Omega)}+\|v_2\|_{L^{p+1}(\Omega)}.
\end{align*}

Since $a(x)$ is strictly positive and bounded in $\overline\Omega$,
the following functional is well-defined in $ X_{p,q}$:
\begin{align}\label{I}
I_\epsilon(u,v):=\int_{\Omega}a(x)\nabla u\cdot\nabla vdx
-\frac1{p+1}\int_{\Omega}a(x)| v|^{p+1}dx-\frac1{q+1}\int_{\Omega}a(x)|u|^{q+1}dx.
\end{align}
Moreover, we can take the equivalent norm of $X_{p,q}$ as
\begin{align*}
\|(v_1,v_2)\|:=\Big(\int_{\Omega}a(x)|\nabla v_1|^{p^*}\Big)^{\frac1{p^*}}+\Big(\int_{\Omega}a(x)|\nabla v_2|^{q^*}\Big)^{\frac1{q^*}}
\end{align*}
and the quadratic part
\begin{align*}
(u,v)_a:=\int_{\Omega}a(x)\nabla u\cdot\nabla vdx
\end{align*} of $I$ satisfies
\begin{align*}
|(u,v)_a|\leq C\|(u,v)\|^2.
\end{align*}

Denote by $i^*$ the formal adjoin operator of the embedding $i:X_{p_0,q_0}\hookrightarrow L^{q_0+1}(\Omega)\times L^{p_0+1}(\Omega)$. Then, by the Calder\'on-Zygmund estimate, the operator $i^*$ maps $L^{\frac{q_0+1}{q_0}}(\Omega)\times L^{\frac{p_0+1}{p_0}}(\Omega)$ to $X_{p_0,q_0}$, and we say $i^*(w_1,w_2)=(v_1,v_2)$
if and only if
\begin{align*}
\begin{cases}
-div(a(x)\nabla v_1)=a(x)w_2\ \ &in\ \Omega,\vspace{2mm}\\
-div(a(x)\nabla v_2)=a(x) w_1\ \ &in\ \Omega,\vspace{2mm}\\
v_1=v_2=0\ \ &on\ \partial\Omega,
\end{cases}
\end{align*}
or equivalently,  for all $\phi,\psi\in C_c^\infty(\Omega)$,
\begin{align*}
\begin{cases}
( v_1,\phi)_a=\dis\int_\Omega a(x)w_2\phi dx,\vspace{2mm}\\
(v_2,\psi)_a=\dis\int_\Omega a(x) w_1\psi dx.
\end{cases}
\end{align*}

Rewrite problem \eqref{eqv} as
\begin{align}\label{i*}
(v_1,v_2)=i^*(|v_1|^{q-1}v_1,|v_2|^{p-1}v_2).
\end{align}

The solutions of \eqref{eqv} in Theorem \ref{th1} are in fact of the form
\begin{align}\label{constructv'}
v_{1,\epsilon}=\sum_{i=1}^\kappa PU_i+\psi,\ \ \
v_{2,\epsilon}=\sum_{i=1}^\kappa PV_i+\phi,
\end{align}
where we recall $U_i=U_{\xi_{i,\epsilon},\delta_{i,\epsilon}},V_i=V_{\xi_{i,\epsilon},\delta_{i,\epsilon}}$ with
 $\delta_{i,\epsilon}$ and $\xi_{i,\epsilon}$ denoted by \eqref{set}.
 \vskip 0.5cm

Let $$W_1=W_{1,\xi,\Lambda,t}=\sum_{i=1}^\kappa PU_i,\ \ \ \ W_2=W_{2,\xi,\Lambda,t}=\sum_{i=1}^\kappa PV_i.$$
Recall $$\Psi^j_i=\Psi_{\xi_{i,\epsilon},\delta_{i,\epsilon}}^j,\ \ \ \Phi^j_i=\Phi_{\xi_{i,\epsilon},\delta_{i,\epsilon}}^j.$$    %%%%%%%%%验证正交的定义，不是内积空间的情况下
Set the spaces
\begin{align*}
K_{\xi,\Lambda,t}=span\Big\{(P\Psi_i^j, P\Phi_i^j),i=1,\ldots,\kappa, j=0,1,\ldots,n\Big\},
\end{align*}
\begin{align*}
E_{\xi,\Lambda,t}=\Big\{(\psi,\phi)\in X_{p,q}:\int_\Omega a(x)\left(\nabla P\Phi_i^j\cdot\nabla\psi+\nabla P\Psi_i^j\cdot\nabla\phi\right)
=0,i=1,\ldots,\kappa, j=0,1,\ldots,n\Big\}.
\end{align*}
Moreover, we introduce the orthogonal projection operators $\Pi_{\xi,\Lambda,t}$ and $\Pi^\bot_{\xi,\Lambda,t}$ in $X_{p,q}$ with ranges $K_{\xi,\Lambda,t}$
and $E_{\xi,\Lambda,t}$ respectively.
\medskip

To solve \eqref{i*}, we are to find $(\vec\xi,\vec\Lambda,\vec t)\in\Gamma$ and some function pair $(\psi,\phi)\in E_{\xi,\Lambda,t}$ such that
\begin{align}\label{bot}
\Pi^\bot_{\xi,\Lambda,t}\left((W_1+\psi,W_2+\phi)-i^*(|W_1+\psi|^{q-1}(W_1+\psi),|W_2+\phi|^{p-1}(W_2+\phi))\right)=0
\end{align}
and
\begin{align}\label{Pi}
\Pi_{\xi,\Lambda,t}\left((W_1+\psi,W_2+\phi)-i^*(|W_1+\psi|^{q-1}(W_1+\psi),|W_2+\phi|^{p-1}(W_2+\phi))\right)=0.
\end{align}
We carry out the reduction method in two steps. First, for given $(\xi,\Lambda,t)\in\Gamma$ and $\epsilon$ small, we find a pair
$(\psi,\phi)\in E_{\xi,\Lambda,t}$ such that \eqref{bot} holds. Second, a reduced problem is proved.

\medskip

\section{Finite-dimensional reduction}
\subsection{Linear Problem}

The linear operator $L_{\xi,\Lambda,t}: E_{\xi,\Lambda,t}\rightarrow E_{\xi,\Lambda,t}$ is defined as
\begin{align*}
L_{\xi,\Lambda,t}(\psi,\phi):=(\psi,\phi)-\Pi^\bot_{\xi,\Lambda,t}i^*(q_0W_1^{q_0-1}\psi,p_0W_2^{p_0-1}\phi)
\end{align*}

We first prove the following statement.
\begin{Prop}\label{propL}
For any compact subset $\Gamma_0$ of $\Gamma$, there exist $\epsilon_0>0$ and $C>0$ such that for any $\epsilon\in(0,\epsilon_0)$ and $(\xi,\Lambda,t)\in\Gamma_0$, the operator
$L_{\xi,\Lambda,t}$ is invertible and it holds that
\begin{align}\label{invert}
\|L_{\xi,\Lambda,t}(\psi,\phi)\|\geq C\|(\psi,\phi)\|,\ \ \forall(\psi,\phi)\in E_{\xi,\Lambda,t}.
\end{align}

\end{Prop}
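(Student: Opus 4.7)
The plan is to run the standard blow-up/contradiction argument, adapted to the weighted strongly indefinite setting and to the two exponent regimes covered by \eqref{P}. Suppose \eqref{invert} fails: there exist sequences $\epsilon_n \to 0$, $(\xi_n,\Lambda_n,t_n)\in\Gamma_0$ and $(\psi_n,\phi_n)\in E_{\xi_n,\Lambda_n,t_n}$ with $\|(\psi_n,\phi_n)\|=1$ while $L_{\xi_n,\Lambda_n,t_n}(\psi_n,\phi_n)\to 0$ in $X_{p,q}$. Unwinding the definitions of $L_{\xi,\Lambda,t}$, $i^*$ and $\Pi_{\xi,\Lambda,t}$, this means that $(\psi_n,\phi_n)$ solves a weighted linear system
\begin{align*}
\begin{cases}
-\mathrm{div}(a\nabla\psi_n)=a\,p_0 W_2^{p_0-1}\phi_n+a\sum_{i,l}c^{i,l}_n\,q_0 U_i^{q_0-1}\Psi_i^l+A_n, \\
-\mathrm{div}(a\nabla\phi_n)=a\,q_0 W_1^{q_0-1}\psi_n+a\sum_{i,l}c^{i,l}_n\,p_0 V_i^{p_0-1}\Phi_i^l+B_n,
\end{cases}
\end{align*}
in $\Omega$ with zero boundary values, where $\|A_n\|_{L^{(p_0+1)/p_0}}+\|B_n\|_{L^{(q_0+1)/q_0}}\to 0$ and the scalars $c^{i,l}_n$ are the Lagrange multipliers coming from the projection onto $K_{\xi_n,\Lambda_n,t_n}$.

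The first step is to show $c^{i,l}_n\to 0$ uniformly on $\Gamma_0$. Testing the two equations against $P\Phi_i^l$ and $P\Psi_i^l$ respectively and adding, the left hand side vanishes by the orthogonality that defines $E_{\xi_n,\Lambda_n,t_n}$. One thereby obtains a linear system in $\{c^{i,l}_n\}_{i,l}$ whose matrix is, by the projection and interaction estimates of Lemmas \ref{lemP}--\ref{lemP'} when $p_0>\tfrac{n}{n-2}$ and Lemmas \ref{lemP'2}--\ref{lemP'2'} when $p_0<\tfrac{n}{n-2}$, a small perturbation of a block-diagonal matrix whose blocks are the nondegenerate pairings $(\int_{\mathbb R^n}\nabla\Psi_{0,1}^l\cdot\nabla\Phi_{0,1}^{l'})_{l,l'}$ produced by Lemma \ref{lemnonde}. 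The right-hand side of this system is dominated by $\|(A_n,B_n)\|$ plus cross-bubble interactions bounded via the same Lemmas, so $c^{i,l}_n\to 0$.

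Next I would perform blow-up around each concentration point $\xi_{i,\epsilon_n}$ via
\begin{align*}
\tilde\psi^i_n(y)=\delta_{i,\epsilon_n}^{n/(q_0+1)}\psi_n(\xi_{i,\epsilon_n}+\delta_{i,\epsilon_n}y),\qquad
\tilde\phi^i_n(y)=\delta_{i,\epsilon_n}^{n/(p_0+1)}\phi_n(\xi_{i,\epsilon_n}+\delta_{i,\epsilon_n}y),
\end{align*}
defined on $\Omega^i_n:=\delta_{i,\epsilon_n}^{-1}(\Omega-\xi_{i,\epsilon_n})$. Since $\delta_{i,\epsilon_n}\ll\eta_{i,\epsilon_n}$, the rescaled domains exhaust $\mathbb R^n$ and the smooth positive weight $a$ converges locally uniformly to the constant $a(\tilde\xi_i)$, which factors out. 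The bound $\|(\psi_n,\phi_n)\|=1$ and the embeddings \eqref{emb} yield, along a subsequence, $(\tilde\psi^i_n,\tilde\phi^i_n)\rightharpoonup(\psi^i_\infty,\phi^i_\infty)$ in $\dot W^{1,p^*}(\mathbb R^n)\times\dot W^{1,q^*}(\mathbb R^n)$ to a solution of the linearized limit system at $(U_{0,1},V_{0,1})$, whence Lemma \ref{lemnonde} forces $(\psi^i_\infty,\phi^i_\infty)=\sum_{l=0}^n \alpha^i_l(\Psi_{0,1}^l,\Phi_{0,1}^l)$. Rescaling the orthogonality conditions that define $E_{\xi_n,\Lambda_n,t_n}$ and replacing the projected functions by their unprojected counterparts modulo the harmonic remainders controlled in Lemma \ref{lemP2} (respectively its $p_0<n/(n-2)$ analogue) yields, in the limit, orthogonality relations that are exactly the nondegenerate pairings of Lemma \ref{lemnonde}; hence $\alpha^i_l=0$ and $(\psi^i_\infty,\phi^i_\infty)\equiv 0$.

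Finally, on $\Omega\setminus\bigcup_i B_r(\xi_{i,\epsilon_n})$ the potentials $V_i^{p_0-1},U_i^{q_0-1}$ decay to zero, so a standard $L^s$-estimate/elliptic iteration for $i^*$ upgrades the local vanishing established by the blow-up into global vanishing $\|(\psi_n,\phi_n)\|\to 0$, contradicting the normalization. The main obstacle is executing the Lagrange-multiplier and orthogonality steps uniformly in the two exponent regimes of \eqref{P}: when $p_0<n/(n-2)$ the slower $U_{0,1}$-decay from Lemma \ref{lemasym} prevents the harmonic correction in $PU_i$ from collapsing into the neat form $\propto H(\cdot,\xi)$ used in the classical case, and all interaction integrals must instead be controlled by the sharper bounds of Lemmas \ref{lemP'2}--\ref{lemP'2'}; the bulk of the work in the reduction is verifying that these weaker estimates still suffice to invert the Gram matrix and to extract the correct limiting orthogonality in the blow-up.
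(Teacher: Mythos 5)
Your proposal follows essentially the same route as the paper: argue by contradiction with a normalized sequence, show the Lagrange multipliers are negligible by testing against the projected kernel pair $(P\Phi_j^s,P\Psi_j^s)$ (the left-hand side indeed vanishes by the orthogonality defining $E_{\xi,\Lambda,t}$, and the Gram matrix is a perturbation of the non-degenerate pairing), then blow up at each $\xi_{i,\epsilon}$, identify the weak limit with an element of the kernel from Lemma \ref{lemnonde}, kill it with the rescaled orthogonality relations, and finally upgrade the local vanishing to $\|(\psi_m,\phi_m)\|\to 0$ (the paper does this through its Lemma \ref{lem0}, which is the same mechanism as your exterior-decay/elliptic step), with the two exponent regimes of \eqref{P} handled exactly by the pairs of lemmas you cite.

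One quantitative point needs tightening: concluding merely $c^{i,l}_n\to 0$ is not enough for the final contradiction, because $\|(P\Psi_i^l,P\Phi_i^l)\|$ and $\|U_i^{q_0-1}\Psi_i^l\|_{L^{\frac{q_0+1}{q_0}}}$ are of order $\delta_{i,\epsilon}^{-1}$, so the multiplier terms contribute $|c^{i,l}_n|\,\delta^{-1}$ to the norm of $(\psi_m,\phi_m)$; what is needed (and what the paper proves in its claim \eqref{claim1}) is $\sum_{i,l}|c_{i,l,m}|=o(\delta)$. This stronger rate does come out of your own Gram-matrix argument once the scalings are tracked: the diagonal pairings $\int_\Omega a\,\nabla P\Phi_{i}^l\cdot\nabla P\Psi_{j}^s$ are of size $\delta^{-2}$ times the non-degenerate constant, while all the error terms ($J_L$, the $\nabla a$ term, and the term involving $(h_{1,m},h_{2,m})$) are $o(\delta^{-1})$, which forces $c_{j,s,m}=o(\delta)$; equivalently, you may normalize the kernel elements to unit norm first, in which case $c\to 0$ is the correct statement. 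With that bookkeeping made explicit, your argument coincides with the paper's proof.
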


\begin{proof}
Suppose that \eqref{invert} is not true. There exist sequences $\{\epsilon_m\}_m$ of small positive numbers,
$$\{(\vec\xi_m,\vec \Lambda_m,\vec t_m)=(\xi_{1,m},\ldots,\xi_{\kappa,m},\Lambda_{1,m},\ldots,\Lambda_{\kappa,m},t_{1,m},\ldots,t_{\kappa,m}\}_m
\subset\Gamma,\ \ (\psi_m,\phi_m)\in E_{\xi_m,\Lambda_m,t_m}$$
and
$$(h_{1,m},h_{2,m}):=L_{\xi_m,\Lambda_m,t_m}(\psi_m,\phi_m)$$
such that $\epsilon_m\rightarrow0, (\vec\xi_m,\vec \Lambda_m,\vec t_m)\rightarrow(\vec\xi_\infty,\vec \Lambda_\infty,\vec t_\infty)\in\Gamma$ as $n\rightarrow\infty$,
$$\|(\psi_m,\phi_m)\|=1,\ \ \ \|(h_{1,m},h_{2,m})\|\rightarrow0,\ as\ n\rightarrow\infty.$$
Set $$\delta_{i,\epsilon}=\delta \Lambda_i,\ \ \ 
\eta=\min\Big\{dist(\xi_{i,\epsilon},\partial\Omega), \frac{|\xi_{i,\epsilon}-\xi_{j,\epsilon}|}2, i,j=1,\ldots,\kappa, i\neq j\Big\}.$$

Then from the assumptions, there exist some $c_{i,l,m}$ such that
\begin{align*}
(\psi_m,\phi_m)-i^*(q_0W_1^{q_0-1}\psi,p_0W_2^{p_0-1}\phi)=(h_{1,m},h_{2,m})+\sum_{i=1}^\kappa\sum_{l=0}^nc_{i,l,m}(P\Psi_{i,m}^l,P\Phi_{i,m}^l).
\end{align*}
In other words,
\begin{align}\label{eql}
\begin{cases}
&\displaystyle-div(a(x)\nabla\psi_m)=a(x)q_0W_2^{p_0-1}\phi_m-div(a(x)\nabla h_{1,m})-\sum_{i=1}^\kappa\sum_{l=0}^nc_{i,l,n}div(a(x)\nabla P\Psi_{i,m}^l)\\
&\displaystyle-div(a(x)\nabla\phi_m)=a(x)q_0W_1^{q_0-1}\psi_m-div(a(x)\nabla h_{2,m})-\sum_{i=1}^\kappa\sum_{l=0}^nc_{i,l,n}div(a(x)\nabla P\Phi_{i,m}^l).\\
\end{cases}
\end{align}

First, we claim
\begin{align}\label{claim1}
\sum_{i=1}^\kappa\sum_{l=0}^n|c_{i,l,n}|=o(\delta)=o(\epsilon^{\frac{n-1}{n-2}}).
\end{align}

In fact, for $j=1,\ldots,\kappa$ and $s=0,1,\dots,n$, we test \eqref{eql} with $(P\Phi_{j,m}^s,P\Psi_{j,m}^s)\in L^{p_0+1}(\Omega)\times L^{q_0+1}(\Omega)$ to get
\begin{align*}
&\int_\Omega a(x)\nabla P\Phi_{j,m}^s\nabla\psi_m+a(x)\nabla P\Psi_{j,m}^s\nabla\phi_m\\
=&\int_\Omega a(x)p_0W_2^{p_0-1}\phi_m P\Phi_{j,m}^s+q_0W_1^{q_0-1}\psi_m P\Psi_{j,m}^s\\
&+\int_\Omega a(x)\nabla P\Phi_{j,m}^s\nabla h_{1,m}+a(x)\nabla P\Psi_{j,m}^s\nabla h_{2,m}\\
&+\sum_{i=1}^\kappa\sum_{l=0}^nc_{i,l,n}\int_\Omega a(x)(\nabla P\Phi_{i,m}^l\nabla P\Psi_{j,m}^s+\nabla P\Psi_{i,m}^l\nabla P\Phi_{j,m}^s).
\end{align*}
Using \eqref{eqP}, we obtain
\begin{align*}
\begin{split}
&\int_\Omega a(x)p_0\phi_mP\Phi_{j,m}^s \left(V_j^{p_0-1}-W_2^{p_0-1}\right)
+a(x)q_0\psi_mP\Psi_{j,m}^s\left(U_j^{q_0-1}-W_1^{q_0-1} \right)\\
=&\int_\Omega \nabla a(x)(\nabla P\Psi_{j,m}^s\phi_m+\nabla P\Phi_{j,m}^s\psi_m)
+\int_\Omega a(x)(\nabla P\Phi_{j,m}^s\nabla h_{1,m}+\nabla P\Psi_{j,m}^s\nabla h_{2,m})\\
&+\sum_{i=1}^\kappa\sum_{l=0}^nc_{i,l,n}\int_\Omega a(x)(\nabla P\Phi_{i,m}^l\nabla P\Psi_{j,m}^s+\nabla P\Psi_{i,m}^l\nabla P\Phi_{j,m}^s).
\end{split}
\end{align*}
We denote the left-hand side by $J_L$, and the first, second and the third integral by $J_1,J_2$ and $J_3$ respectively, and estimate each one as follows.
%For simplicity, we denote $\delta_{i,\epsilon}=\delta \Lambda_i=\epsilon^{\frac{n-1}{n-2}}\Lambda_i$.
\medskip

First, we deal with  $J_L$ in the case of $p_0>\frac n{n-2}$.

In fact,
\begin{align*}
\begin{split}
&\int_\Omega a(x)p_0\phi_mP\Phi_{j,m}^s \left(V_j^{p_0-1}-W_2^{p_0-1}\right)
+a(x)q_0\psi_mP\Psi_{j,m}^s\left(U_j^{q_0-1}-W_1^{q_0-1} \right)\\
&\leq C\|(\psi_m,\phi_m)\|\Big(\Big\|P\Phi_{j,m}^s \left(V_j^{p_0-1}-W_2^{p_0-1}\right)\Big\|_{L^{\frac{p_0+1}{p_0}}(\Omega)}
+\Big\|P\Psi_{j,m}^s \left(U_j^{q_0-1}-W_1^{q_0-1}\right)\Big\|_{L^{\frac{q_0+1}{q_0}}(\Omega)}\Big)\\
&\leq C\sum_{l=1}^\kappa\Big(\Big\|P\Phi_{j,m}^s \left(V_j^{p_0-1}-W_2^{p_0-1}\right)\Big\|_{L^{\frac{p_0+1}{p_0}}(B_\eta(\xi_{l,m}))}\\&\quad
+\Big\|P\Psi_{j,m}^s \left(U_j^{q_0-1}-W_1^{q_0-1}\right)\Big\|_{L^{\frac{q_0+1}{q_0}}(B_\eta(\xi_{l,m}))}\Big) +\delta^{-1}O\Big(\Big(\frac{\delta}{\eta}\Big)^{\frac{np_0}{q_0+1}}+\Big(\frac{\delta}{\eta}\Big)^{\frac{nq_0}{p_0+1}}\Big).
\end{split}\end{align*}
For $l\neq j$,
\begin{align}\label{JL1}
\begin{split}
&\Big\|P\Phi_{j,m}^s \left(V_j^{p_0-1}-W_2^{p_0-1}\right)\Big\|_{L^{\frac{p_0+1}{p_0}}(B_\eta(\xi_{l,m}))}
+\Big\|P\Psi_{j,m}^s \left(U_j^{q_0-1}-W_1^{q_0-1}\right)\Big\|_{L^{\frac{q_0+1}{q_0}}(B_\eta(\xi_{l,m}))}\\
\leq& C\|V_l^{p_0-1}\|_{L^{\frac{p_0+1}{p_0-1}}(B_\eta(\xi_{l,m}))}\|P\Phi_{j,m}^s\|_{L^{p_0+1}(B_\eta(\xi_{l,m}))}\\&\quad
+C\|U_l^{q_0-1}\|_{L^{\frac{q_0+1}{q_0-1}}(B_\eta(\xi_{l,m}))}\|P\Psi_{j,m}^s\|_{L^{q_0+1}(B_\eta(\xi_{l,m}))}+\delta^{-1}O\Big(\Big(\frac{\delta}{\eta}\Big)^{\frac{np_0}{q_0+1}}+\Big(\frac{\delta}{\eta}\Big)^{\frac{nq_0}{p_0+1}}\Big)\\
\leq& C%\Big(\int_{B_\eta(\xi_{l,m})}\frac{\delta^{-n}}{}dx\Big)^{\frac{p_0-1}{p_0+1}}
\delta^{-1}
O\Big(\Big(\frac{\delta}{\eta}\Big)^{(n-2)(p_0+1)-n}+\Big(\frac{\delta}{\eta}\Big)^{(n-2)(q_0+1)-n}\Big)
+\delta^{-1}O\Big(\Big(\frac{\delta}{\eta}\Big)^{\frac{np_0}{q_0+1}}+\Big(\frac{\delta}{\eta}\Big)^{\frac{nq_0}{p_0+1}}\Big)
\\=&o(\delta^{-1}).
\end{split}
\end{align}
While for $l=j$,
\begin{align}\label{JL2}
\begin{split}
&\Big\|P\Phi_{j,m}^s \left(V_j^{p_0-1}-W_2^{p_0-1}\right)\Big\|_{L^{\frac{p_0+1}{p_0}}(B_\eta(\xi_{j,m}))}
+\Big\|P\Psi_{j,m}^s \left(U_j^{q_0-1}-W_1^{q_0-1}\right)\Big\|_{L^{\frac{q_0+1}{q_0}}(B_\eta(\xi_{j,m}))}\\
\leq& C\sum_{i\neq j}\|V_i^{p_0-1}\|_{L^{\frac{p_0+1}{p_0-1}}(B_\eta(\xi_{j,m}))}\|P\Phi_{j,m}^s\|_{L^{p_0+1}(B_\eta(\xi_{j,m}))}\\&
+C\sum_{i\neq j}\|U_i^{q_0-1}\|_{L^{\frac{q_0+1}{q_0-1}}(B_\eta(\xi_{j,m}))}\|P\Psi_{j,m}^s\|_{L^{q_0+1}(B_\eta(\xi_{j,m}))}+\delta^{-1}\Big(\Big(\frac{\delta}{\eta}\Big)^{\frac{np_0}{q_0+1}}+\Big(\frac{\delta}{\eta}\Big)^{\frac{nq_0}{p_0+1}}\Big)\\
\leq& C%\Big(\int_{B_\eta(\xi_{l,m})}\frac{\delta^{-n}}{}dx\Big)^{\frac{p_0-1}{p_0+1}}
\delta^{-1}O\Big(\Big(\frac{\delta}{\eta}\Big)^{(n-2)(p_0+1)-n}+\Big(\frac{\delta}{\eta}\Big)^{(n-2)(q_0+1)-n}\Big)+\delta^{-1}O\Big(\Big(\frac{\delta}{\eta}\Big)^{\frac{np_0}{q_0+1}}+\Big(\frac{\delta}{\eta}\Big)^{\frac{nq_0}{p_0+1}}\Big)
\\=&o(\delta^{-1}).
\end{split}
\end{align}

While in the case of $p_0<\frac n{n-2}$, we have
\begin{align*}
\begin{split}
&\int_\Omega a(x)p_0\phi_mP\Phi_{j,m}^s \left(V_j^{p_0-1}-W_2^{p_0-1}\right)
+a(x)q_0\psi_mP\Psi_{j,m}^s\left(U_j^{q_0-1}-W_1^{q_0-1} \right)\\
%&\leq C\|(\psi_m,\phi_m)\|\Big(\Big\|P\Phi_{j,m}^s \left(V_j^{p_0-1}-W_2^{p_0-1}\right)\Big\|_{L^{\frac{p_0+1}{p_0}}(\Omega)}
%+\Big\|P\Psi_{j,m}^s \left(U_j^{q_0-1}-W_1^{q_0-1}\right)\Big\|_{L^{\frac{q_0+1}{q_0}}(\Omega)}\Big)\\
&\leq C\sum_{l=1}^\kappa\Big(\Big\|P\Phi_{j,m}^s \left(V_j^{p_0-1}-W_2^{p_0-1}\right)\Big\|_{L^{\frac{p_0+1}{p_0}}(B_\eta(\xi_{l,m}))}\\&\quad
+\Big\|P\Psi_{j,m}^s \left(U_j^{q_0-1}-W_1^{q_0-1}\right)\Big\|_{L^{\frac{q_0+1}{q_0}}(B_\eta(\xi_{l,m}))}\Big)
+\delta^{-1}O\Big(\Big(\frac{\delta}{\eta}\Big)^{\frac{np_0}{q_0+1}}+\Big(\frac{\delta}{\eta}\Big)^{\frac{np_0q_0}{q_0+1}}\Big).
\end{split}\end{align*}
For $l\neq j$,
\begin{align}\label{JL1}
\begin{split}
&\Big\|P\Phi_{j,m}^s \left(V_j^{p_0-1}-W_2^{p_0-1}\right)\Big\|_{L^{\frac{p_0+1}{p_0}}(B_\eta(\xi_{l,m}))}
+\Big\|P\Psi_{j,m}^s \left(U_j^{q_0-1}-W_1^{q_0-1}\right)\Big\|_{L^{\frac{q_0+1}{q_0}}(B_\eta(\xi_{l,m}))}\\
\leq& C\|V_l^{p_0-1}\|_{L^{\frac{p_0+1}{p_0-1}}(B_\eta(\xi_{l,m}))}\|P\Phi_{j,m}^s\|_{L^{p_0+1}(B_\eta(\xi_{l,m}))}\\&\quad
+C\|U_l^{q_0-1}\|_{L^{\frac{q_0+1}{q_0-1}}(B_\eta(\xi_{l,m}))}\|P\Psi_{j,m}^s\|_{L^{q_0+1}(B_\eta(\xi_{l,m}))}
+\delta^{-1}O\Big(\Big(\frac{\delta}{\eta}\Big)^{\frac{np_0}{q_0+1}}+\Big(\frac{\delta}{\eta}\Big)^{\frac{np_0q_0}{q_0+1}}\Big)\\
\leq& C%\Big(\int_{B_\eta(\xi_{l,m})}\frac{\delta^{-n}}{}dx\Big)^{\frac{p_0-1}{p_0+1}}
\delta^{-1}O\Big(\Big(\frac{\delta}{\eta}\Big)^{\frac{n}{q_0+1}}+\Big(\frac{\delta}{\eta}\Big)^{\frac{np_0}{q_0+1}}\Big)
+\delta^{-1}O\Big(\Big(\frac{\delta}{\eta}\Big)^{\frac{np_0}{q_0+1}}+\Big(\frac{\delta}{\eta}\Big)^{\frac{np_0q_0}{q_0+1}}\Big)
\\=&o(\delta^{-1}).
\end{split}
\end{align}
While for $l=j$,
\begin{align}\label{JL2}
\begin{split}
&\Big\|P\Phi_{j,m}^s \left(V_j^{p_0-1}-W_2^{p_0-1}\right)\Big\|_{L^{\frac{p_0+1}{p_0}}(B_\eta(\xi_{j,m}))}
+\Big\|P\Psi_{j,m}^s \left(U_j^{q_0-1}-W_1^{q_0-1}\right)\Big\|_{L^{\frac{q_0+1}{q_0}}(B_\eta(\xi_{j,m}))}\\
\leq& C\sum_{i\neq j}\|V_i^{p_0-1}\|_{L^{\frac{p_0+1}{p_0-1}}(B_\eta(\xi_{j,m}))}\|P\Phi_{j,m}^s\|_{L^{p_0+1}(B_\eta(\xi_{j,m}))}\\&
+C\sum_{i\neq j}\|U_i^{q_0-1}\|_{L^{\frac{q_0+1}{q_0-1}}(B_\eta(\xi_{j,m}))}\|P\Psi_{j,m}^s\|_{L^{q_0+1}(B_\eta(\xi_{j,m}))}\\&
+\delta^{-1}O\Big(\Big(\frac{\delta}{\eta}\Big)^{\frac{n(p_0-1)}{q_0+1}}
+\Big(\frac{\delta}{\eta}\Big)^{n-2}+\Big(\frac{\delta}{\eta}\Big)^{\frac{np_0(q_0-1)}{q_0+1}}+\Big(\frac{\delta}{\eta}\Big)^{\frac{n(p_0+1)}{q_0+1}}\Big)
+\delta^{-1}O\Big(\Big(\frac{\delta}{\eta}\Big)^{\frac{np_0}{q_0+1}}+\Big(\frac{\delta}{\eta}\Big)^{\frac{np_0q_0}{q_0+1}}\Big)\\
=&\delta^{-1}O\Big(\Big(\frac{\delta}{\eta}\Big)^{\frac{n(p_0-1)}{q_0+1}}
+\Big(\frac{\delta}{\eta}\Big)^{n-2}+\Big(\frac{\delta}{\eta}\Big)^{\frac{np_0(q_0-1)}{q_0+1}}+\Big(\frac{\delta}{\eta}\Big)^{\frac{n(p_0+1)}{q_0+1}}\Big)
\\=&o(\delta^{-1}),
\end{split}
\end{align}
where we have used the estimate \eqref{hi}.% that $h_j(x)\leq \frac {\delta^{\frac{ Cp_0n}{q_0+1}}}{\eta^{(n-2)p_0-2}}$.

\medskip
Next, for $J_1$ and $J_2$, considering the proof of  Lemma \ref{lemP'} (for $p_0>\frac n{n-2}$) and Lemma \ref{lemP'2} (for $p_0<\frac n{n-2}$), we can obtain that
\begin{align}\label{J1}
\begin{split}
&\int_\Omega \nabla a(x)(\nabla P\Psi_{j,m}^s\phi_m+\nabla P\Phi_{j,m}^s\psi_m)\\
\leq &C\left(\|\nabla P\Psi_{j,m}^s\|_{L^{\frac{p_0+1}{p_0}}}\|\phi_m\|_{L^{p_0+1}}+\|\nabla P\Phi_{j,m}^s\|_{L^{\frac{q_0+1}{q_0}}}\|\psi_m\|_{L^{q_0+1}}\right)\\
=&o(\delta^{-1}).
\end{split}
\end{align}
By the assumption, $\|(h_{1,m},h_{2,m})\|=o(1)$ and then
\begin{align}\label{J2}
\begin{split}
&\int_\Omega a(x)(\nabla P\Phi_{j,m}^s\nabla h_{1,m}+\nabla P\Psi_{j,m}^s\nabla h_{2,m})\\
\leq& C\left(\|\nabla P\Psi_{j,m}^s\|_{L^{p^*}}\|\nabla h_{2,m}\|_{L^{q^*}}+\|\nabla P\Phi_{j,m}^s\|_{L^{q^*}}\|h_{1,m}\|_{L^{q^*}}\right)\\
=&o(\delta^{-1}).
\end{split}
\end{align}

On  the other hand, for $J_3$, we have
\begin{align}\label{J3}
\begin{split}
&\sum_{i=1}^\kappa\sum_{l=0}^nc_{i,l,n}\int_\Omega a(x)(\nabla P\Phi_{i,m}^l\nabla P\Psi_{j,m}^s+\nabla P\Psi_{i,m}^l\nabla P\Phi_{j,m}^s)\\
 =&-\sum_{i=1}^\kappa\sum_{l=0}^nc_{i,l,n}\int_\Omega\nabla a(x)(\nabla P\Phi_{i,m}^lP\Psi_{j,m}^s+\nabla P\Psi_{i,m}^l P\Phi_{j,m}^s)\\
&+\sum_{i=1}^\kappa\sum_{l=0}^nc_{i,l,n}\Big(\int_\Omega a(x)(-\Delta P\Phi_{i,m}^l P\Psi_{j,m}^s-\Delta P\Psi_{i,m}^lP\Phi_{j,m}^s)+o(1)\Big)\\
 =&-\sum_{i=1}^\kappa\sum_{l=0}^nc_{i,l,n}\int_\Omega\nabla a(x)(\nabla P\Phi_{i,m}^lP\Psi_{j,m}^s+\nabla P\Psi_{i,m}^l P\Phi_{j,m}^s)\\
&+\delta^{-2}\sum_{i=1}^\kappa\sum_{l=0}^nc_{i,l,n}\Big(\mathbb\delta_{ij}\mathbb\delta_{ls}\Lambda_j^{-2}\int_\Omega a(x)
(q_0U^{q_0-1}\Psi^s+p_0V^{p_0-1}P\Phi^s)+o(1)\Big)\\
 =&o(\delta^{-1})+\delta^{-2}\sum_{i=1}^\kappa\sum_{l=0}^nc_{i,l,n}\Big(\mathbb\delta_{ij}\mathbb\delta_{ls}\Lambda_j^{-2}\int_\Omega a(x)
(q_0U^{q_0-1}\Psi^s+p_0V^{p_0-1}P\Phi^s)+o(1)\Big).
\end{split}
\end{align}
Hence combining \eqref{JL1}, \eqref{JL2}, \eqref{J1}, \eqref{J2} and \eqref{J3}, we have shown \eqref{claim1}.

\vskip 0.5cm

\begin{Lem}\label{lem0}
There holds that
\begin{align*}
 \|W_2^{p_0-1}\phi_m \|_{L^{\frac{p_0+1}{p_0}}}+ \|W_1^{q_0-1}\psi_m \|_{L^{\frac{q_0+1}{q_0}}}\rightarrow0\ \ as\ \ m\rightarrow\infty.
\end{align*}

\end{Lem}

\begin{proof}
We only sketch the steps of the proof, which is similar to \cite{KP}.

Step 1. We take a smooth cut-off function
\begin{align*}
\chi(x)=\begin{cases}1,\ &in\ B_\rho(\xi_{l}),\\
0,\  &in\ \Omega\setminus  B_{2\rho}(\xi_{l}),
\end{cases}
\ \ \ \ \ \ |\nabla\chi(x)|\leq\frac2\rho,\ \ \ \ \  |\nabla^2\chi(x)|\leq\frac4{\rho^2}.
\end{align*}
Then for $y\in\Omega_{l,m}:=\frac{\Omega-\xi_{l,m}}{\delta_{l,m}}$,  we set
\begin{align*}
(\tilde\psi_m(y),\tilde\phi_m(y))=\left(\delta_{l,m}^{\frac n{q_0+1}}(\chi\psi_m)(\delta_{l,m}y+\xi_{l,m}),\delta_{l,m}^{\frac n{p_0+1}}(\chi\phi_m)(\delta_{l,m}y+\xi_{l,m})\right),
\end{align*}
which satisfies
\begin{align*}
(\tilde\psi_m(y),\tilde\phi_m(y))\rightarrow(\tilde\psi(y),\tilde\phi(y))
\end{align*}
weakly in $W_0^{1,p^*}(\R^n)\times W_0^{1,q^*}(\R^n)$, strongly in $L^{q+1}(\R^n)\times L^{p_0+1}(\R^n)$ and almost everywhere in $\R^n$.

Step 2. Taking $m\rightarrow\infty$ in \eqref{eql} and considering \eqref{claim1}, there holds that $(\tilde\psi,\tilde\phi)$ solves \eqref{eqlinear}.
Then applying Lemma \ref{lemnonde}, we obtain  $(\tilde\psi,\tilde\phi)=0$.

Step 3. We prove \begin{align*}
 \|W_2^{p_0-1}\phi_m \|_{L^{\frac{p_0+1}{p_0}}}\rightarrow0\ \ as\ \ m\rightarrow\infty,
\end{align*}
and
\begin{align*}
 \|W_1^{q_0-1}\psi_m \|_{L^{\frac{q_0+1}{q_0}}}\rightarrow0\ \ as\ \ m\rightarrow\infty.
\end{align*}
\end{proof}

Finally, we complete the proof of Proposition \ref{propL}. In fact, by the assumptions, Claim \eqref{claim1} and Lemma \ref{lem0}, then up to a subsequence,
\begin{align*}
1=\|(\psi_m,\phi_m)\|\leq& C \Big(\|W_2^{p_0-1}\phi_m \|_{L^{\frac{p_0+1}{p_0}}}+ \|W_1^{q_0-1}\psi_m \|_{L^{\frac{q_0+1}{q_0}}}\\&
\qquad+\|(h_{1,m},h_{2,m})\|+\delta^{-1}\sum_{i=1}^\kappa\sum_{l=0}^n|c_{i,l,n}|\Big)\rightarrow0,\ \ as\ m\rightarrow\infty,
\end{align*}
which is a contradiction, concluding the proof.

\end{proof}

                   %%%%%%%%%%%%%%%%%%%%%%%%%%%%%%%%%%%%%%%%%%%%%%%%%%%%%%%%%%%%%%%%%%%%%%%%%%%%%%%%%%%%%%%%%%%%%%%%%%%%%%%%%%%%%%%%%%%%%%前面等价范数可以再细一点解释

In view of Proposition \ref{propL}, the standard Fredholm alternative gives that
\begin{Cor} \label{corlinear}
For any $\epsilon\in(0,\epsilon_0)$, $(\vec\xi,\vec\Lambda,\vec t)\in\Gamma$ and $(h_1,h_2)\in E_{\xi,\Lambda,t}$, there exists solution
$(\psi,\phi)\in E_{\xi,\Lambda,t}$ to the linear problem $$L_{\xi,\Lambda,t}(\psi,\phi)=(h_1,h_2).$$
Moreover, $\|(h_1,h_2)\|\geq C\|(\psi,\phi)\|$.

\end{Cor}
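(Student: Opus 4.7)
The plan is to derive the corollary from Proposition \ref{propL} via the Fredholm alternative applied to the operator $L_{\xi,\Lambda,t}=\mathrm{Id}-T_{\xi,\Lambda,t}$ on $E_{\xi,\Lambda,t}$, where
\[
T_{\xi,\Lambda,t}(\psi,\phi):=\Pi^\bot_{\xi,\Lambda,t}\, i^*\bigl(q_0 W_1^{q_0-1}\psi,\,p_0 W_2^{p_0-1}\phi\bigr).
\]
First I would verify that $T_{\xi,\Lambda,t}$ is a compact linear operator on $E_{\xi,\Lambda,t}$. The projection $\Pi^\bot_{\xi,\Lambda,t}$ is bounded on $X_{p,q}$, and $i^*$ is a continuous map $L^{\frac{q_0+1}{q_0}}(\Omega)\times L^{\frac{p_0+1}{p_0}}(\Omega)\to X_{p_0,q_0}\hookrightarrow X_{p,q}$ by Calder\'on--Zygmund theory together with the embedding \eqref{emb}. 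It therefore suffices to show that the multiplication operator $M_W\colon (\psi,\phi)\mapsto(q_0 W_1^{q_0-1}\psi,\,p_0 W_2^{p_0-1}\phi)$ is compact as a map $X_{p,q}\to L^{\frac{q_0+1}{q_0}}(\Omega)\times L^{\frac{p_0+1}{p_0}}(\Omega)$. This follows from H\"older's inequality using the integrability $W_1^{q_0-1}\in L^{\frac{q_0+1}{q_0-1}}$, $W_2^{p_0-1}\in L^{\frac{p_0+1}{p_0-1}}$ (which are finite quantities for fixed $\epsilon$ since $W_1,W_2$ are finite sums of bubbles), combined with the compact Rellich--Kondrachov embedding of $W_0^{1,p^*}\times W_0^{1,q^*}$ into $L^{q_0+1-\theta}(\Omega)\times L^{p_0+1-\theta}(\Omega)$ for any small $\theta>0$.

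Next I would invoke Proposition \ref{propL}, which provides the a priori estimate $\|L_{\xi,\Lambda,t}(\psi,\phi)\|\geq C\|(\psi,\phi)\|$ for every $(\psi,\phi)\in E_{\xi,\Lambda,t}$, uniformly for $(\vec\xi,\vec\Lambda,\vec t)$ in a compact subset of $\Gamma$ and $\epsilon\in(0,\epsilon_0)$. This estimate immediately implies that $L_{\xi,\Lambda,t}$ is injective on $E_{\xi,\Lambda,t}$. By the classical Fredholm alternative for compact perturbations of the identity, the injectivity of $\mathrm{Id}-T_{\xi,\Lambda,t}$ forces its surjectivity onto $E_{\xi,\Lambda,t}$. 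Consequently, for any $(h_1,h_2)\in E_{\xi,\Lambda,t}$ there exists a unique $(\psi,\phi)\in E_{\xi,\Lambda,t}$ solving $L_{\xi,\Lambda,t}(\psi,\phi)=(h_1,h_2)$.

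Finally, applying Proposition \ref{propL} to this particular solution yields $\|(h_1,h_2)\|=\|L_{\xi,\Lambda,t}(\psi,\phi)\|\geq C\|(\psi,\phi)\|$, which is precisely the bound claimed in the corollary. The only delicate point in this program is establishing the compactness of $M_W$ within the non-Hilbertian framework of $X_{p,q}$: one must exploit the strict gap in the Sobolev exponents recorded in \eqref{emb} together with the explicit integrability properties of the bubbles $U_i,V_i$ near and away from the concentration points $\xi_{i,\epsilon}$, so that the composition $i^*\circ M_W$ is genuinely compact rather than merely bounded. Once compactness is in hand, the rest is a direct consequence of Proposition \ref{propL} and the abstract Fredholm theory.
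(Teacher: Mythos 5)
Your proposal is correct and follows essentially the same route as the paper, which deduces the corollary from Proposition \ref{propL} via the standard Fredholm alternative for $L_{\xi,\Lambda,t}=\mathrm{Id}-T_{\xi,\Lambda,t}$ with $T_{\xi,\Lambda,t}$ compact. The compactness verification you sketch (boundedness of $\Pi^\bot_{\xi,\Lambda,t}$ and $i^*$, plus H\"older with the slightly subcritical Rellich--Kondrachov embedding, using that $W_1,W_2$ are fixed bounded profiles for fixed $\epsilon$) is exactly the detail the paper leaves implicit, and the injectivity plus a priori bound from Proposition \ref{propL} then give existence, uniqueness and the estimate as you state.
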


\medskip

\subsection{Nonlinear Problem}

Applying Corollary \ref{corlinear}, we consider problem \eqref{bot}.
First, we rewrite \eqref{bot} as
\begin{align}\label{fixed}
\begin{split}
(\psi,\phi)=T_{\xi,\Lambda,t}(\psi,\phi):=L_{\xi,\Lambda,t}^{-1}\left(-R_\epsilon+N_\epsilon(\psi,\phi)\right),
\end{split}
\end{align}
where
\begin{align}\label{R}
\begin{split}
R_\epsilon=\Pi_{\xi,\Lambda,t}^\bot((W_1,W_2)-i^*(W_1^q,W_2^p))
\end{split}
\end{align}
and
\begin{align}\label{N}
\begin{split}
N_\epsilon(\psi,\phi)=\Pi_{\xi,\Lambda,t}^\bot i^*(N_{1,\epsilon}(\phi),N_{2,\epsilon}(\psi)),
\end{split}
\end{align}
with
\begin{align*}
\begin{split}
&N_{1,\epsilon}(\psi)=|W_1+\psi|^{q-1}(W_1+\psi)-W_1^q-q_0W_1^{q_0-1}\psi,\\
&N_{2,\epsilon}(\phi)=|W_2+\phi|^{p-1}(W_2+\phi)-W_2^p-p_0W_2^{p_0-1}\phi.
\end{split}
\end{align*}

\medskip

Next, we estimate the errors.
\begin{Lem}\label{lemerror}
For $\epsilon\in(0,\epsilon_0)$, there exists some $\sigma>0$ such that
\begin{align}
\|R_\epsilon\|=O\Big(\epsilon^{\frac12+\sigma}
\Big).
\end{align}

\end{Lem}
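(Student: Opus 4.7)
The plan is to drop the projection at once---since $\Pi^\bot_{\xi,\Lambda,t}$ is continuous on $X_{p,q}$, one has $\|R_\epsilon\|\le C\|\mathcal{E}\|$ with
$$\mathcal{E}:=(W_1,W_2)-i^*(W_1^q,W_2^p),$$
and then control $\mathcal{E}$ directly. Using $-\Delta PU_i=V_i^{p_0}$, $-\Delta PV_i=U_i^{q_0}$ together with
$$-\operatorname{div}(a\nabla W_1)=a\sum_{i=1}^\kappa V_i^{p_0}-\nabla a\cdot\nabla W_1$$
(and the analogous identity for $W_2$), the definition of $i^*$ yields $\mathcal{E}=i^*(E_1,E_2)$ with
\begin{align*}
E_1 &= \sum_{i=1}^\kappa U_i^{q_0}-W_1^q-a^{-1}\nabla a\cdot\nabla W_2,\\
E_2 &= \sum_{i=1}^\kappa V_i^{p_0}-W_2^p-a^{-1}\nabla a\cdot\nabla W_1.
\end{align*}
Calder\'on--Zygmund continuity of $i^*$ from $L^{(q_0+1)/q_0}\times L^{(p_0+1)/p_0}$ into $X_{p_0,q_0}$, Sobolev embedding, and the uniform positivity of $a$ on $\overline{\Omega}$ then reduce the statement to the bound
$$\|E_1\|_{L^{(q_0+1)/q_0}(\Omega)}+\|E_2\|_{L^{(p_0+1)/p_0}(\Omega)}=O(\epsilon^{1/2+\sigma}).$$

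Next I would split each $E_\ell$ into three natural contributions: an \emph{exponent-drift} part (e.g.\ $\sum_i(V_i^{p_0}-V_i^p)$), a \emph{nonlinear interaction/projection} part ($\sum_iV_i^p-W_2^p$), and a \emph{weight correction} ($a^{-1}\nabla a\cdot\nabla W_\ell$). For the exponent drift, the mean value theorem gives $|V_i^{p_0}-V_i^p|\le C\epsilon\,V_i^{p_0}(1+|\log V_i|)$; the rescaling $y=\delta_{i,\epsilon}^{-1}(x-\xi_{i,\epsilon})$ shows $\|V_i^{p_0}(1+|\log V_i|)\|_{L^{(p_0+1)/p_0}}=O(|\log\epsilon|)$, so this piece is $O(\epsilon|\log\epsilon|)$, which is dominated by $\epsilon^{1/2+\sigma}$ for any $\sigma\in(0,1/2)$. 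The weight part is handled directly by Lemmas \ref{lemP'} and \ref{lemP'2'}, which already produce an $O(\epsilon^{1+\sigma})$ bound on $\|\nabla W_\ell\|$ in the relevant dual norm and hence a contribution of strictly higher order.

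The main obstacle is the interaction/projection term. For $p\ge 2$ I would use $\bigl|(\sum_iA_i)^p-\sum_iA_i^p\bigr|\le C\sum_{i\ne j}A_i^{p-1}A_j$ (with a convex-combination variant when $1<p<2$) to reduce to the two-bubble integrals $\int V_i^{p-1}V_j\,dx$ and $\int U_i^{q-1}U_j\,dx$, which are small because the centers $\xi_{i,\epsilon}$ stay uniformly separated; the remaining error $(\sum_iV_i)^p-W_2^p$ is then expanded around $\sum_iV_i^p$ and controlled via the pointwise bounds on $V_i-PV_i$ and $U_i-PU_i$. The delicate point is that these last estimates are structurally different in the two regimes of \eqref{P}: when $p_0>n/(n-2)$, the boundary profile of $V_i-PV_i$ is governed by the regular part $H(\cdot,\xi_{i,\epsilon})$ through Lemma \ref{lemP}, whereas in the sub-$n/(n-2)$ range the harmonic function $h_i=U_i-PU_i$ is no longer controlled by $H$ but only by the two-scale bound \eqref{hi}--\eqref{hi'} from Lemma \ref{lemP'2}. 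The assumption $p_0>p_n$ in \eqref{P} is precisely what permits a careful choice of intermediate Lebesgue exponent---exactly as in the proof of Lemma \ref{lemP'2'}---to squeeze out the residual positive power $\epsilon^\sigma$ beyond $\epsilon^{1/2}$ in both regimes, and thus to close the estimate uniformly.
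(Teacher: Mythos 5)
Your proposal is correct and follows essentially the same route as the paper: the same reformulation $(W_1,W_2)=i^*\bigl(\sum_i U_i^{q_0}-\tfrac{\nabla a}{a}\cdot\nabla W_2,\ \sum_i V_i^{p_0}-\tfrac{\nabla a}{a}\cdot\nabla W_1\bigr)$, the same three-way split into bubble interaction/projection error, exponent drift (giving $O(\epsilon|\log\epsilon|)$), and the weight-gradient terms absorbed by Lemmas \ref{lemP'} and \ref{lemP'2'}, with the same two-regime treatment of the projection error via Lemma \ref{lemP} versus Lemma \ref{lemP'2} and the hypothesis $p_0>p_n$ supplying the admissible intermediate Lebesgue exponents. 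The only (immaterial) differences are that you peel off the drift on the individual bubbles rather than on $W_1,W_2$ and estimate the interaction term at exponent $p$ instead of $p_0$, which changes nothing since $p=p_0-\alpha\epsilon$.
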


\begin{proof}
From \eqref{R}, the equivalent weighted space and  the definition of $i^*$, we get that
$$(U_i,V_i)=i^*\Big(U_i^{q_0}-\frac{\nabla a}a\cdot\nabla V_i,V_i^{p_0}-\frac{\nabla a}a\cdot\nabla U_i\Big),$$
which means
$$(W_1,W_2)=i^*\Big(\sum_{i=1}^\kappa U_i^{q_0}-\frac{\nabla a}a\cdot\nabla W_2,\sum_{i=1}^\kappa V_i^{p_0}-\frac{\nabla a}a\cdot\nabla W_1\Big).$$
Hence, by Lemma \ref{lemP'} and Lemma \ref{lemP'2} for different $p_0$-range respectively,  there exists some small $\sigma>0$ such that
\begin{align}\label{4.12}
\begin{split}
\|R_\epsilon\|\leq &C\Big(\Big\|\sum_{i=1}^\kappa U_i^{q_0}-W_1^{q_0}\Big\|_{L^{\frac{q_0+1}{q_0}}(\Omega)}
+\Big\|W_1^q-W_1^{q_0}\Big\|_{L^{\frac{q_0+1}{q_0}}(\Omega)}
+\Big\|\nabla a\cdot\nabla W_2\Big\|_{L^{\frac{q_0+1}{q_0}}(\Omega)}\\&
+\Big\|\sum_{i=1}^\kappa V_i^{p_0}-W_2^{p_0}\Big\|_{L^{\frac{p_0+1}{p_0}}(\Omega)}
+\Big\|W_2^p-W_2^{p_0}\Big\|_{L^{\frac{p_0+1}{p_0}}(\Omega)}
+\Big\|\nabla a\cdot\nabla W_1\Big\|_{L^{\frac{p_0+1}{p_0}}(\Omega)}
\Big)\\
\leq & C\Big(\Big\|\sum_{i=1}^\kappa U_i^{q_0}-W_1^{q_0}\Big\|_{L^{\frac{q_0+1}{q_0}}(\Omega)}
+\Big\|W_1^q-W_1^{q_0}\Big\|_{L^{\frac{q_0+1}{q_0}}(\Omega)}\\&
+\Big\|\sum_{i=1}^\kappa V_i^{p_0}-W_2^{p_0}\Big\|_{L^{\frac{p_0+1}{p_0}}(\Omega)}
+\Big\|W_2^p-W_2^{p_0}\Big\|_{L^{\frac{p_0+1}{p_0}}(\Omega)}
\Big)
+O(\epsilon^{1+\sigma})\\
:=&C(I+J)+O(\epsilon^{1+\sigma}),
\end{split}
\end{align}
where we denote
\begin{align*}
\begin{split}
&I:=\Big\|\sum_{i=1}^\kappa U_i^{q_0}-W_1^{q_0}\Big\|_{L^{\frac{q_0+1}{q_0}}(\Omega)}
+\Big\|\sum_{i=1}^\kappa V_i^{p_0}-W_2^{p_0}\Big\|_{L^{\frac{p_0+1}{p_0}}(\Omega)}\\
&J:=\Big\|W_2^p-W_2^{p_0}\Big\|_{L^{\frac{p_0+1}{p_0}}(\Omega)}+\Big\|W_1^q-W_1^{q_0}\Big\|_{L^{\frac{q_0+1}{q_0}}(\Omega)}.
\end{split}
\end{align*}

Recall the notation $$\eta=\min\Big\{dist(\xi_{i,\epsilon},\partial\Omega), \frac{|\xi_{i,\epsilon}-\xi_{j,\epsilon}|}2, i,j=1,\ldots,\kappa, i\neq j\Big\}.$$
We estimate
\begin{align}\label{4.13}
\begin{split}
I=&\Big\|\sum_{i=1}^\kappa U_i^{q_0}-W_1^{q_0}\Big\|_{L^{\frac{q_0+1}{q_0}}(\Omega)}+\Big\|\sum_{i=1}^\kappa V_i^{p_0}-W_2^{p_0}\Big\|_{L^{\frac{p_0+1}{p_0}}(\Omega)}\\
\leq& C\Big(\sum_{j=1}^\kappa\Big\|\sum_{i=1}^\kappa U_i^{q_0}-W_1^{q_0}\Big\|_{L^{\frac{q_0+1}{q_0}}(B_\eta(\xi_{j,\epsilon}))}
+\Big\|\sum_{i=1}^\kappa U_i^{q_0}-W_1^{q_0}\Big\|_{L^{\frac{q_0+1}{q_0}}(\Omega\setminus \cup_{j=1}^\kappa B_\eta(\xi_{j,\epsilon}))}\\
&+\sum_{j=1}^\kappa\Big\|\sum_{i=1}^\kappa V_i^{p_0}-W_2^{p_0}\Big\|_{L^{\frac{p_0+1}{p_0}}(B_\eta(\xi_{j,\epsilon}))}
+\Big\|\sum_{i=1}^\kappa V_i^{p_0}-W_2^{p_0}\Big\|_{L^{\frac{p_0+1}{p_0}}(\Omega\setminus \cup_{j=1}^\kappa B_\eta(\xi_{j,\epsilon}))}\Big).
\end{split}
\end{align}
\medskip

First, for the norm in the external area, if $p_0>\frac n{n-2}$
\begin{align}\label{4.14}
\begin{split}
&\Big\|\sum_{i=1}^\kappa U_i^{q_0}-W_1^{q_0}\Big\|_{L^{\frac{q_0+1}{q_0}}(\Omega\setminus \cup_{j=1}^\kappa B_\eta(\xi_{j,\epsilon}))}\\
\leq & C\Big(\sum_{l=1}^\kappa \int_{\Omega\setminus \cup_{j=1}^\kappa B_\eta(\xi_{j,\epsilon})} U_l^{q_0+1}\Big)^{\frac{q_0}{q_0+1}}=O\Big(\left(\frac{\delta}{\eta}\right)^{q_0(n-2)-\frac{nq_0}{q_0+1}}\Big)=O(\epsilon^{\frac12+\sigma})
\end{split}
\end{align}
and similarly,
\begin{align}\label{4.15}
\begin{split}
&\Big\|\sum_{i=1}^\kappa V_i^{p_0}-W_2^{p_0}\Big\|_{L^{\frac{p_0+1}{p_0}}(\Omega\setminus \cup_{j=1}^\kappa B_\eta(\xi_{j,\epsilon}))}=O\Big(\left(\frac{\delta}{\eta}\right)^{p_0(n-2)-\frac{np_0}{p_0+1}}\Big)
=O(\epsilon^{\frac12+\sigma}),
\end{split}
\end{align}
where we used the fact that $q_0,p_0>\frac n{n-2}$.

While if $p_0<\frac n{n-2}$, since $\frac{np_0q_0}{q_0+1}>\frac{(n-2)p_0-2}{2}$, we have
\begin{align}\label{4.14'}
\begin{split}
&\Big\|\sum_{i=1}^\kappa U_i^{q_0}-W_1^{q_0}\Big\|_{L^{\frac{q_0+1}{q_0}}(\Omega\setminus \cup_{j=1}^\kappa B_\eta(\xi_{j,\epsilon}))}\\
\leq & C\Big(\sum_{l=1}^\kappa \int_{\Omega\setminus \cup_{j=1}^\kappa B_\eta(\xi_{j,\epsilon})} U_l^{q_0+1}\Big)^{\frac{q_0}{q_0+1}}\\ =&O\Big(\left(\frac{\delta}{\eta}\right)^{q_0((n-2)p_0-2)-\frac{nq_0}{q_0+1}}\Big)
=O\Big(\left(\frac{\delta}{\eta}\right)^{\frac{np_0q_0}{q_0+1}}\Big)
=O(\epsilon^{\frac12+\sigma}).
\end{split}
\end{align}
Moreover, since $p_0>1,\frac{p_0}{p_0+1}>\frac12$, then
\begin{align}\label{4.15'}
\begin{split}
&\Big\|\sum_{i=1}^\kappa V_i^{p_0}-W_2^{p_0}\Big\|_{L^{\frac{p_0+1}{p_0}}(\Omega\setminus \cup_{j=1}^\kappa B_\eta(\xi_{j,\epsilon}))}\\&=O\Big(\left(\frac{\delta}{\eta}\right)^{p_0(n-2)-\frac{np_0}{p_0+1}}\Big)
=O\Big(\left(\frac{\delta}{\eta}\right)^{\frac{np_0}{q_0+1}}\Big)=O(\epsilon^{\frac12+\sigma}).
\end{split}
\end{align}
\medskip

Next,  for the norm in the internal area $B_\eta(\xi_{j,\epsilon})$ in \eqref{4.13}, we estimate subtly as follows.
\begin{align}\label{4.16}
\begin{split}
&\Big\|\sum_{i=1}^\kappa U_i^{q_0}-W_1^{q_0}\Big\|_{L^{\frac{q_0+1}{q_0}}(B_\eta(\xi_{j,\epsilon}))}+\Big\|\sum_{i=1}^\kappa V_i^{p_0}-W_2^{p_0}\Big\|_{L^{\frac{p_0+1}{p_0}}( B_\eta(\xi_{j,\epsilon}))}\\
\leq & \Big\|\sum_{i=1}^\kappa U_i^{q_0}-\Big(\sum_{i=1}^\kappa U_i\Big)^{q_0}\Big\|_{L^{\frac{q_0+1}{q_0}}( B_\eta(\xi_{j,\epsilon}))}+
\Big\|\Big(\sum_{i=1}^\kappa U_i\Big)^{q_0}-\Big(\sum_{i=1}^\kappa P U_i\Big)^{q_0}\Big\|_{L^{\frac{q_0+1}{q_0}}( B_\eta(\xi_{j,\epsilon}))}\\
&+\Big\|\sum_{i=1}^\kappa V_i^{p_0}-\Big(\sum_{i=1}^\kappa V_i\Big)^{p_0}\Big\|_{L^{\frac{p_0+1}{p_0}}( B_\eta(\xi_{j,\epsilon}))}
+\Big\|\Big(\sum_{i=1}^\kappa V_i\Big)^{p_0}-\Big(\sum_{i=1}^\kappa PV_i\Big)^{p_0}\Big\|_{L^{\frac{p_0+1}{p_0}}( B_\eta(\xi_{j,\epsilon}))}\\
:=&I_1+I_2+I_3+I_4.
\end{split}
\end{align}

For $I_1$, combining \eqref{4.14} or \eqref{4.14'}, it holds that
 \begin{align}\label{4.17}
\begin{split}
I_1= &\Big\|\sum_{i=1}^\kappa U_i^{q_0}-\Big(\sum_{i=1}^\kappa U_i\Big)^{q_0}\Big\|_{L^{\frac{q_0+1}{q_0}}( B_\eta(\xi_{j,\epsilon}))}\\
\leq& C\sum_{j=1}^\kappa\Big\| U_j^{q_0}-\Big(\sum_{i=1}^\kappa U_i\Big)^{q_0}\Big\|_{L^{\frac{q_0+1}{q_0}}( B_\eta(\xi_{j,\epsilon}))}
+C\sum_{j=1}^\kappa\sum_{i\neq j}^\kappa\| U_i^{q_0}\|_{L^{\frac{q_0+1}{q_0}}( B_\eta(\xi_{j,\epsilon}))}\\
\leq& C\sum_{j=1}^\kappa\sum_{i\neq j}^\kappa\|U_j^{q_0-1}U_i\|_{L^{\frac{q_0+1}{q_0}}( B_\eta(\xi_{j,\epsilon}))}+O(\epsilon^{\frac12+\sigma}).
\end{split}
\end{align}

We also discuss it in two cases. If $p_0>\frac n{n-2}$,  we take $\frac1{r_1}+\frac1{r_2}=\frac{q_0}{q_0+1}$ and estimate for $i\neq j$ that
 \begin{align}\label{4.18}
\begin{split}
 &\|U_j^{q_0-1}U_i\|_{L^{\frac{q_0+1}{q_0}}( B_\eta(\xi_{j,\epsilon}))}\\
\leq & C\|U_j^{q_0-1}\|_{L^{r_1}( B_\eta(\xi_{j,\epsilon}))}\|U_i\|_{L^{r_2}( B_\eta(\xi_{j,\epsilon}))}\\
\leq & C\|U_j^{q_0-1}\|_{L^{r_1}( B_\eta(\xi_{j,\epsilon}))}\frac{\delta_{i,\epsilon}^{n-2-\frac n{q_0+1}}}{\eta^{n-2-\frac{n}{r_2}}}=O\Big(\delta_j^{\frac{n}{r_1}-\frac{n(q_0-1)}{q_0+1}}\frac{\delta_{i,\epsilon}^{n-2-\frac n{q_0+1}}}{\eta_i^{n-2-\frac{n}{r_2}}}\Big)\\
=&O\Big(\epsilon^{\frac{n-1}{n-2}(n-2+\frac{n}{r_1}-\frac{nq_0}{q_0+1})-(n-2-\frac{n}{r_2})}\Big)
=O\Big(\epsilon^{1-\frac{n}{n-2}\frac{1}{r_2}}\Big)=O(\epsilon^{\frac12+\sigma}),
\end{split}
\end{align}
where we choose $r_2>\frac{2n}{n-2}$.

If $p_0<\frac n{n-2}$,  we take $\frac1{r_1}+\frac1{r_2}=\frac{q_0}{q_0+1}$ and estimate for $i\neq j$ that
 \begin{align}\label{4.18'}
\begin{split}
 &\|U_j^{q_0-1}U_i\|_{L^{\frac{q_0+1}{q_0}}( B_\eta(\xi_{j,\epsilon}))}\\
\leq & C\|U_j^{q_0-1}\|_{L^{r_1}( B_\eta(\xi_{j,\epsilon}))}\|U_i\|_{L^{r_2}( B_\eta(\xi_{j,\epsilon}))}\\
\leq & C\|U_j^{q_0-1}\|_{L^{r_1}( B_\eta(\xi_{j,\epsilon}))}\frac{\delta_{i,\epsilon}^{(n-2)p_0-2-\frac n{q_0+1}}}{\eta^{(n-2)p_0-2-\frac{n}{r_2}}}\\
 \leq & C\Big(\int_{ B_{\frac{\eta}{\delta}}(0)}\frac1{(1+|y|)^{((n-2)p_0-2)(q_0-1)r_1}}dy\Big)^{\frac1{r_1}}\frac{\delta_{i,\epsilon}^{(n-2)p_0-2-\frac n{q_0+1}}}{\eta^{(n-2)p_0-2-\frac{n}{r_2}}}\\
=&\begin{cases}
O\Big(\Big(\frac\delta\eta\Big)^{((n-2)p_0-2)q_0-\frac n{r_1}}\delta^{\frac{n}{r_1}-\frac{n(q_0-1)}{q_0+1}}\eta^{\frac n{r_2}}\Big)
=O\Big(\Big(\frac\delta\eta\Big)^{\frac{p_0q_0n}{q_0+1}}\Big)&if\ ((n-2)p_0-2)(q_0-1)r_1<n\\
O\Big(\Big(\frac\delta\eta\Big)^{\frac{(p_0+1)n}{q_0+1}-\frac n{r_2}-\theta_0}\Big)&if\ ((n-2)p_0-2)(q_0-1)r_1\geq n
\end{cases}\\
=&\begin{cases}
O(\epsilon^{\frac12+\sigma})&if\ ((n-2)p_0-2)(q_0-1)r_1<n\\
O\Big(\epsilon^{\frac{n}{(n-2)p_0-2}(\frac{p_0+1}{q_0+1}-\frac{1}{r_2})-\frac{\theta_0}{n-2}}\Big)=O(\epsilon^{\frac12+\sigma})&if\ ((n-2)p_0-2)(q_0-1)r_1\geq n
\end{cases},
\end{split}
\end{align}
where $\theta_0>0$ is any small constant and we choose $\frac n{r_2}<\frac{p_0+1}{q_0+1}n-\frac{(n-2)p_0-2}2$ and notice that $\frac{p_0q_0n}{q_0+1}>\frac{(n-2)p_0-2}2$.

Combining \eqref{4.17}, \eqref{4.18} and \eqref{4.18'},
 \begin{align}\label{4.19}
\begin{split}
I_1=O(\epsilon^{\frac12+\sigma}).
\end{split}
\end{align}

Similarly (actually more easily), we can obtain
 \begin{align}\label{4.20}
\begin{split}
I_3=O(\epsilon^{\frac12+\sigma}).
\end{split}
\end{align}
\medskip

Next, we estimate $I_2$ and $I_4$ in \eqref{4.16}.
We have that
\begin{align}\label{4.21}
\begin{split}
&\Big\|(\sum_{i=1}^\kappa U_i)^{q_0}-(\sum_{i=1}^\kappa P U_i)^{q_0}\Big\|_{L^{\frac{q_0+1}{q_0}}( B_\eta(\xi_{j,\epsilon}))}
%+\Big\|(\sum_{i=1}^\kappa V_i)^{p_0}-(\sum_{i=1}^\kappa PV_i)^{p_0}\Big\|_{L^{\frac{p_0+1}{p_0}}( B_\eta(\xi_{j,\epsilon}))}\\
\leq C\Big\|U_j^{q_0-1}\sum_{i=1}^\kappa (U_i-PU_i)\Big\|_{L^{\frac{q_0+1}{q_0}}( B_\eta(\xi_{j,\epsilon}))}\\
%+C\Big\|V_j^{p_0-1}\sum_{i=1}^\kappa (V_i-PV_i)\Big\|_{L^{\frac{p_0+1}{p_0}}( B_\eta(\xi_{j,\epsilon}))}\\
\leq & C \frac{\delta^{-\frac{n(q_0-1)}{q_0+1}+\frac{np_0}{q_0+1}}}{\eta^{\frac{p_0+1}{q_0+1}n}}\Big( \mathlarger\int_{B_\eta(\xi_{j,\epsilon})}
\sum_{i=1}^\kappa U\Big(\frac{x-\xi_{j,\epsilon}}{\delta_j}\Big)^{\frac{q_0^2-1}{q_0}}dx
\Big)^{\frac{q_0}{q_0+1}}\\
=&\displaystyle\begin{cases}
O\Big(\Big(\frac{\delta}\eta\Big)^{\frac{(p_0+1)n}{q_0+1}-\theta_0}
\Big),\ \ &(n-2)\frac{q_0^2-1}{q_0}\geq n\\
O\Big(\Big(\frac{\delta}\eta\Big)^{\frac{p_0q_0n}{q_0+1}}\Big),\ \ &(n-2)\frac{q_0^2-1}{q_0}< n
\end{cases}\\
=&O(\epsilon^{\frac12+\sigma}),
\end{split}
\end{align}
where we used the fact  that $\frac{(p_0+1)n}{q_0+1},\frac{p_0q_0n}{q_0+1}>\frac{(n-2)p_0-2}{2}$ and $\theta_0>0$ is any small constant.
\medskip

Similar estimate holds for $I_4$.

Combining \eqref{4.16}-\eqref{4.21}, we have
\begin{align}
\begin{split}
&\Big\|\sum_{i=1}^\kappa U_i^{q_0}-W_1^{q_0}\Big\|_{L^{\frac{q_0+1}{q_0}}(B_\eta(\xi_{j,\epsilon}))}+\Big\|\sum_{i=1}^\kappa V_i^{p_0}-W_2^{p_0}\Big\|_{L^{\frac{p_0+1}{p_0}}( B_\eta(\xi_{j,\epsilon}))}=O(\epsilon^{\frac12+\sigma}).
\end{split}
\end{align}

Finally, following \cite{pino-felmer-musso} and \cite{pino-felmer}, we can estimate $J$ in \eqref{4.12}.
\begin{align*}
\begin{split}
&J:=\Big\|W_2^p-W_2^{p_0}\Big\|_{L^{\frac{p_0+1}{p_0}}(\Omega)}+\Big\|W_1^q-W_1^{q_0}\Big\|_{L^{\frac{q_0+1}{q_0}}(\Omega)}           %%%%%%%%%%%%% 证明补充进来
=O(\epsilon |\ln\epsilon|)=O(\epsilon^{\frac12+\sigma}).
\end{split}
\end{align*}

\end{proof}

\medskip

\begin{Lem}
For any compact subsect $\Gamma_0$ of $\Gamma$, there exists $\epsilon_0>0$ and $\sigma>0$ such that for  $\epsilon\in(0,\epsilon_0)$
and $(\vec\xi,\vec\Lambda,\vec t)\in\Gamma_0$, there exists a unique $(\psi_\epsilon,\phi_\epsilon)\in E_{\xi,\Lambda,t}$ and $C>0$ such that \eqref{bot} holds and
\begin{align*}
\|(\psi_\epsilon,\phi_\epsilon)\|\leq C\epsilon^{\frac12+\sigma}.
\end{align*}

\end{Lem}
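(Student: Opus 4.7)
The plan is to apply the contraction mapping theorem to the operator
$T_{\xi,\Lambda,t}$ defined in \eqref{fixed} on a small ball
$$
B_\epsilon := \big\{(\psi,\phi)\in E_{\xi,\Lambda,t} : \|(\psi,\phi)\|\leq K\epsilon^{\frac12+\sigma}\big\}
$$
for a suitable large constant $K>0$. Thanks to Corollary \ref{corlinear}, the operator $L_{\xi,\Lambda,t}^{-1}$ is uniformly bounded on $\Gamma_0$, so the task reduces to estimating the source term $R_\epsilon$ (already done in Lemma \ref{lemerror}) and the nonlinear remainder $N_\epsilon(\psi,\phi)$.

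First I would establish the key nonlinear estimate
\begin{align*}
\|N_\epsilon(\psi,\phi)\| \leq C\bigl(\|(\psi,\phi)\|^{\min\{p,2\}} + \|(\psi,\phi)\|^{\min\{q,2\}}\bigr),
\end{align*}
together with the Lipschitz-type bound
\begin{align*}
\|N_\epsilon(\psi_1,\phi_1)-N_\epsilon(\psi_2,\phi_2)\|
\leq C\bigl(\|(\psi_1,\phi_1)\|+\|(\psi_2,\phi_2)\|\bigr)^{\tau}\,\|(\psi_1-\psi_2,\phi_1-\phi_2)\|
\end{align*}
for some $\tau>0$. Both follow from the pointwise inequality
$$\bigl||a+b|^{r-1}(a+b)-|a|^{r-1}a-r|a|^{r-1}b\bigr|\leq C\bigl(|a|^{r-2}b^{2}\chi_{r\geq 2}+|b|^{r}\chi_{r<2}\bigr),$$
applied componentwise to $N_{1,\epsilon}$ and $N_{2,\epsilon}$, combined with H\"older's inequality in the dual Sobolev spaces $L^{(p_0+1)/p_0}$ and $L^{(q_0+1)/q_0}$, and the Sobolev embeddings \eqref{emb}. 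One also absorbs the small discrepancy between the exponents $p,q$ and their critical counterparts $p_0,q_0$ using $|p-p_0|+|q-q_0|=O(\epsilon)$, which produces at worst a factor $|\ln\epsilon|$ that is harmless.

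With these two ingredients I would then verify the hypotheses of Banach's fixed-point theorem on $B_\epsilon$. For $(\psi,\phi)\in B_\epsilon$,
\begin{align*}
\|T_{\xi,\Lambda,t}(\psi,\phi)\|
&\leq C\bigl(\|R_\epsilon\|+\|N_\epsilon(\psi,\phi)\|\bigr) \\
&\leq C\epsilon^{\frac12+\sigma}+C(K\epsilon^{\frac12+\sigma})^{1+\tau}
\leq K\epsilon^{\frac12+\sigma},
\end{align*}
provided $K$ is chosen large enough and $\epsilon_0$ small enough; this shows $T_{\xi,\Lambda,t}(B_\epsilon)\subset B_\epsilon$. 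The contraction property follows analogously from the Lipschitz bound, yielding a unique fixed point $(\psi_\epsilon,\phi_\epsilon)\in B_\epsilon$, which by construction solves \eqref{bot} and obeys $\|(\psi_\epsilon,\phi_\epsilon)\|\leq C\epsilon^{\frac12+\sigma}$.

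The main obstacle I expect is the nonlinear estimate itself, because the exponents $p,q$ fall in different regimes relative to $2$: in the regime $p_0<n/(n-2)$ one has $p<2$ while $q>2$, so the two components $N_{1,\epsilon}$ and $N_{2,\epsilon}$ require genuinely different pointwise inequalities and different H\"older splittings. A delicate point is choosing the intermediate exponents so that, after using $\|W_1^{q_0-1}\|$ and $\|W_2^{p_0-1}\|$ in the appropriate weighted Lebesgue spaces and the embedding $X_{p,q}\hookrightarrow L^{q+1}\times L^{p+1}$, the resulting power of $\|(\psi,\phi)\|$ is strictly larger than $1$ uniformly in $(\vec\xi,\vec\Lambda,\vec t)\in\Gamma_0$; this is where the assumption $p_0>p_n$ in \eqref{P} enters, exactly as in Lemma \ref{lemP'2'}.
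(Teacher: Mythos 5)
Your proposal is correct and follows essentially the same route as the paper: a Banach fixed-point argument for $T_{\xi,\Lambda,t}$ on a ball of radius comparable to $\|R_\epsilon\|=O(\epsilon^{\frac12+\sigma})$, using the uniform invertibility from Corollary \ref{corlinear}, the error estimate of Lemma \ref{lemerror}, and superlinear bounds on $N_{1,\epsilon},N_{2,\epsilon}$ split according to whether $p,q$ are above or below $2$. The only cosmetic difference is that the paper isolates the terms $(q_0W_1^{q_0-1}-qW_1^{q-1})\psi$ and $(p_0W_2^{p_0-1}-pW_2^{p-1})\phi$ explicitly and estimates them by $O(\epsilon\log\epsilon)\|(\psi,\phi)\|$, whereas you fold this exponent-discrepancy into your H\"older step; since you note that it only costs a harmless $|\ln\epsilon|$ factor, the argument is the same.
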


\begin{proof}
Recall that we reformulate \eqref{bot} as a fixed problem \eqref{fixed}
with
\begin{align*}
\begin{split}
N_\epsilon(\psi,\phi)=\Pi_{\xi,\Lambda,t}^\bot i^*(N_{1,\epsilon}(\phi),N_{2,\epsilon}(\psi)),
\end{split}
\end{align*}
with
\begin{align*}
\begin{split}
&N_{1,\epsilon}(\psi)=|W_1+\psi|^{q-1}(W_1+\psi)-W_1^q-q_0W_1^{q_0-1}\psi,\\
&N_{2,\epsilon}(\phi)=|W_2+\phi|^{p-1}(W_2+\phi)-W_2^p-p_0W_2^{p_0-1}\phi.
\end{split}
\end{align*}

It holds that
\begin{align}\label{4.24}
\begin{split}
&\|N_\epsilon(\psi,\phi)\| =\|\Pi_{\xi,\Lambda,t}^\bot i^*(N_{1,\epsilon}(\phi),N_{2,\epsilon}(\psi))\|\\
\leq & C\Big(\||W_1+\psi|^{q-1}(W_1+\psi)-W_1^q-qW_1^{q-1}\psi\|_{L^{\frac{q_0+1}{q_0}}(\Omega)}
+\||(q_0W_1^{q_0-1}-qW_1^{q-1})\psi\|_{L^{\frac{q_0+1}{q_0}}(\Omega)}\\
&+\||W_2+\phi|^{p-1}(W_2+\phi)-W_2^p-pW_2^{p-1}\phi\|_{L^{\frac{p_0+1}{p_0}}(\Omega)}+\||(p_0W_2^{p_0-1}-pW_2^{p-1})\phi\|_{L^{\frac{p_0+1}{p_0}}(\Omega)}
\Big)\\
:=& I_1+I_2+I_3+I_4.
\end{split}
\end{align}

For $I_1$, when $q>2$, noting that $q=q_0-\beta\epsilon$ and applying H\"older inequalities, we have
\begin{align}\label{4.25}
\begin{split}
I_1&\leq C\|W_1^{q-2}|\psi|^2+|\psi|^{q}\|_{L^{\frac{q_0+1}{q_0}}(\Omega)}
\\
&\leq C\Big(\|W_1\|_{L^{\frac{(q-2)(q_0+1)}{q_0-2}}(\Omega)}^{q-2}\|\psi\|_{L^{q_0+1}(\Omega)}^2
+\|\psi\|_{L^{\frac{q(q_0+1)}{q_0}}(\Omega)}^q
\Big)\\
&\leq  C\Big(\epsilon^{\frac{n((n-2)p_0-1)\beta}{((n-2)p_0-2)(q_0+1)}\epsilon}\|\psi\|_{L^{q_0+1}(\Omega)}^2
+|\Omega|^{\frac{\beta\epsilon}{q_0+1}}|\psi\|_{L^{q_0+1}(\Omega)}^q
\Big)\\
&\leq C\|\psi\|_{L^{q_0+1}(\Omega)}^2+C\|\psi\|_{L^{q_0+1}(\Omega)}^q.
\end{split}
\end{align}
While when $q\in(1,2)$, it holds that
\begin{align}\label{4.26}
\begin{split}
I_1&\leq C\||\psi|^{q}\|_{L^{\frac{q_0+1}{q_0}}(\Omega)}\leq C\|\psi\|_{L^{q_0+1}(\Omega)}^q.
\end{split}
\end{align}

For  $p=p_0-\alpha\epsilon$,
similar estimates gives  when $p>2$,
\begin{align}\label{4.27}
\begin{split}
I_3
&\leq C\|\phi\|_{L^{p_0+1}(\Omega)}^2+C\|\phi\|_{L^{p_0+1}(\Omega)}^p,
\end{split}
\end{align}
while when $p\in(1,2)$, we have
\begin{align}\label{4.28}
\begin{split}
I_3&\leq C\|\phi\|_{L^{p_0+1}(\Omega)}^p.
\end{split}
\end{align}

Next, using $q_0-q=\beta\epsilon,p_0-p=\alpha\epsilon$, we estimate
\begin{eqnarray*}
 I_2+I_4&=&\||(q_0W_1^{q_0-1}-qW_1^{q-1})\psi\|_{L^{\frac{q_0+1}{q_0}}(\Omega)}+\||(p_0W_2^{p_0-1}-pW_2^{p-1})\phi\|_{L^{\frac{p_0+1}{p_0}}(\Omega)}
\\
&\leq& \|\beta\epsilon W_1^{q_0-1}(1+q_0\log  W_1)\psi\|_{L^{\frac{q_0+1}{q_0}}(\Omega)}
+\|\alpha\epsilon W_2^{p_0-1}(1+p_0\log  W_2)\phi\|_{L^{\frac{p_0+1}{p_0}}(\Omega)}\\
&\leq& C(\epsilon+\epsilon\log\epsilon)\|U\|_{L^{q_0+1}(\Omega)}^{q_0-1}\|\psi\|_{L^{q_0+1}(\Omega)}+C\epsilon\Big\|U(\log U)^{\frac{q_0+1}{q_0-1}}\Big\|_{L^{q_0+1}(\Omega)}^{q_0-1}\|\psi\|_{L^{q_0+1}(\Omega)}\\
&&+C(\epsilon+\epsilon\log\epsilon)\|V\|_{L^{p_0+1}(\Omega)}^{p_0-1}\|\phi\|_{L^{p_0+1}(\Omega)}+C\epsilon\Big\|V(\log V)^{\frac{p_0+1}{p_0-1}}\Big\|_{L^{p_0+1}(\Omega)}^{p_0-1}\|\psi\|_{L^{p_0+1}(\Omega)}\\
&&+o(\epsilon)\|(\psi,\phi)\|\\
&=&O(\epsilon\log\epsilon)\|(\psi,\phi)\|.
\end{eqnarray*}

Using a standard argument, we prove that there exists some $C>0$ such that $T_{\xi,\Lambda,t}$ is a contradiction map on
$$M=\{(\psi,\phi)\in E_{\xi,\Lambda,t}:\|(\psi,\phi)\|\leq C\|R_\epsilon\|\}.$$
In view of Lemma \ref{lemerror}, there exists a unique solution $(\psi,\phi)\in E_{\xi,\Lambda,t}$ of \eqref{bot} satisfying $$\|(\psi,\phi)\|\leq C\|R_\epsilon\|\leq C\epsilon^{\frac12+\sigma}.$$
\end{proof}

\medskip

\section{The Reduced Problem}

Recall the energy functional
\begin{align}\label{I}
I_\epsilon(u,v):=\int_{\Omega}a(x)\nabla u\cdot\nabla vdx
-\frac1{p+1}\int_{\Omega}a(x)| v|^{p+1}dx-\frac1{q+1}\int_{\Omega}a(x)|u|^{q+1}dx.
\end{align}
It is well-known that $(v_1,v_2)\in X$ is a solution to \eqref{eqv} if and only if it is a positive solution of $I_\epsilon$.
Set the reduced energy
\begin{align}\label{J}
J_\epsilon(\vec\xi,\vec\Lambda,\vec t)=I_\epsilon\big(W_1+\psi_\epsilon,W_2+\phi_\epsilon\big),
\end{align}
where
$$W_1=\sum_{i=1}^\kappa PU_{i}=\sum_{i=1}^\kappa PU_{\xi_{i,\epsilon},\delta_{i,\epsilon}},\ \ \  W_2=\sum_{i=1}^\kappa PV_{i}=\sum_{i=1}^\kappa PV_{\xi_{i,\epsilon},\delta_{i,\epsilon}},$$
and $(\psi_\epsilon,\phi_\epsilon)$ is a solution to \eqref{bot} for given $(\vec\xi,\vec\Lambda,\vec t)\in \Gamma$ found in
Corollary \ref{corlinear}.

It is standard to give that
\begin{Prop}\label{prop5.1}
The function pair $\big(W_1+\psi_\epsilon,W_2+\phi_\epsilon\big)$ is a critical point of $I_\epsilon$ if and only if the points $(\vec\xi,\vec\Lambda,\vec t)$ is a critical point of $J_\epsilon$.

\end{Prop}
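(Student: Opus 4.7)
The plan is to exploit the Lyapunov–Schmidt reduction in the standard way: show that critical points of $J_\epsilon$ correspond exactly to solutions of the finite–dimensional system of Lagrange multipliers produced by \eqref{bot}, so that together with \eqref{bot} they yield critical points of $I_\epsilon$. The ``only if'' direction is trivial: any critical point of $I_\epsilon$ that has the form $(W_1+\psi,W_2+\phi)$ with $(\psi,\phi)\in E_{\xi,\Lambda,t}$ automatically makes $J_\epsilon$ stationary in $(\vec\xi,\vec\Lambda,\vec t)$ by the chain rule, since $(\psi_\epsilon,\phi_\epsilon)$ is uniquely determined by $(\vec\xi,\vec\Lambda,\vec t)$.

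For the ``if'' direction, I first recall that \eqref{bot} is equivalent to the existence of multipliers $c_{i,l}=c_{i,l}(\vec\xi,\vec\Lambda,\vec t,\epsilon)$ such that
\begin{align*}
I_\epsilon'\big(W_1+\psi_\epsilon,W_2+\phi_\epsilon\big)=\sum_{i=1}^\kappa\sum_{l=0}^n c_{i,l}\,\mathcal{L}(P\Psi_i^l,P\Phi_i^l),
\end{align*}
where $\mathcal{L}$ denotes the natural identification given by the bilinear form $(\cdot,\cdot)_a$ (i.e.\ the element whose action on a test pair $(\varphi_1,\varphi_2)$ is $\int_\Omega a(x)(\nabla P\Phi_i^l\cdot\nabla \varphi_1+\nabla P\Psi_i^l\cdot\nabla \varphi_2)$). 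Differentiating $J_\epsilon$ with respect to one of the parameters $\tau\in\{\xi_j^s,\Lambda_j,t_j\}$ and applying the chain rule, I get
\begin{align*}
\partial_\tau J_\epsilon(\vec\xi,\vec\Lambda,\vec t)=\sum_{i,l}c_{i,l}\int_\Omega a(x)\Big(\nabla P\Phi_i^l\cdot\nabla\partial_\tau(W_1+\psi_\epsilon)+\nabla P\Psi_i^l\cdot\nabla\partial_\tau(W_2+\phi_\epsilon)\Big).
\end{align*}

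The key step is to show that, for $\epsilon$ sufficiently small, the coefficient matrix $A_\epsilon(\vec\xi,\vec\Lambda,\vec t)$ attaching the multipliers $(c_{i,l})$ to the derivatives $(\partial_\tau J_\epsilon)$ is invertible. To see this, I decompose $\partial_\tau W_1$ and $\partial_\tau W_2$ as linear combinations of the $P\Psi_i^l$, $P\Phi_i^l$ up to lower order (after rescaling $\xi_{i,\epsilon}=\xi_i+\epsilon t_i\nu(\xi_i)$ and $\delta_{i,\epsilon}$ as in \eqref{set}); the contribution of $(\partial_\tau\psi_\epsilon,\partial_\tau\phi_\epsilon)$ is harmless since $(\psi_\epsilon,\phi_\epsilon)\in E_{\xi,\Lambda,t}$, and differentiating the orthogonality conditions shows this term contributes at a strictly smaller order than the diagonal block. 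The diagonal block comes out to be, after rescaling to $\Omega_{i,\epsilon}=\delta_{i,\epsilon}^{-1}(\Omega-\xi_{i,\epsilon})$,
\begin{align*}
\int_{\mathbb R^n}\big(p_0 V_{0,1}^{p_0-1}\Phi_{0,1}^l\Phi_{0,1}^s+q_0 U_{0,1}^{q_0-1}\Psi_{0,1}^l\Psi_{0,1}^s\big)\,dy+o(1),
\end{align*}
which is non-singular by the non-degeneracy stated in Lemma \ref{lemnonde}, while the off-diagonal interaction blocks (with $i\neq j$) are $o(1)$ by the separation $|\xi_{i,\epsilon}-\xi_{j,\epsilon}|\gtrsim 1$ and the decay estimates in Lemma \ref{lemasym}. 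Hence $A_\epsilon$ is invertible for small $\epsilon$, and $\nabla J_\epsilon(\vec\xi,\vec\Lambda,\vec t)=0$ forces all $c_{i,l}=0$, i.e.\ \eqref{Pi} holds, concluding that $(W_1+\psi_\epsilon,W_2+\phi_\epsilon)$ is a critical point of $I_\epsilon$.

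The main technical obstacle is precisely the invertibility of $A_\epsilon$: I need clean asymptotic formulas for $\partial_{\Lambda_i}W_1,\partial_{\Lambda_i}W_2$ and $\partial_{\xi_i}W_1,\partial_{\xi_i}W_2$ and $\partial_{t_i}W_1,\partial_{t_i}W_2$, including the different rescalings dictated by \eqref{set} in the two regimes $p_0\gtrless n/(n-2)$. In particular, the projected remainders involving $H$ (respectively $h_i$ when $p_0<n/(n-2)$) enter the computation, and the analogues of Lemmas \ref{lemP'} and \ref{lemP'2'} together with Lemma \ref{lemP2} provide exactly the control needed to separate the leading diagonal term from the cross terms. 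Once these estimates are assembled, the remainder of the argument is formal and the equivalence in Proposition \ref{prop5.1} follows.
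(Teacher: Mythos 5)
Your proposal is correct and is exactly the standard Lyapunov--Schmidt argument that the paper itself invokes without proof (Proposition \ref{prop5.1} is stated with only the remark ``It is standard to give that''): reduce to the Lagrange multipliers $c_{i,l}$, pair $\nabla J_\epsilon=0$ against the parameter derivatives of $(W_1+\psi_\epsilon,W_2+\phi_\epsilon)$, and invert the asymptotically diagonal coefficient matrix. The only minor point to tighten is that the nonsingularity of the diagonal block rests on the nonvanishing of the integrals $\int_{\mathbb R^n}\bigl(q_0U_{0,1}^{q_0-1}(\Psi_{0,1}^l)^2+p_0V_{0,1}^{p_0-1}(\Phi_{0,1}^l)^2\bigr)=2\int_{\mathbb R^n}\nabla\Psi_{0,1}^l\cdot\nabla\Phi_{0,1}^l$ (a short computation using \eqref{eqlinear} and symmetry), rather than on Lemma \ref{lemnonde} itself, together with the $C^1$ dependence of $(\psi_\epsilon,\phi_\epsilon)$ on the parameters coming from the implicit function theorem in the contraction step.
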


\medskip

We are reduced to find the critical points of $J_\epsilon$. For this purpose, we give the asymptotic expansion as follows.
\begin{Prop}\label{prop5.2}
It holds that there exist constants $c_i,i=1,2,\ldots,6$ with $c_4,c_5,c_5',c_6>0$ such that
\begin{align}
\begin{split}
&J_\epsilon(\vec\xi,\vec\Lambda,\vec t)=(c_1+c_2\epsilon\log\epsilon)\sum_{i=1}^\kappa a(\xi_{i})+\epsilon\sum_{i=1}^\kappa \Big[c_3a(\xi_{i})+c_4\langle\nabla a(\xi_{i}),\gamma(\xi_{i})\rangle t_i\\
&\quad+\begin{cases} c_5a(\xi_{i})
\Big(\frac{\Lambda_i}{2t_i}\Big)^{n-2}\  &if\ p_0>\frac n{n-2}\\
 c_5'a(\xi_{i})\Big(\frac{\Lambda_i}{2t_i}\Big)^{(n-2)p_0-2}\  &if\ p_0<\frac n{n-2}
\end{cases}-c_6 a(\xi_{i})\log \Lambda_i\Big]+O(\epsilon^{1+\sigma}).
\end{split}
\end{align}

\end{Prop}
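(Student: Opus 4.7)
The plan is to evaluate $J_\epsilon$ on the parameter set by first reducing it to $I_\epsilon(W_1,W_2)$ and then asymptotically expanding each summand bubble-by-bubble.

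First, I would establish $J_\epsilon(\vec\xi,\vec\Lambda,\vec t)=I_\epsilon(W_1, W_2) + O(\epsilon^{1+\sigma})$. Taylor expansion at $(W_1, W_2)$ gives
\[J_\epsilon = I_\epsilon(W_1,W_2) + dI_\epsilon(W_1,W_2)[\psi_\epsilon,\phi_\epsilon] + O(\|(\psi_\epsilon,\phi_\epsilon)\|^2).\]
Since $(\psi_\epsilon,\phi_\epsilon)\in E_{\xi,\Lambda,t}$ is $X_{p,q}$-orthogonal to $K_{\xi,\Lambda,t}$, pairing with $dI_\epsilon(W_1,W_2)$ only sees the $\Pi^\bot$-component of the ansatz error, namely $R_\epsilon$. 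Cauchy--Schwarz together with Lemma~\ref{lemerror} and the previously established bound $\|(\psi_\epsilon,\phi_\epsilon)\|=O(\epsilon^{1/2+\sigma})$ make both correction terms $O(\epsilon^{1+\sigma})$, reducing the problem to expanding $I_\epsilon(W_1,W_2)$.

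Next, I would expand the three summands of $I_\epsilon(W_1,W_2)$. Integration by parts based on $-\mathrm{div}(a\nabla PU_i)=aV_i^{p_0}+\nabla a\cdot\nabla PU_i$ turns the cross term $\int a\nabla W_1\cdot\nabla W_2$ into $\sum_{i,j}\int_\Omega aV_i^{p_0}PV_j$, with the $\nabla a$-remainder absorbed into $O(\epsilon^{1+\sigma})$ by Lemmas~\ref{lemP'} and~\ref{lemP'2'}; the off-diagonal $i\neq j$ interactions contribute only to the remainder, by the same decay estimates used in the proof of Lemma~\ref{lemerror}. For $i=j$ I split $PV_i=V_i-(V_i-PV_i)$. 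The self-energy $\int_\Omega aV_i^{p_0+1}$ is evaluated by Taylor-expanding $a(x)=a(\xi_{i,\epsilon})+\nabla a(\xi_{i,\epsilon})\cdot(x-\xi_{i,\epsilon})+O(|x-\xi_{i,\epsilon}|^2)$ around the bubble center, using $\xi_{i,\epsilon}=\xi_i+\epsilon t_i\nu(\xi_i)$ and the radial symmetry of $V_i$; this produces the leading $c_1 a(\xi_i)$ contribution and the first-order normal-derivative piece $\epsilon c_4\langle\nabla a(\xi_i),\nu(\xi_i)\rangle t_i$. A parallel treatment of $\int aU_i^{q_0+1}$ adds analogous contributions.

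The crucial boundary correction comes from $-\int aV_i^{p_0}(V_i-PV_i)$ and its $U$-analogue. By Lemma~\ref{lemP} (and Lemma~\ref{lemP'2} in the slow-decay regime), $V_i-PV_i\approx\frac{b_{n,p_0}}{\gamma_n}\delta_{i,\epsilon}^{n/(q_0+1)}H(\cdot,\xi_{i,\epsilon})$, and near the bubble $H(x,\xi_{i,\epsilon})\approx H(\xi_{i,\epsilon},\xi_{i,\epsilon})\approx(2\eta_i)^{-(n-2)}$. After rescaling $x=\xi_{i,\epsilon}+\delta_{i,\epsilon}s$, the integral reduces, modulo lower order, to
\[\frac{b_{n,p_0}}{\gamma_n}\,a(\xi_i)\,\delta_{i,\epsilon}^{n/(p_0+1)+n/(q_0+1)}(2\eta_i)^{-(n-2)}\int V^{p_0}(s)\,ds.\]
When $p_0>n/(n-2)$ the last integral is finite, and $(\delta_{i,\epsilon}/\eta_i)^{n-2}=\epsilon(\Lambda_i/t_i)^{n-2}$ via the scaling in~\eqref{set} produces the $c_5 a(\xi_i)(\Lambda_i/(2t_i))^{n-2}\epsilon$ term. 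When $p_0<n/(n-2)$ the same integral diverges at infinity and must be truncated at $|s|\sim\eta_i/\delta_{i,\epsilon}$: the extra factor $(\eta_i/\delta_{i,\epsilon})^{n-(n-2)p_0}$ converts $(\delta_{i,\epsilon}/\eta_i)^{n-2}$ into $(\delta_{i,\epsilon}/\eta_i)^{(n-2)p_0-2}$, which by the alternative scaling in~\eqref{set} equals $\epsilon(\Lambda_i/t_i)^{(n-2)p_0-2}$, producing the $c_5'$ term.

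Finally, for the exponent perturbation $p=p_0-\alpha\epsilon$, $q=q_0-\beta\epsilon$, I write $W_2^{p+1}=W_2^{p_0+1}(1-\alpha\epsilon\log W_2)+O(\epsilon^2)$ and similarly for $W_1^{q+1}$. Localizing near each bubble, $\log V_i$ has dominant piece $-\frac{n}{p_0+1}\log\delta_{i,\epsilon}$, and $\log\delta_{i,\epsilon}$ splits as $\frac{n-1}{n-2}\log\epsilon+\log\Lambda_i$ (respectively $\frac{(n-2)p_0-1}{(n-2)p_0-2}\log\epsilon+\log\Lambda_i$), producing the $c_2\epsilon\log\epsilon\sum a(\xi_i)$ and $-c_6 a(\xi_i)\log\Lambda_i$ contributions, while the residual $O(\epsilon)$ constant from $\int V^{p_0+1}\log V$ collects into $c_3\epsilon a(\xi_i)$. \emph{The main obstacle} is the uniform treatment of the two ranges $p_0\gtrless n/(n-2)$: in the slow-decay regime $V^{p_0}$ fails to be integrable over $\mathbb R^n$, so the boundary correction is only controlled after truncation at scale $\eta_i$, and the alternative choice of $\delta_{i,\epsilon}$ in~\eqref{set} is precisely tuned so that the boundary correction and the exponent perturbation both appear at the common order $\epsilon$.
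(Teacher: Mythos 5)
Your overall strategy (reduce $J_\epsilon$ to $I_\epsilon(W_1,W_2)$ up to $O(\epsilon^{1+\sigma})$ using the orthogonality and Lemma \ref{lemerror}, then expand bubble-by-bubble, with the $\epsilon\log\epsilon$, $\log\Lambda_i$ and $c_3$ terms coming from the exponent perturbation exactly as in Lemma \ref{lema5}) matches the paper, and the first reduction step is fine. The genuine gap is in the bookkeeping of the projection-error ("boundary") terms. You integrate the cross term by parts onto the equation $-\Delta PU_i=V_i^{p_0}$, obtaining $\sum_{i,j}\int_\Omega aV_i^{p_0}PV_j$, and you extract the boundary correction from the diagonal piece $-\int aV_i^{p_0}(V_i-PV_i)$. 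But when you then expand $-\frac1{p_0+1}\int a W_2^{p_0+1}$ around $\sum_iV_i$, its first-order term is $+\sum_i\int aV_i^{p_0}(V_i-PV_i)$, which is of exactly the order $\epsilon$ you are computing and cancels your boundary term identically; the surviving correction in your route is instead $+\sum_i\int aU_i^{q_0}(U_i-PU_i)$, coming from $-\frac1{q_0+1}\int aW_1^{q_0+1}$, which you never evaluate (your "$U$-analogue" is left vague). As written, your expansion either omits these order-$\epsilon$ terms from the powers $W_1^{q_0+1},W_2^{p_0+1}$, or, if done consistently, lands on the $U$-side term. For $p_0>\frac n{n-2}$ this happens to give the same constant, since integrating the system yields $a_{n,p_0}A_2=b_{n,p_0}B_2$, but for $p_0<\frac n{n-2}$ the $U$-side term cannot be computed sharply: by Lemma \ref{lemasym}, $U$ decays like $r^{-((n-2)p_0-2)}$, so $U_i-PU_i$ is not asymptotic to a multiple of $\delta^{\,\cdot}H(x,\xi_i)$, and Lemma \ref{lemP'2} (see \eqref{hi}, \eqref{hi'}) only gives upper bounds on $h_i=U_i-PU_i$. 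The paper's Lemma \ref{lema4} avoids exactly this: it writes the cross term as $\int aW_1(-\Delta W_2)=\int a\sum_jPU_j\sum_iU_i^{q_0}$, so the rough $U$-side corrections cancel between \eqref{e1} and \eqref{e2}, and only the $V$-side term $-\int aV_i^{p_0}(PV_i-V_i)$ survives, which is sharply computable because $V_i-PV_i$ is always controlled by $H$ with the exact constant $b_{n,p_0}$ (Lemma \ref{lemP}, Lemma \ref{lemP'2}).

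A secondary issue is your evaluation of that surviving term in the regime $p_0<\frac n{n-2}$. Freezing $H(x,\xi_{i,\epsilon})\approx(2\eta_i)^{-(n-2)}$ and truncating the divergent integral $\int V^{p_0}$ at $|s|\sim\eta_i/\delta_{i,\epsilon}$ gives the correct order $(\delta_{i,\epsilon}/\eta_i)^{(n-2)p_0-2}=\epsilon(\Lambda_i/t_i)^{(n-2)p_0-2}$, but not a justified constant: since $(n-2)p_0<n$, the integral $\int_{B_\eta}V_i^{p_0}\delta^{n/(q_0+1)}H(x,\xi_{i,\epsilon})\,dx$ is not concentrated near the bubble core, so one must keep the full kernel $|x-\bar\xi_{i,\epsilon}|^{-(n-2)}$ in the rescaled variables, as the paper does in Lemma \ref{lema2'} through the quantity $\mathcal I_i$. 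To repair your argument you would either have to redo the integration by parts in the paper's direction (so the $U$-side terms cancel), or establish sharp boundary asymptotics for $h_i$ in the slow-decay regime, which is precisely what the paper says is unavailable.
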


\begin{proof}
Since
\begin{align*}
\Big\langle \big(I'_u(W_1+\varphi_1,W_2+\varphi_2),I'_v(W_1+\varphi_1,W_2+\varphi_2)\big),(\varphi_1,\varphi_2)\Big\rangle=0, \ \forall\, (\varphi_1,\varphi_2)\in E_{\xi,\Lambda,t},
\end{align*}
 there are $t,s\in(0,1)$ such that
\begin{align*}
 &J_\epsilon(\vec\xi,\vec\Lambda,\vec t) \\
 =&I_\epsilon(W_1,W_2)-\frac12\langle D^2I(W_1+t\psi,W_2+s\phi)(\psi,\phi),(\psi,\phi)\rangle\\
 =&I_\epsilon(W_1,W_2)-\frac12\int_{\R^n}a(x)\Big(2\nabla\psi\cdot\nabla\phi-q (W_1+t\psi)^{q-1}\psi^2-p(W_2+s\phi)^{p-1}\phi^2\Big)\\
 =&I_\epsilon(W_1,W_2)+\frac{1}2\int_{\R^n}a(x)\Big(q ((W_1+t\psi)^{q-1}-W_1^{q-1})\psi^2-(N_2(\psi)+R_{2,\epsilon})\psi^2\\&
\hspace{4.2cm}+
p ((W_2+s\phi)^{p-1}-W_2^{p-1})\phi^2
-(N_1(\phi)+R_{1,\epsilon})\phi^2\Big).
\end{align*}

Note that
\begin{align*}
&\int_{\R^n}a(x)\Big((N_2(\psi)+R_{2,\epsilon})\psi^2+(N_1(\phi)+R_{1,\epsilon})\phi^2\Big)=O(\epsilon^{1+\sigma}).
\end{align*}
Therefore, we obtain from Lemma \ref{lema4} and Lemma \ref{lema5} that
\begin{align*}
\begin{split}
&J_\epsilon(\vec\xi,\vec\Lambda,\vec t)=I_\epsilon(W_1,W_2)+O(\epsilon^{1+\sigma})\\
=& \frac {2A_1}n\sum_{i=1}^\kappa\Big(a(\xi_{i})+\langle\nabla a(\xi_{i}),\gamma(\xi_{i})\rangle t_i\epsilon \Big)\\&+\begin{cases} \frac{b_{n,p_0}B_2}{\gamma_n}\sum_{i=1}^\kappa a(\xi_{i})\epsilon\Big(\frac{\Lambda_i}{2t_i}\Big)^{n-2} &if\  p_0>\frac n{n-2}\\
  \frac{b_{n,p_0}\mathcal I_i}{\gamma_n}\sum_{i=1}^\kappa a(\xi_{i})\epsilon\Big(\frac{\Lambda_i}{2t_i}\Big)^{(n-2)p_0-2} &if\  p_0<\frac n{n-2}
  \end{cases}\\&-\epsilon\log\epsilon\frac{n(n-1)}{n-2}\Big(\frac{A_1}{(q_0+1)^2}+\frac{B_1}{(p_0+1)^2}\Big)\sum_{j=1}^\kappa a(\xi_{i})\\
&-\epsilon\Big(\frac{\beta A_1}{(q_0+1)^2}+\frac{\alpha B_1}{(p_0+1)^2}\Big)\sum_{j=1}^\kappa a(\xi_{i})
-\epsilon\Big(\frac{nA_1}{(q_0+1)^2}+\frac{nB_1}{(p_0+1)^2}\Big)\sum_{j=1}^\kappa a(\xi_{i})\log \Lambda_i\\&
+\epsilon\Big(\frac{A_3}{q_0+1}+\frac{B_3}{p_0+1}\Big)\sum_{j=1}^\kappa a(\xi_{i})+O(\epsilon^{1+\sigma})
\\=& \Big[\frac {2A_1}n-\epsilon\log\epsilon\frac{n(n-1)}{n-2}\Big(\frac{A_1}{(q_0+1)^2}+\frac{B_1}{(p_0+1)^2}\Big)\Big]\sum_{i=1}^\kappa a(\xi_{i})\\&+
\epsilon\sum_{i=1}^\kappa \Big[-A_1\Big(\frac{\beta}{(q_0+1)^2}+\frac{\alpha}{(p_0+1)^2}\Big)a(\xi_{i})+\Big(\frac{A_3}{q_0+1}+\frac{B_3}{p_0+1}\Big) a(\xi_{i})\\
&+\frac {2A_1}n\langle\nabla a(\xi_{i}),\gamma(\xi_{i})\rangle t_i-\Big(\frac{nA_1}{(q_0+1)^2}+\frac{nB_1}{(p_0+1)^2}\Big) a(\xi_{i})\log \Lambda_i\\
&+\begin{cases} \frac{b_{n,p_0}B_2}{\gamma_n}\sum_{i=1}^\kappa a(\xi_{i})\epsilon\Big(\frac{\Lambda_i}{2t_i}\Big)^{n-2} &if\  p_0>\frac n{n-2}\\
  \frac{b_{n,p_0}\mathcal I_i}{\gamma_n}\sum_{i=1}^\kappa a(\xi_{i})\epsilon\Big(\frac{\Lambda_i}{2t_i}\Big)^{(n-2)p_0-2} &if\  p_0<\frac n{n-2}
  \end{cases}
+O(\epsilon^{\sigma})\Big].
\end{split}
\end{align*}

\end{proof}

\medskip

\begin{proof}[
\textbf{Proof of Theorem \ref{th1}}]
From Proposition \ref{prop5.2}, there exist $c_1$ and $c_2$ such that
\begin{align*}
J_\epsilon(\vec\xi,\vec\Lambda,\vec t)=(c_1+c_2\epsilon\log\epsilon)\sum_{i=1}^\kappa a(\xi_{i})+O(\epsilon),
\end{align*}
where $\left|\frac{O(\epsilon)}{\epsilon}\right|\leq C$ uniformly on compact sets of $\Gamma$. Since $\tilde\xi_{i},i=1,\ldots,\kappa$ are non-degenerate critical points of $a$
constrained to $\partial\Omega$, then there exist $\vec \xi^{(\epsilon)}=(\xi_1^{(\epsilon)},\ldots,\xi_\kappa^{(\epsilon)})$ such that $\xi_{i}^{(\epsilon)}\rightarrow\tilde\xi_{i}$
as $\epsilon\rightarrow0$, and $\nabla_{\vec \xi}\tilde J_\epsilon(\vec \xi^{(\epsilon)},\vec\Lambda,\vec t)=0$.

Moreover, by Proposition \ref{prop5.2},
\begin{align*}
\begin{split}
&J_\epsilon(\vec \xi^{(\epsilon)},\vec\Lambda,\vec t)-(c_1+c_2\epsilon\log\epsilon)\sum_{i=1}^\kappa a(\xi_{i}^{(\epsilon)})\\
=&\epsilon\sum_{i=1}^\kappa \Big[c_3a(\xi_{i}^{(\epsilon)})+c_4\langle\nabla a(\xi_{i}^{(\epsilon)}),\gamma(\xi_{i}^{(\epsilon)})\rangle t_i+\begin{cases} c_5a(\xi_{i}^{(\epsilon)})
\Big(\frac{\Lambda_i}{2t_i}\Big)^{n-2}\  &if\ p_0>\frac n{n-2}\\
 c_5'a(\xi_{i}^{(\epsilon)})\Big(\frac{\Lambda_i}{2t_i}\Big)^{(n-2)p_0-2}\  &if\ p_0<\frac n{n-2}
\end{cases}\\&-c_6 a(\xi_{i}^{(\epsilon)})\log \Lambda_i\Big]
+O(\epsilon^{1+\sigma})\\
=&\epsilon\sum_{i=1}^\kappa \Big[c_3a(\tilde\xi_{i})+c_4\langle\nabla a(\tilde\xi_{i}),\gamma(\tilde\xi_{i})\rangle t_i+\begin{cases} c_5a(\tilde\xi_{i})
\Big(\frac{\Lambda_i}{2t_i}\Big)^{n-2}\  &if\ p_0>\frac n{n-2}\\
 c_5'a(\tilde\xi_{i})\Big(\frac{\Lambda_i}{2t_i}\Big)^{(n-2)p_0-2}\  &if\ p_0<\frac n{n-2}
\end{cases}\\&-c_6 a(\tilde\xi_{i})\log \Lambda_i\Big]
+O(\epsilon^{1+\sigma}).
\end{split}
\end{align*}
Observing that
\begin{align*}
\begin{split}
(\vec\Lambda,\vec t)\rightarrow \sum_{i=1}^\kappa \Big[c_4\langle\nabla a(\tilde\xi_{i}),\gamma(\tilde\xi_{i})\rangle t_i+c_5a(\tilde\xi_{i})
\Big(\frac{\Lambda_i}{2t_i}\Big)^{n-2}-c_6 a(\tilde\xi_{i})\log \Lambda_i\Big]
\end{split}
\end{align*}
has a minimum point which is stable up to $C^0$-perturbations, we can check that  there exists $(\vec \Lambda_\epsilon,\vec t_\epsilon)$ such that
 $\nabla_{\vec\Lambda,\vec t}\tilde J_\epsilon(\vec \xi^{(\epsilon)},\vec \Lambda_\epsilon,\vec t_\epsilon)=0$. Therefore, $\tilde J_\epsilon$ has a critical point.

\end{proof}

\medskip

\section*{Appendix}

\appendix

\section{Energy expansion}
\renewcommand{\theequation}{A.\arabic{equation}}

Note that when $n\geq3$ and $p_0>\frac n{n-2}$, the following positive quantities are well-defined:
\begin{align}
\begin{split}
&A_1=\int_{\R^n}U^{q_0+1},\ \ A_2=\int_{\R^n}U^{q_0},\ \ A_3=\int_{\R^n}U^{q_0+1} \log U,\\
&B_1=\int_{\R^n}V^{p_0+1}=A_1,\ \ B_2=\int_{\R^n}V^{p_0},\ \ B_3=\int_{\R^n}V^{p_0+1} \log V.
\end{split}
\end{align}
Moreover, if $p_0<\frac n{n-2}$, since $((n-2)p_0-2)q_0=\frac{q_0(p_0+1)n}{q_0+1}>n,((n-2)p_0-2)(q_0+1)>n$ and $(q_0+1)(n-2)>n$, then
$A_i(i=1,2,3),B_i(i=1,3)$ are all well-defined as well.

\medskip
Recall the numbers $a_{n,p}$ and $b_{n,p}$ appeared in Lemma \ref{lemasym}.
We define the main term of $I_0$
\begin{align}\label{J0}
 I_0(u,v)=\int_{\Omega}a(x)\nabla u\cdot\nabla v
-\frac1{p_0+1}\int_{\Omega}a(x)|v|^{p_0+1}dx-\frac1{q_0+1}\int_{\Omega}a(x)|u|^{q_0+1}dx.
\end{align}
Correspondingly,
\begin{align}\label{J0}
\begin{split}
\tilde J_0:&= I_0(W_1,W_2)\\
&=\int_{\Omega}a(x)\nabla W_1\cdot\nabla W_2
-\frac1{p_0+1}\int_{\Omega}a(x)|W_2|^{p_0+1}dx-\frac1{q_0+1}\int_{\Omega}a(x)|W_1|^{q_0+1}dx.
\end{split}
\end{align}

We start with some key estimates.

\begin{Lem}\label{lema1}
For $i=1,\dots,\kappa$, there holds that
\begin{align*}
&\int_{B_\eta(\xi_{i})}a(x)U_i^{q_0+1}dx=A_1\left(a(\xi_{i})+\langle\nabla a(\xi_{i}),\gamma(\xi_{i})\rangle t_i\epsilon\right)+O(\epsilon^{1+\sigma}),\\
&\int_{B_\eta(\xi_{i})}a(x)V_i^{p_0+1}dx=B_1\left(a(\xi_{i})+\langle\nabla a(\xi_{i}),\gamma(\xi_{i})\rangle t_i\epsilon\right)+O(\epsilon^{1+\sigma}).
\end{align*}
\end{Lem}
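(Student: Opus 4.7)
The natural approach is to perform the rescaling $x = \xi_{i,\epsilon} + \delta_{i,\epsilon}\,y$, which maps $B_\eta(\xi_i)$ onto a ball $\widetilde B_i \subset \mathbb{R}^n$ of radius $(\eta+O(\epsilon))/\delta_{i,\epsilon}$ around the origin. Under this change of variables, $U_i(x)$ is replaced by its standard profile $U_{0,1}(y)$, the volume element absorbs $\delta_{i,\epsilon}^n$ against the $\delta^{-n}$ from $U_i^{q_0+1}$, and the integral becomes
\begin{equation*}
\int_{B_\eta(\xi_i)} a(x)\,U_i^{q_0+1}\,dx = \int_{\widetilde B_i} a(\xi_{i,\epsilon}+\delta_{i,\epsilon} y)\,U_{0,1}(y)^{q_0+1}\,dy.
\end{equation*}

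Next, I would Taylor expand the potential around $\xi_{i,\epsilon}$, writing
\begin{equation*}
a(\xi_{i,\epsilon}+\delta_{i,\epsilon} y)=a(\xi_{i,\epsilon})+\delta_{i,\epsilon}\,\nabla a(\xi_{i,\epsilon})\cdot y + O\!\bigl(\delta_{i,\epsilon}^2|y|^2\bigr),
\end{equation*}
and then expand $a(\xi_{i,\epsilon})=a(\xi_i+\epsilon t_i \nu(\xi_i))$ as $a(\xi_i)+\epsilon t_i\langle\nabla a(\xi_i),\nu(\xi_i)\rangle+O(\epsilon^2)$. The crucial observation is that the first-order $y$-term integrates to zero against $U_{0,1}^{q_0+1}$ by the radial symmetry of $U_{0,1}$ (cf.\ \cite{alvino-lions-trombetti}), so that only the zeroth-order piece contributes up to controllable remainders. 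This yields
\begin{equation*}
\int_{\widetilde B_i}a(\xi_{i,\epsilon})U_{0,1}^{q_0+1}\,dy + (\text{linear in }\nabla a\cdot y\text{ term, vanishing}) + (\text{quadratic remainder}).
\end{equation*}

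Two remainders then need to be shown to be $O(\epsilon^{1+\sigma})$. The first is the quadratic-in-$\delta_{i,\epsilon}$ error from the Taylor expansion, which is bounded by $C\delta_{i,\epsilon}^2\int_{\mathbb{R}^n}|y|^2 U_{0,1}^{q_0+1}\,dy$; here one must verify that this integral converges, which follows from the decay in Lemma \ref{lemasym} provided $(q_0+1)(n-2)-2>n$ (true under \eqref{P} in both ranges $p_0\gtrless n/(n-2)$, since $q_0$ is the larger exponent and well above $\tfrac{n+2}{n-2}$), and since $\delta_{i,\epsilon}^2$ is a positive power of $\epsilon$ strictly larger than $1$ in both scalings of \eqref{set}, this is $O(\epsilon^{1+\sigma})$. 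The second is the tail error from replacing $\widetilde B_i$ by $\mathbb{R}^n$ in the leading term: this contributes $\int_{|y|\gtrsim \eta/\delta_{i,\epsilon}}U_{0,1}^{q_0+1}\,dy = O((\delta_{i,\epsilon}/\eta)^{(q_0+1)(n-2)-n})$, again $O(\epsilon^{1+\sigma})$ by the same decay.

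Combining these, the leading term equals $A_1\,a(\xi_{i,\epsilon})=A_1\bigl(a(\xi_i)+\epsilon t_i\langle\nabla a(\xi_i),\nu(\xi_i)\rangle\bigr)+O(\epsilon^2)$, giving the claimed expansion (identifying $\gamma(\xi_i)=\nu(\xi_i)$). The estimate for the $V_i$ integral is identical, with $(p_0,q_0)$ swapped and $A_1$ replaced by $B_1$; note that $B_1=A_1$ by the Pohozaev-type identity quoted in the definitions. The main technical point — and the only step where the two ranges of $p_0$ in \eqref{P} need to be checked separately — is the verification that the quadratic moment $\int|y|^2 U_{0,1}^{q_0+1}\,dy$ converges and that the chosen $\sigma>0$ is uniform; this is immediate in the $p_0>n/(n-2)$ case from \eqref{asymU}, and in the $p_0<n/(n-2)$ case follows since then $q_0$ is even larger, making $U_{0,1}^{q_0+1}$ decay faster than needed.
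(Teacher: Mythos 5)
Your overall strategy (rescale around the concentration point, Taylor-expand $a$, drop the linear term, control the Taylor remainder and the tail by the decay of the bubble) is exactly the paper's argument, and it is carried out correctly in the range $p_0>\frac n{n-2}$. However, the one step you yourself single out as "the main technical point" — the case $p_0<\frac n{n-2}$ — is justified incorrectly. You argue that in this range "$q_0$ is even larger, making $U_{0,1}^{q_0+1}$ decay faster than needed," implicitly still using the decay $U_{0,1}\sim r^{-(n-2)}$ and the condition $(q_0+1)(n-2)-2>n$. But by Lemma \ref{lemasym}, for $p_0<\frac n{n-2}$ the bubble $U_{0,1}$ decays only like $r^{-((n-2)p_0-2)}$, which is \emph{slower} than $r^{-(n-2)}$; the tail bound $O\big((\delta/\eta)^{(q_0+1)(n-2)-n}\big)$ you wrote is simply not the right exponent there. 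The correct verification goes through the hyperbola identity $((n-2)p_0-2)(q_0+1)=n(p_0+1)$, which gives the tail $O\big((\delta/\eta)^{np_0}\big)=O\big(\epsilon^{\frac{np_0}{(n-2)p_0-2}}\big)=O(\epsilon^{1+\sigma})$ (this is precisely how the paper handles it), and the moment conditions become $np_0>1$ and $np_0>2$, which hold under \eqref{P}. So the conclusion survives, but not for the reason you give; as written the argument for the second range would not stand scrutiny.

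Two smaller points. First, for the $V_i$-integral the situation is not obtained by "swapping $(p_0,q_0)$": $V_{0,1}$ always decays like $r^{-(n-2)}$, and for small $p_0$ (e.g. $n=4$, $p_0<2$, or $n=5$, $p_0<\frac43$) the quadratic moment $\int|y|^2V_{0,1}^{p_0+1}$ actually \emph{diverges}; one must instead bound the truncated moment over $B_{\eta/\delta_{i,\epsilon}}(0)$, which contributes $\delta_{i,\epsilon}^2(\eta/\delta_{i,\epsilon})^{n+2-(n-2)(p_0+1)}=O(\epsilon^{1+\sigma})$, so again the estimate holds but not by the convergence argument you state. Second, the ball should be taken centered at $\xi_{i,\epsilon}$ (as in the paper's proof; note $\eta\sim\epsilon$ here, so $B_\eta(\xi_i)$ and $B_\eta(\xi_{i,\epsilon})$ are genuinely different sets): only then is the rescaled domain a ball centered at the origin and the oddness cancellation of the term $\delta_{i,\epsilon}\langle\nabla a,y\rangle$ legitimate. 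This is harmless in any case, since by \eqref{set} one has $\delta_{i,\epsilon}=O(\epsilon^{1+\sigma})$ in both regimes, so that linear term is negligible even without symmetry, provided the first moment is finite — which again must be checked with the correct decay rates.
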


\begin{proof}

For $i=1,\dots,\kappa$, note that $\xi_{i,\epsilon}=\xi_{i}+\eta_i\nu(\xi_{i})$.

First for $p_0>\frac n{n-2}$, there exists small $\sigma\leq p_0-\frac n{n-2}<q_0-\frac n{n-2}$ such that
\begin{align}\label{aa}
\begin{split}
&\int_{B_\eta(\xi_{i,\epsilon})}a(x)U_i^{q_0+1}dx=\int_{B_{\frac{\eta}{\delta_{i,\epsilon}}}(0)}a(\delta_{i,\epsilon} y+\xi_{i}+\eta_i\gamma(\xi_{i}))U^{q_0+1}(y)dy\\
=&\int_{B_{\frac{\eta}{\delta_{i,\epsilon}}}(0)}a(\xi_{i})U^{q_0+1}(y)dy+\int_{B_{\frac{\eta}{\delta_{i,\epsilon}}}(0)}(a(\delta_{i,\epsilon} y+\xi_{i}+\eta_i\gamma(\xi_{i}))-a(\xi_{i}))U^{q_0+1}(y)dy\\
=&A_1a(\xi_{i})+O\Big(\int_{\frac{\eta}{\delta_{i,\epsilon}}}^\infty\frac1{(1+|y|^{n-2})^{q_0+1}}dy\Big)\\
&+\int_{B_{\frac{\eta}{\delta_{i,\epsilon}}}(0)}(\langle\nabla a(\xi_{i}),\gamma(\xi_{i})\eta_i\rangle+\delta_{i,\epsilon}\langle\nabla a(\xi_{i}),y\rangle+h(y))U^{q_0+1}(y)dy\\
=&A_1a(\xi_{i})+O(\epsilon^{1+\sigma})+A_1\langle\nabla a(\xi_{i}),\gamma(\xi_{i})\eta_i\rangle+O(\eta_i^2),
\end{split}\end{align}
where $|h(y)|\leq c(\delta_{i,\epsilon}^2|y|^2+\delta_{i,\epsilon}\eta_i|y|+\eta_i^2))$.

Next in the case of $p_0<\frac n{n-2}$,
\begin{align*}
&\int_{B_\eta(\xi_{i,\epsilon})}a(x)U_i^{q_0+1}dx=\int_{B_{\frac{\eta}{\delta_{i,\epsilon}}}(0)}a(\delta_{i,\epsilon} y+\xi_{i}+\eta_i\gamma(\xi_{i}))U^{q_0+1}(y)dy\\
=&A_1a(\xi_{i})+O\Big(\int_{\frac{\eta}{\delta_{i,\epsilon}}}^\infty\frac1{(1+|y|^{(n-2)p_0-2})^{q_0+1}}dy\Big)\\
&+\int_{B_{\frac{\eta}{\delta_{i,\epsilon}}}(0)}(\langle\nabla a(\xi_{i}),\gamma(\xi_{i})\eta_i\rangle+\delta_{i,\epsilon}\langle\nabla a(\xi_{i}),y\rangle+g(y))U^{q_0+1}(y)dy\\
=&A_1a(\xi_{i})+O(\epsilon^{1+\sigma})+A_1\langle\nabla a(\xi_{i}),\gamma(\xi_{i})\eta_i\rangle+O\Big(\epsilon^{1+\sigma}\Big),
\end{align*}
where   $|g(y)|\leq c(\delta_{i,\epsilon}^2|y|^2+\delta_{i,\epsilon}\eta_i|y|+\eta_i^2))$, and we have used the fact that
\begin{align*}
&\int_{\frac{\eta}{\delta_{i,\epsilon}}}^\infty\frac1{(1+|y|^{(n-2)p_0-2})^{q_0+1}}dy
=O\Big(\Big(\frac\delta\eta\Big)^{p_0n}\Big)=O\Big(\epsilon^{\frac{p_0n}{(n-2)p_0-2}}\Big)=O\Big(\epsilon^{1+\sigma}\Big).
\end{align*}

Similar estimate as in \eqref{aa}, it holds that
\begin{align*}
&\int_{B_\eta(\xi_{i,\epsilon})}a(x)V_i^{p_0+1}dx=B_1(a(\xi_{i})+\langle\nabla a(\xi_{i}),\gamma(\xi_{i})\eta_i\rangle)+O(\epsilon^{1+\sigma}).
\end{align*}

\end{proof}

\begin{Lem}\label{lema2}If $p_0>\frac n{n-2}$, then
for $i=1,\dots,\kappa$  there holds  that
\begin{align*}
%&\int_{B_\eta(\xi_{i})}a(x)U_i^{q_0}(PU_i-U_i)dx=-\frac{a_{n,p_0}A_2}{\gamma_n}a(\xi_{i})\epsilon^{\frac{n}{n-2}\frac{p_0+1}{q_0+1}}
%\Big(\frac{\Lambda_i}{2t_i}\Big)^{\frac{p_0+1}{q_0+1}n}+O(\epsilon^{\frac{n}{n-2}\frac{p_0+1}{q_0+1}+\sigma}),\\
&\int_{B_\eta(\xi_{i,\epsilon})}a(x)V_i^{p_0}(PV_i-V_i)dx=-\frac{b_{n,p_0}B_2}{\gamma_n}a(\xi_{i})\epsilon\Big(\frac{\Lambda_i}{2t_i}\Big)^{n-2}+O(\epsilon^{1+\sigma}).
\end{align*}
\end{Lem}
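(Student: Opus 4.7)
The plan is to use Lemma \ref{lemP} to replace $PV_i-V_i$ by $-\tfrac{b_{n,p_0}}{\gamma_n}\delta_{i,\epsilon}^{n/(q_0+1)}H(\cdot,\xi_{i,\epsilon})$ modulo a controllable $L^\infty$-remainder, then evaluate the resulting integrals by rescaling the bubble to unit width and exploiting the near-boundary expansion of $H$ from Lemma \ref{lemH}. Concretely, Lemma \ref{lemP} gives
\[
PV_i-V_i=-\tfrac{b_{n,p_0}}{\gamma_n}\delta_{i,\epsilon}^{n/(q_0+1)}H(x,\xi_{i,\epsilon})+R_{2,\xi_{i,\epsilon},\delta_{i,\epsilon}}(x),
\]
with $\|R_{2,\xi_{i,\epsilon},\delta_{i,\epsilon}}\|_{L^\infty}=O(\delta_{i,\epsilon}^{n/(q_0+1)+1}/\eta_i^{n-1})$, so that the integral splits into a main piece involving $H$ and a remainder piece.

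For the remainder, a rescaling $y=(x-\xi_{i,\epsilon})/\delta_{i,\epsilon}$ produces $\int_\Omega V_i^{p_0}\,dx=B_2\,\delta_{i,\epsilon}^{n/(p_0+1)}$, which is finite since $p_0>n/(n-2)$. Hence
\[
\Bigl|\int_{B_\eta(\xi_{i,\epsilon})}a\,V_i^{p_0}R_{2,\xi_{i,\epsilon},\delta_{i,\epsilon}}\,dx\Bigr|\leq C\|R_{2,\xi_{i,\epsilon},\delta_{i,\epsilon}}\|_\infty\,\delta_{i,\epsilon}^{n/(p_0+1)}=O\bigl(\delta_{i,\epsilon}^{n-1}/\eta_i^{n-1}\bigr)=O\bigl((\delta_{i,\epsilon}/\eta_i)^{n-1}\bigr),
\]
using the critical identity $n/(p_0+1)+n/(q_0+1)=n-2$. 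Substituting the scalings $\delta_{i,\epsilon}\sim\epsilon^{(n-1)/(n-2)}$ and $\eta_i\sim\epsilon$ gives $O(\epsilon^{(n-1)/(n-2)})$, which equals $O(\epsilon^{1+\sigma})$ with $\sigma=1/(n-2)>0$.

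For the main piece, the same rescaling recasts the integral as
\[
-\tfrac{b_{n,p_0}}{\gamma_n}\,\delta_{i,\epsilon}^{n/(q_0+1)+n/(p_0+1)}\int_{B_{\eta/\delta_{i,\epsilon}}(0)}a(\xi_{i,\epsilon}+\delta_{i,\epsilon} y)\,V_{0,1}^{p_0}(y)\,H(\xi_{i,\epsilon}+\delta_{i,\epsilon} y,\xi_{i,\epsilon})\,dy.
\]
I would then Taylor-expand $a$ around $\xi_i$ and $H(\cdot,\xi_{i,\epsilon})$ around $\xi_{i,\epsilon}$; odd-order-in-$y$ terms vanish by the radial symmetry of $V_{0,1}$, the truncation error from extending the $y$-integration to all of $\mathbb R^n$ is $O((\delta_{i,\epsilon}/\eta_i)^{p_0(n-2)-n})$ (a positive power of $\epsilon$ since $p_0(n-2)>n$), and Lemma \ref{lemH} yields $H(\xi_{i,\epsilon},\xi_{i,\epsilon})=\gamma_n/(2\eta_i)^{n-2}(1+O(\eta_i))$. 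Assembling with $n/(q_0+1)+n/(p_0+1)=n-2$ and $(\delta_{i,\epsilon}/(2\eta_i))^{n-2}=\epsilon(\Lambda_i/(2t_i))^{n-2}$ produces the claimed leading term $-\tfrac{b_{n,p_0}B_2}{\gamma_n}a(\xi_i)\epsilon(\Lambda_i/(2t_i))^{n-2}$ (up to the prefactor bookkeeping of $\gamma_n$).

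The main obstacle I anticipate is controlling the higher-order Taylor remainders in the expansions of $a$ and $H(\cdot,\xi_{i,\epsilon})$ around $\xi_{i,\epsilon}$: the weighted tail integrals $\int_{B_{\eta/\delta_{i,\epsilon}}}|y|^k V_{0,1}^{p_0}(y)\,dy$ may grow as positive powers of $\eta/\delta_{i,\epsilon}\to\infty$ when $p_0(n-2)$ is close to $n$, while the derivatives $|\nabla_x^k H(x,\xi_{i,\epsilon})|$ blow up like $1/\eta_i^{n-2+k}$ near $x=\xi_{i,\epsilon}$ (as dictated by the reflection formula $H\approx\gamma_n/|\bar x-\xi|^{n-2}$ in Lemma \ref{lemH}). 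One has to carefully balance these growths against the favorable smallness $\delta_{i,\epsilon}/\eta_i=\epsilon^{1/(n-2)}$ in order to certify that every remaining correction is of order $O(\epsilon^{1+\sigma})$ for some $\sigma>0$.
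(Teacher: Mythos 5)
Your plan coincides with the paper's own proof: it uses the Lemma \ref{lemP} decomposition of $PV_i-V_i$, bounds the $R_{2,\xi_{i,\epsilon},\delta_{i,\epsilon}}$-contribution by $\|R_{2}\|_{L^\infty}\int_\Omega V_i^{p_0}=O\big((\delta_{i,\epsilon}/\eta_i)^{n-1}\big)=O(\epsilon^{1+\sigma})$, and evaluates the $H$-term by rescaling, the reflection expansion of Lemma \ref{lemH} and a Taylor expansion of $a$, and the balancing you worry about does close exactly as the paper does it, since $(n-2)p_0>n$ makes every correction (truncation, $|y|$-weighted tails, $d(x)$-errors in $H$) a positive power of $\delta_{i,\epsilon}/\eta_i=\epsilon^{1/(n-2)}$ beyond the leading order $\epsilon$. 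Concerning your ``prefactor bookkeeping'': if one keeps $H(x,\xi_{i,\epsilon})\approx\gamma_n|x-\bar\xi_{i,\epsilon}|^{2-n}$ as Lemma \ref{lemH} states, the $\gamma_n$ cancels the prefactor $b_{n,p_0}/\gamma_n$, while the paper's displayed computation inserts $|x-\bar\xi_{i,\epsilon}|^{2-n}$ without the $\gamma_n$ and thus retains $1/\gamma_n$ in the stated constant; this discrepancy only changes the harmless positive constant in front of $a(\xi_i)\epsilon(\Lambda_i/(2t_i))^{n-2}$ and not the structure of the expansion or its later use.
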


\begin{proof}
Using Lemma \ref{lemH} and Lemma \ref{lemP}, we have that
\begin{align*}
&\int_{B_\eta(\xi_{i,\epsilon})}a(x)V_i^{p_0}(PV_i-V_i)dx\\
=&\int_{B_\eta(\xi_{i,\epsilon})}a(x)V_i^{p_0}\Big(-\frac{b_{n,p_0}}{\gamma_n}\delta^{\frac{n}{q_0+1}}H(x,\xi_{i,\epsilon})+R_{2,\delta_{i,\epsilon},\xi_{i,\epsilon}}(x)\Big)dx\\
=&-\frac{b_{n,p_0}}{\gamma_n}\int_{B_{\frac{\eta}{\delta_{i,\epsilon}}}(0)}\delta_{i,\epsilon}^{n-2}a(\delta_{i,\epsilon}y+\xi_{i,\epsilon})
H(\delta_{i,\epsilon}y+\xi_{i,\epsilon},\xi_{i,\epsilon})V^{p_0}(y)dy
+O\left(\frac{\delta_{i,\epsilon}^{\frac n{q_0+1}+1}}{\eta_i^{n-1}}\int_{B_\eta(\xi_{i,\epsilon})}V_i^{p_0}\right)\\
=&-\frac{b_{n,p_0}}{\gamma_n}\int_{B_{\frac{\eta}{\delta_{i,\epsilon}}}(0)}\delta_{i,\epsilon}^{n-2}\frac{a(\delta_{i,\epsilon}y+\xi_{i,\epsilon})}{|\delta_{i,\epsilon}y+\xi_{i,\epsilon}-\bar\xi_{i,\epsilon}|^{n-2}}
V^{p_0}(y)dy
+O\Big(\frac{\delta_{i,\epsilon}^{n-1}}{\eta_i^{n-1}}\Big)\\
=&-\frac{b_{n,p_0}}{\gamma_n}a(\xi_{i})B_2\Big(\frac{\delta_{i,\epsilon}}{2\eta_i}\Big)^{n-2}+O\Big(\frac{\delta_{i,\epsilon}^{n-1}}{\eta_i^{n-1}}\Big)\\
&-\frac{b_{n,p_0}}{\gamma_n}\Big(\frac{\delta_{i,\epsilon}}{2\eta_i}\Big)^{n-2}
\int_{B_{\frac{\eta}{\delta_{i,\epsilon}}}(0)}\Big(\langle\nabla a(\xi_{i}),\gamma(\xi_{i})\rangle\eta_i+\delta_{i,\epsilon}\langle\nabla a(\xi_{i}),y\rangle+h(y)\Big)V^{p_0}(y)dy\\
=&-\frac{b_{n,p_0}}{\gamma_n}a(\xi_{i})B_2\Big(\frac{\delta_{i,\epsilon}}{2\eta_i}\Big)^{n-2}+O(\epsilon^{1+\sigma})
=-\frac{b_{n,p_0}}{\gamma_n}a(\xi_{i})B_2\epsilon\Big(\frac{\Lambda_i}{2t_i}\Big)^{n-2}+O(\epsilon^{1+\sigma}).
\end{align*}
\iffalse
Similar estimates gives

\fi
We conclude the proof.

\end{proof}

\begin{Lem}\label{lema2'}If $p_0<\frac n{n-2}$,
for $i=1,\dots,\kappa$, we have
\begin{align*}
%&\int_{B_\eta(\xi_{i})}a(x)U_i^{q_0}(PU_i-U_i)dx=-\frac{a_{n,p_0}A_2}{\gamma_n}a(\xi_{i})\epsilon^{\frac{n}{n-2}\frac{p_0+1}{q_0+1}}
%\Big(\frac{\Lambda_i}{2t_i}\Big)^{\frac{p_0+1}{q_0+1}n}+O(\epsilon^{\frac{n}{n-2}\frac{p_0+1}{q_0+1}+\sigma}),\\
&\int_{B_\eta(\xi_{i,\epsilon})}a(x)V_i^{p_0}(PV_i-V_i)dx=-\frac{b_{n,p_0}\mathcal I_i}{\gamma_n}a(\xi_{i})\epsilon\Big(\frac{\Lambda_i}{2t_i}\Big)^{(n-2)p_0-2}+O(\epsilon^{1+\sigma}),
\end{align*}
where $\mathcal I_i>0$ is some constant.
\end{Lem}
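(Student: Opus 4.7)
The plan is to adapt Lemma~\ref{lema2} to the slow-decay regime $p_0<\frac{n}{n-2}$. Starting from Lemma~\ref{lemP'2}, decompose
\begin{align*}
PV_i-V_i=-\frac{b_{n,p_0}}{\gamma_n}\delta_{i,\epsilon}^{n/(q_0+1)}H(x,\xi_{i,\epsilon})+R_{2,\xi_{i,\epsilon},\delta_{i,\epsilon}}(x),
\end{align*}
and split the integral into the Green's-function term and a remainder. The $R_2$-remainder is controlled as in the proof of Lemma~\ref{lema2}, using $\|R_2\|_\infty=O(\delta_{i,\epsilon}^{n/(q_0+1)+1}/\eta_i^{n-1})$ together with the far-field asymptotic $V(y)\sim b_{n,p_0}|y|^{-(n-2)}$ to bound $\int_{B_\eta(\xi_{i,\epsilon})}V_i^{p_0}$; the scalings $\delta_{i,\epsilon}=\epsilon^{((n-2)p_0-1)/((n-2)p_0-2)}\Lambda_i$ and $\eta_i=\epsilon t_i$ then yield $O(\epsilon^{1+\sigma})$.

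The essential new difficulty is that $V^{p_0}$ is \emph{no longer integrable} on $\R^n$, so the constant $B_2=\int V^{p_0}$ from Lemma~\ref{lema2} is unavailable. The dominant mass of $\int V_i^{p_0}H$ instead comes from the far-field annulus $\delta_{i,\epsilon}\ll|x-\xi_{i,\epsilon}|\lesssim\eta_i$, where the asymptotic \eqref{asymV} gives $V_i(x)\sim b_{n,p_0}\,\delta_{i,\epsilon}^{n/(q_0+1)}|x-\xi_{i,\epsilon}|^{-(n-2)}$. After the macroscale change of variables $z=(x-\xi_{i,\epsilon})/\eta_i$, Lemma~\ref{lemH} gives $H(\xi_{i,\epsilon}+\eta_i z,\xi_{i,\epsilon})\approx\gamma_n\,\eta_i^{-(n-2)}|z-2\nu(\xi_i)|^{-(n-2)}$, and combining with the Jacobian $\eta_i^n\,dz$ and the prefactor $\delta_{i,\epsilon}^{n/(q_0+1)}$ produces a main term of the form
\begin{align*}
-\,b_{n,p_0}^{p_0+1}\,a(\xi_i)\,\delta_{i,\epsilon}^{n(p_0+1)/(q_0+1)}\eta_i^{2-(n-2)p_0}\int\frac{dz}{|z|^{(n-2)p_0}|z-2\nu(\xi_i)|^{n-2}}+\text{lower order}.
\end{align*}
The critical-hyperbola identity $\frac{n(p_0+1)}{q_0+1}=(n-2)p_0-2$ (a direct consequence of \eqref{p0q0}) then rearranges the $\delta,\eta$ factors into $\bigl(\delta_{i,\epsilon}/(2\eta_i)\bigr)^{(n-2)p_0-2}=\epsilon\bigl(\Lambda_i/(2t_i)\bigr)^{(n-2)p_0-2}$; the geometric integral is finite precisely because $(n-2)p_0<n$ at infinity (and the near-origin singularity is handled by the inner region), and absorbing the universal constants defines the positive quantity $\mathcal{I}_i$.

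The main obstacle is the asymptotic matching between the inner region $|x-\xi_{i,\epsilon}|\lesssim\rho$ (where the exact profile of $V$ is needed and $H\approx H(\xi_{i,\epsilon},\xi_{i,\epsilon})$) and the outer region $\rho\lesssim|x-\xi_{i,\epsilon}|\lesssim\eta_i$ (where the power-law tail of $V$ applies but $H$ varies on scale $\eta_i$), for any intermediate scale $\delta_{i,\epsilon}\ll\rho\ll\eta_i$. One must verify that the inner contribution together with the subleading corrections arising from $d(x)/|\bar x-\xi|^{n-2}$ in Lemma~\ref{lemH}, the Taylor expansion $a(\xi_{i,\epsilon}+\eta_i z)=a(\xi_i)+O(\epsilon)$, and the boundary curvature at $\xi_i$ all contribute only at order $O(\epsilon^{1+\sigma})$, so that the leading constant is $\rho$-independent and the claimed formula emerges.
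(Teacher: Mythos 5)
Your outline is correct and, at the structural level, coincides with the paper's proof: both start from Lemma \ref{lemP'2} and Lemma \ref{lemH}, writing $PV_i-V_i=-\frac{b_{n,p_0}}{\gamma_n}\delta_{i,\epsilon}^{n/(q_0+1)}H(\cdot,\xi_{i,\epsilon})+R_{2,\xi_{i,\epsilon},\delta_{i,\epsilon}}$, and both dispose of the remainder via $\|R_2\|_{L^\infty}\int_{B_\eta}V_i^{p_0}=O\big((\delta_{i,\epsilon}/\eta_i)^{(n-2)p_0-1}\big)=O(\epsilon^{1+\sigma})$, which is exactly the error term in the paper. Where you genuinely diverge is the evaluation of the $H$-term: the paper stays in the micro variable $y=(x-\xi_{i,\epsilon})/\delta_{i,\epsilon}$, keeps $V^{p_0}(y)$ exact, and defines $\mathcal I_i$ as the $\epsilon$-dependent integral $\int_{B_{\eta/\delta_{i,\epsilon}}}\delta_{i,\epsilon}^{n-(n-2)p_0}V^{p_0}(y)\,|\delta_{i,\epsilon}y+\xi_{i,\epsilon}-\bar\xi_{i,\epsilon}|^{-(n-(n-2)p_0)}dy$, proving only that it is $O(1)$ (with the mass indeed sitting at $|y|\sim\eta/\delta_{i,\epsilon}$, as you observed); you instead pass to the macro variable $z=(x-\xi_{i,\epsilon})/\eta_i$, replace $V_i$ by its tail $b_{n,p_0}\delta_{i,\epsilon}^{n/(q_0+1)}|x-\xi_{i,\epsilon}|^{-(n-2)}$, and identify the limiting coefficient explicitly as $b_{n,p_0}^{p_0+1}$ times the fixed geometric integral $\int_{|z|\lesssim1}|z|^{-(n-2)p_0}|z\pm2\nu(\xi_i)|^{-(n-2)}dz$, using correctly the hyperbola identity $\frac{n(p_0+1)}{q_0+1}=(n-2)p_0-2$ to produce the factor $\epsilon(\Lambda_i/2t_i)^{(n-2)p_0-2}$. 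Your route buys more (a genuine limit constant, which is closer to what the statement literally asserts about $\mathcal I_i$) but costs more: to reach the stated $O(\epsilon^{1+\sigma})$, rather than merely $o(\epsilon)$, the inner/outer matching must be made quantitative, i.e.\ one needs a rate in the far-field expansion of $V$ (for instance $|V(r)-b_{n,p_0}r^{-(n-2)}|=O(r^{-(n-1)})$, as invoked in the proof of Lemma \ref{lemP}) together with the bound that the inner region $|x-\xi_{i,\epsilon}|\le\rho$, $\rho=\eta_i\epsilon^{\theta}$ with $\theta$ small, contributes $O\big(\epsilon(\rho/\eta_i)^{n-(n-2)p_0}\big)$; you name these verifications but do not carry them out. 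Two small points: the finiteness of your geometric integral is a matter of local integrability at $z=0$, which is precisely where $(n-2)p_0<n$ enters (there is no issue at $z=\pm2\nu$, which lies outside the unit ball), and your phrasing about "infinity" should be read in the micro variable, where the divergence of $\int_{\R^n}V^{p_0}$ is what pushes the mass to scale $\eta_i$. The paper avoids the matching altogether, at the price of leaving $\mathcal I_i$ as a bounded, $\epsilon$-dependent quantity rather than the explicit constant your computation would produce.
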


\begin{proof}
Using Lemma \ref{lemH} and Lemma \ref{lemP'2}, we have that
\begin{align*}
&\int_{B_\eta(\xi_{i,\epsilon})}a(x)V_i^{p_0}(PV_i-V_i)dx\\
=&\int_{B_\eta(\xi_{i,\epsilon})}a(x)V_i^{p_0}\Big(-\frac{b_{n,p_0}}{\gamma_n}\delta^{\frac{n}{q_0+1}}H(x,\xi_{i,\epsilon})+R_{2,\delta_{i,\epsilon},\xi_{i,\epsilon}}(x)\Big)dx\\
%=&-\frac{b_{n,p_0}}{\gamma_n}\int_{B_{\frac{\eta}{\delta_{i,\epsilon}}}(0)}\delta_{i,\epsilon}^{n-2}a(\delta_{i,\epsilon}y+\xi_{i,\epsilon})H(\delta_{i,\epsilon}y+\xi_{i,\epsilon},\xi_{i,\epsilon})V^{p_0}(y)dy
%+O\left(\frac{\delta_{i,\epsilon}^{\frac n{q_0+1}+1}}{\eta_i^{n-1}}\int_{B_\eta(\xi_{i,\epsilon})}V_i^{p_0}\right)\\
=&-\frac{b_{n,p_0}}{\gamma_n}\int_{B_{\frac{\eta}{\delta_{i,\epsilon}}}(0)}\delta_{i,\epsilon}^{n-2}\frac{a(\delta_{i,\epsilon}y+\xi_{i,\epsilon})}{|\delta_{i,\epsilon}y+\xi_{i,\epsilon}-\bar\xi_{i,\epsilon}|^{n-2}}
V^{p_0}(y)dy
+O\Big(\frac{\delta_{i,\epsilon}^{(n-2)p_0-1}}{\eta_i^{(n-2)p_0-1}}\Big)\\
=&-\frac{b_{n,p_0}}{\gamma_n}a(\xi_{i})\frac{\delta^{n-2}_{i,\epsilon}}{{2\eta_i}^{(n-2)p_0-2}}
\int_{B_{\frac{\eta}{\delta_{i,\epsilon}}}(0)} \frac{V^{p_0}(y)}{|\delta_{i,\epsilon}y+\xi_{i,\epsilon}-\bar\xi_{i,\epsilon}|^{n-(n-2)p_0+\sigma_0}}
dy\\
&-\frac{b_{n,p_0}}{\gamma_n}\frac{\delta^{n-2}_{i,\epsilon}}{{2\eta_i}^{(n-2)p_0-2}}
\int_{B_{\frac{\eta}{\delta_{i,\epsilon}}}(0)}\Big(\langle\nabla a(\xi_{i}),\gamma(\xi_{i})\rangle\eta_i+\delta_{i,\epsilon}\langle\nabla a(\xi_{i}),y\rangle+h(y)\Big)\\
&\qquad \cdot\frac{V^{p_0}(y)}{|\delta_{i,\epsilon}y+\xi_{i,\epsilon}-\bar\xi_{i,\epsilon}|^{n-(n-2)p_0}}dy+O\Big(\frac{\delta_{i,\epsilon}^{(n-2)p_0-1}}{\eta_i^{(n-2)p_0-1}}\Big)\\
=&-\frac{b_{n,p_0}}{\gamma_n}a(\xi_{i}) \Big(\frac{\delta_{i,\epsilon}}{2\eta_i}\Big)^{(n-2)p_0-2}\mathcal I_i+O(\epsilon^{1+\sigma})\\&
=-\frac{b_{n,p_0}}{\gamma_n}a(\xi_{i})\epsilon\Big(\frac{\Lambda_i}{2t_i}\Big)^{(n-2)p_0-2}\mathcal I_i+O(\epsilon^{1+\sigma}),
\end{align*}
where, by setting $x_i=\frac{\xi_{i,\epsilon}-\bar\xi_{i,\epsilon}}{\delta_{i,\epsilon}}$, we denote
\begin{align*}
\mathcal I_i:&=\int_{B_{\frac{\eta}{\delta_{i,\epsilon}}}(0)} \frac{\delta_{i,\epsilon}^{n-(n-2)p_0}V^{p_0}(y)}{|\delta_{i,\epsilon}y+\xi_{i,\epsilon}-\bar\xi_{i,\epsilon}|^{n-(n-2)p_0}}
dy\\&=O\Big(\int_{B_{\frac{\eta}{\delta_{i,\epsilon}}}(0)} \frac{1}{\Big|y+\frac{\xi_{i,\epsilon}-\bar\xi_{i,\epsilon}}{\delta_{i,\epsilon}}\Big|^{n-(n-2)p_0}(1+|y|)^{(n-2)p_0}}
dy\Big)\\
&=O\Big(\int_{B_{\frac{\eta}{\delta_{i,\epsilon}}}(0)} \frac{1}{|y+x_i|^{n-(n-2)p_0}(1+|y|)^{(n-2)p_0}}
dy\Big)\\
&=O\Big( \int_{B_{\frac12}(0)} \frac{1}{|z+\frac{x_i}{|x_i|}|^{n-(n-2)p_0}(\frac1{|x_i|}+|z|)^{(n-2)p_0}}
dz\Big)=O(1).
\end{align*}
%since $|x_i|=\frac{2\eta_i}{\delta_i}\rightarrow+\infty$.

We conclude the proof.

\end{proof}

\begin{Lem}\label{lema3}
For $i,j=1,\dots,\kappa, i\neq j$, there holds that
\begin{align*}
%&\int_{B_\eta(\xi_{i,\epsilon})}a(x)U_i^{q_0}PU_jdx=O(\epsilon^{1+\sigma}),\ \ \
\int_{B_\eta(\xi_{i,\epsilon})}a(x)V_i^{p_0}PV_jdx=O(\epsilon^{1+\sigma}).
\end{align*}
\end{Lem}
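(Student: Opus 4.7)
My plan is to obtain the bound by first establishing a pointwise estimate for $PV_j$ on $B_\eta(\xi_{i,\epsilon})$ that exhibits an extra factor of $\eta$ coming from the boundary vanishing of the Green's function, and then integrating it against $V_i^{p_0}$ using the mass estimates available in each exponent regime. The naive estimate $PV_j\leq V_j$ combined with $V_j(x)\leq C\delta_{j,\epsilon}^{n/(q_0+1)}/|x-\xi_{j,\epsilon}|^{n-2}$ on $B_\eta(\xi_{i,\epsilon})$ only yields $\int V_i^{p_0}PV_j=O((\delta/\eta)^{n-2})=O(\epsilon)$, one power short of the required estimate, so the whole point is to extract this extra boundary cancellation.

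To do so, I would write $PV_j=V_j-(V_j-PV_j)$ and use the representation
\[
V_j-PV_j=\tfrac{b_{n,p_0}}{\gamma_n}\delta_{j,\epsilon}^{n/(q_0+1)}H(\cdot,\xi_{j,\epsilon})-R_{2,\xi_{j,\epsilon},\delta_{j,\epsilon}}
\]
from Lemma \ref{lemP} (or the analogous Lemma \ref{lemP'2} in the second range of $p_0$), together with the asymptotic $V_{0,1}(r)=b_{n,p_0}r^{-(n-2)}+O(r^{-(n-1)})$ coming from \eqref{Uasym} and the boundary expansion $H(x,\xi_{j,\epsilon})=\gamma_n/|\bar x-\xi_{j,\epsilon}|^{n-2}+O(d(x)/|\bar x-\xi_{j,\epsilon}|^{n-2})$ from Lemma \ref{lemH}. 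Since $\tilde\xi_i\ne\tilde\xi_j$, both $|x-\xi_{j,\epsilon}|$ and $|\bar x-\xi_{j,\epsilon}|$ stay uniformly bounded below on $B_\eta(\xi_{i,\epsilon})$ while $|\bar x-x|=2d(x)=O(\eta)$. The singular $|x-\xi_{j,\epsilon}|^{-(n-2)}$ leading tails of $V_j$ and of $\tfrac{b_{n,p_0}}{\gamma_n}\delta_{j,\epsilon}^{n/(q_0+1)}H$ then cancel and the mean value theorem yields
\[
|PV_j(x)|\leq C\delta_{j,\epsilon}^{n/(q_0+1)}\eta+C\,\delta_{j,\epsilon}^{n/(q_0+1)+1}\eta^{-(n-1)}\quad\text{on }B_\eta(\xi_{i,\epsilon}).
\]

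I would then pull $\|PV_j\|_{L^\infty(B_\eta(\xi_{i,\epsilon}))}$ outside the integral and use the mass estimate of $V_i^{p_0}$. When $p_0>n/(n-2)$ one has $\int_{B_\eta}V_i^{p_0}=O(\delta_{i,\epsilon}^{n/(p_0+1)})$; combined with the critical hyperbola and the scaling $\delta\sim\epsilon^{(n-1)/(n-2)}$, $\eta\sim\epsilon$, the main term contributes $O(\delta^{n-2}\eta)=O(\epsilon^n)$ and the $R_2$-error contributes $O((\delta/\eta)^{n-1})=O(\epsilon^{1+1/(n-2)})$. When $p_0<n/(n-2)$ instead $\int_{B_\eta}V_i^{p_0}=O(\delta_{i,\epsilon}^{np_0/(q_0+1)}\eta^{n-(n-2)p_0})$; the identity $n(p_0+1)/(q_0+1)=(n-2)p_0-2$ and the scaling $\delta\sim\epsilon^{((n-2)p_0-1)/((n-2)p_0-2)}$ again give an $O(\epsilon^n)$ main term and an $R_2$-error of order $(\delta/\eta)^{(n-2)p_0-1}=\epsilon^{1+1/((n-2)p_0-2)}$. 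Both cases lie comfortably within $O(\epsilon^{1+\sigma})$.

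The principal obstacle is precisely this boundary cancellation: without exploiting the asymptotic match between the far-field tail of $V_j$ and the reflected fundamental-solution asymptotic of $H$ from Lemma \ref{lemH}, the bound degrades to $O(\epsilon)$, insufficient to be absorbed into the reduction error. A pleasant feature is that the decomposition of $V_j-PV_j$ furnished by Lemma \ref{lemP'2} is structurally identical to that of Lemma \ref{lemP}, so no case distinction is needed in the pointwise estimate for $PV_j$; the two exponent regimes are separated only at the last step through the mass estimate of $V_i^{p_0}$, after which the critical hyperbola forces both to collapse to the same $O(\epsilon^n)$ order.
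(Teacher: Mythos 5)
Your proof is correct, and it runs on the same ingredients as the paper's own argument: the exact decomposition $PV_j=V_j-\frac{b_{n,p_0}}{\gamma_n}\delta_{j,\epsilon}^{\frac{n}{q_0+1}}H(\cdot,\xi_{j,\epsilon})+R_{2,\xi_{j,\epsilon},\delta_{j,\epsilon}}$ from Lemma \ref{lemP} (resp.\ Lemma \ref{lemP'2}), the boundary expansion of $H$ from Lemma \ref{lemH}, and the mass of $V_i^{p_0}$ on $B_\eta(\xi_{i,\epsilon})$ in the two exponent regimes; your final orders $O(\epsilon^{n})+O((\delta/\eta)^{n-1})$ and $O(\epsilon^{n})+O((\delta/\eta)^{(n-2)p_0-1})$ are indeed $O(\epsilon^{1+\sigma})$, and your remark that only the $V$-side of Lemmas \ref{lemP} and \ref{lemP'2} enters (so the case split appears only in the mass estimate) matches the paper. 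Where your write-up goes astray is in the assessment of what is needed: on a compact subset of $\Gamma$ the points $\xi_i\neq\xi_j$ are fixed, uniformly separated points of $\partial\Omega$, and only $t_i,\Lambda_i$ carry the $\epsilon$-scaling, so for $x\in B_\eta(\xi_{i,\epsilon})$ one has $|x-\xi_{j,\epsilon}|\geq c>0$, not merely $\geq\eta$. Hence the ``naive'' bound $PV_j\leq V_j\leq C\delta_{j,\epsilon}^{\frac{n}{q_0+1}}$ already gives $O(\delta^{n-2})=O(\epsilon^{n-1})$ when $p_0>\frac n{n-2}$ and $O\big(\delta^{(n-2)p_0-2}\eta^{\,n-(n-2)p_0}\big)=O(\epsilon^{n-1})$ when $p_0<\frac n{n-2}$, which is already well inside $O(\epsilon^{1+\sigma})$ and is exactly the $O(\delta^{n-2})$, $O(\delta^{(n-2)p_0-2})$ the paper records; your claim that the naive estimate only produces $O((\delta/\eta)^{n-2})=O(\epsilon)$ rests on the too-pessimistic lower bound $|x-\xi_{j,\epsilon}|\gtrsim\eta$. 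So the reflection cancellation between the tail of $V_j$ and the $H$-term, which you present as the crux, is a harmless refinement here but not the point of the lemma (the reflected-point structure genuinely matters only when $i=j$, as in Lemmas \ref{lema2} and \ref{lema2'}, where the term is of size $\epsilon$ and must be computed exactly). One last small caveat: the refinement $V_{0,1}(r)=b_{n,p_0}r^{2-n}+O(r^{1-n})$ that you quote from \eqref{Uasym} is stated in the paper only in the subsection $p_0>\frac n{n-2}$; it does persist for $p_0<\frac n{n-2}$ since $U^{q_0}$ decays like $r^{-((n-2)p_0-2)q_0}$ with $((n-2)p_0-2)q_0=n+2p_0+2>n+1$, and in any case any $o(r^{2-n})$ remainder would suffice for your estimate, so this does not affect the validity of the argument.
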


\begin{proof}
By Lemma \ref{lemH} and Lemma \ref{lemP}, we have for $p_0>\frac n{n-2}$ that
\begin{align*}
&\int_{B_\eta(\xi_{i,\epsilon})}a(x)V_i^{p_0}PV_jdx\\
&=\int_{B_\eta(\xi_{i,\epsilon})}a(x)V_i^{p_0}(x)\Big(V_j(x)-\frac{b_{n,p_0}}{\gamma_n}\delta^{\frac{n}{q_0+1}}H(x,\xi_{j,\epsilon})+R_{2,\delta_j,\xi_{i,\epsilon}}(x)\Big)dx\\
&=\delta_{i,\epsilon}^{\frac n{q_0+1}}\delta_{j,\epsilon}^{\frac n{p_0+1}}\int_{B_{\frac{\eta}{\delta_{i,\epsilon}}}(0)}a(\delta_{i,\epsilon}y+\xi_{i,\epsilon})V^{p_0}(y)\\
&\qquad\times\Big(\delta_j^{-\frac n{p_0+1}-\frac n{q_0+1}}V\Big(\frac{|\delta_{i,\epsilon}y+\xi_{i,\epsilon}-\xi_{j,\epsilon}|}{\delta_j}\Big)
-\frac{b_{n,p_0}}{\gamma_n}\frac{1}{|\delta_{i,\epsilon}y+\xi_{i,\epsilon}-\bar\xi_{j,\epsilon}|^{n-2}}\Big)dy+O(\epsilon^{1+\sigma})\\
&=O(\delta^{n-2})+O(\epsilon^{1+\sigma})=O(\epsilon^{1+\sigma}).
\end{align*}

While by Lemma \ref{lemP'2}, for $p_0<\frac n{n-2}$, similar as the proof of Lemma \ref{lema2'}, we can prove that
\begin{align*}
&\int_{B_\eta(\xi_{i,\epsilon})}a(x)V_i^{p_0}PV_jdx%\\
%&=\int_{B_\eta(\xi_{i,\epsilon})}a(x)V_i^{p_0}(x)\Big(V_j(x)-\frac{b_{n,p_0}}{\gamma_n}\delta^{\frac{n}{q_0+1}}H(x,\xi_{j,\epsilon})+R_{2,\delta_j,\xi_{i,\epsilon}}(x)\Big)dx\\
%&=\delta_{i,\epsilon}^{\frac n{q_0+1}}\delta_{j,\epsilon}^{\frac n{p_0+1}}\int_{B_{\frac{\eta}{\delta_{i,\epsilon}}}(0)}a(\delta_{i,\epsilon}y+\xi_{i,\epsilon})V^{p_0}(y)\\
%&\qquad\times\Big(\delta_j^{-\frac n{p_0+1}-\frac n{q_0+1}}V\Big(\frac{|\delta_{i,\epsilon}y+\xi_{i,\epsilon}-\xi_{j,\epsilon}|}{\delta_j}\Big)-\frac{b_{n,p_0}}{\gamma_n}\frac{1}{|\delta_{i,\epsilon}y+\xi_{i,\epsilon}-\bar\xi_{j,\epsilon}|^{n-2}}\Big)dy+O(\epsilon^{1+\sigma})\\
 =O(\delta^{(n-2)p_0-2})+O(\epsilon^{1+\sigma})=O(\epsilon^{1+\sigma}).
\end{align*}

\end{proof}
\medskip

Now we are in a position to show the Energy expansion of the main term.

\begin{Lem}\label{lema4}
There holds that
\begin{align}\label{main}
\begin{split}
  \tilde J_0 &= I_0(W_1,W_2) = \frac {2A_1}n\sum_{i=1}^\kappa\Big(a(\xi_{i})+\langle\nabla a(\xi_{i}),\gamma(\xi_{i})\rangle t_i\epsilon \Big)\\
  &+\begin{cases} \frac{b_{n,p_0}B_2}{\gamma_n}\sum_{i=1}^\kappa a(\xi_{i})\epsilon\Big(\frac{\Lambda_i}{2t_i}\Big)^{n-2} &if\  p_0>\frac n{n-2}\\
  \frac{b_{n,p_0}\mathcal I_i}{\gamma_n}\sum_{i=1}^\kappa a(\xi_{i})\epsilon\Big(\frac{\Lambda_i}{2t_i}\Big)^{(n-2)p_0-2} &if\  p_0<\frac n{n-2}
  \end{cases}
+O(\epsilon^{1+\sigma}),
\end{split}
\end{align}
where $B_2$ and $\mathcal I_i$ are as in Lemma \ref{lema2} and Lemma\ref{lema2'}.
\end{Lem}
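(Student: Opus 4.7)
The plan is to transfer the gradient term onto a nonlinear piece via integration by parts in such a way that the uncontrolled projection error $PU_i-U_i$ cancels against the corresponding term coming from the nonlinear expansion. Concretely, using $-\Delta PV_j=U_j^{q_0}$ and $\mathrm{div}(a\nabla PV_j)=a\Delta PV_j+\nabla a\cdot\nabla PV_j$,
\begin{align*}
\int_\Omega a\,\nabla PU_i\cdot\nabla PV_j
=\int_\Omega a\,U_j^{q_0}\,PU_i-\int_\Omega PU_i\,\nabla a\cdot\nabla PV_j
=\int_\Omega a\,U_j^{q_0}\,PU_i+O(\epsilon^{1+\sigma}),
\end{align*}
where the remainder is controlled by Lemma \ref{lemP'} when $p_0>\frac{n}{n-2}$ and by Lemma \ref{lemP'2} when $p_0<\frac{n}{n-2}$. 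Summing over $i,j$, the off-diagonal pieces with $i\ne j$ are $O(\epsilon^{1+\sigma})$ by a computation identical to Lemma \ref{lema3}, and integration over $\Omega\setminus\bigcup_i B_\eta(\xi_{i,\epsilon})$ is negligible by the far-field decay of the bubbles already exploited in Lemma \ref{lemerror}. One is left with
\begin{align*}
\int_\Omega a\,\nabla W_1\cdot\nabla W_2=\sum_{i=1}^\kappa\int_{B_\eta(\xi_{i,\epsilon})} a\,U_i^{q_0+1}+\sum_{i=1}^\kappa\int_{B_\eta(\xi_{i,\epsilon})} a\,U_i^{q_0}(PU_i-U_i)+O(\epsilon^{1+\sigma}).
\end{align*}

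For the nonlinear terms, in each ball $B_\eta(\xi_{j,\epsilon})$ only $PV_j$ (resp.\ $PU_j$) is concentrated, so after the same cross-term estimate as in Lemma \ref{lema3} one has $\int_{B_\eta(\xi_{j,\epsilon})}aW_2^{p_0+1}=\int_{B_\eta(\xi_{j,\epsilon})}aPV_j^{p_0+1}+O(\epsilon^{1+\sigma})$, and similarly for $W_1^{q_0+1}$. Taylor-expanding $PV_j^{p_0+1}=V_j^{p_0+1}-(p_0+1)V_j^{p_0}(V_j-PV_j)+\mathrm{(h.o.t.)}$ and using the pointwise bounds of Lemma \ref{lemP} (or Lemma \ref{lemP'2}) to absorb the higher-order terms yields
\begin{align*}
\tfrac{1}{p_0+1}\int_\Omega aW_2^{p_0+1}&=\tfrac{1}{p_0+1}\sum_i\int_{B_\eta(\xi_{i,\epsilon})} aV_i^{p_0+1}+\sum_i\int_{B_\eta(\xi_{i,\epsilon})} aV_i^{p_0}(PV_i-V_i)+O(\epsilon^{1+\sigma}),\\
\tfrac{1}{q_0+1}\int_\Omega aW_1^{q_0+1}&=\tfrac{1}{q_0+1}\sum_i\int_{B_\eta(\xi_{i,\epsilon})} aU_i^{q_0+1}+\sum_i\int_{B_\eta(\xi_{i,\epsilon})} aU_i^{q_0}(PU_i-U_i)+O(\epsilon^{1+\sigma}).
\end{align*}

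Now the key observation: the $\sum_i\int aU_i^{q_0}(PU_i-U_i)$ term produced by the quadratic part cancels exactly the identical term appearing in the expansion of $\frac{1}{q_0+1}\int aW_1^{q_0+1}$, leaving
\begin{align*}
I_0(W_1,W_2)=\tfrac{q_0}{q_0+1}\sum_i\int_{B_\eta(\xi_{i,\epsilon})}aU_i^{q_0+1}-\tfrac{1}{p_0+1}\sum_i\int_{B_\eta(\xi_{i,\epsilon})}aV_i^{p_0+1}-\sum_i\int_{B_\eta(\xi_{i,\epsilon})}aV_i^{p_0}(PV_i-V_i)+O(\epsilon^{1+\sigma}).
\end{align*}
Applying Lemma \ref{lema1} to the first two terms, combined with the identity $A_1=B_1$ and the critical-hyperbola identity $\frac{q_0}{q_0+1}-\frac{1}{p_0+1}=\frac{p_0q_0-1}{(p_0+1)(q_0+1)}=\frac{2}{n}$, produces the leading piece $\frac{2A_1}{n}\sum_i\bigl(a(\xi_i)+\langle\nabla a(\xi_i),\gamma(\xi_i)\rangle t_i\epsilon\bigr)$; Lemma \ref{lema2} (for $p_0>\frac{n}{n-2}$) or Lemma \ref{lema2'} (for $p_0<\frac{n}{n-2}$) then handles $-\sum_i\int aV_i^{p_0}(PV_i-V_i)$, producing the case-dependent second term in \eqref{main}.

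The main obstacle is precisely this choice of integration-by-parts direction. In the range $p_0<\frac{n}{n-2}$ no asymptotic analogue of Lemma \ref{lema2'} is available for $\int aU_i^{q_0}(PU_i-U_i)$, because the harmonic defect $U_i-PU_i$ is controlled only by the rough sup- and pointwise bounds of Lemma \ref{lemP'2} and not by an explicit Green's-function profile. The integration-by-parts scheme is engineered so that this quantity appears twice with opposite signs and drops out of the expansion before one has to evaluate it, while the $V$-side defect—which does admit the precise asymptotic of Lemmas \ref{lema2} and \ref{lema2'}—is what survives. The remaining routine tasks, namely verifying that all exterior integrals and all off-diagonal cross terms are $O(\epsilon^{1+\sigma})$, follow from the far-field estimates on $U_i$ and $V_i$ in the same way as in the proof of Lemma \ref{lemerror}.
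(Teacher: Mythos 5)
Your proposal is correct and follows essentially the same route as the paper: the same integration by parts using $-\Delta PV_j=U_j^{q_0}$ with the $\nabla a$ term controlled by the gradient estimates (Lemma \ref{lemP'}, resp.\ Lemma \ref{lemP'2'} for $p_0<\frac n{n-2}$), the same engineered cancellation of the $U$-side defect $\int a\,U_i^{q_0}(PU_i-U_i)$ between the quadratic part and $\frac1{q_0+1}\int aW_1^{q_0+1}$, and the same evaluation of the surviving $V$-side defect via Lemmas \ref{lema2}--\ref{lema2'} together with Lemma \ref{lema1} and the identity $\frac{q_0}{q_0+1}-\frac1{p_0+1}=\frac2n$. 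The only cosmetic deviations are that you bound the off-diagonal cross terms directly (the paper lets them cancel pairwise, though both work) and that the low-$p_0$ gradient bound should be cited as Lemma \ref{lemP'2'} rather than Lemma \ref{lemP'2}.
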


\begin{proof}
Recall that
\begin{align}\label{a5}
\begin{split}
  \tilde J_0&=\int_{\Omega}a(x)\nabla W_1\cdot\nabla W_2
-\frac1{p_0+1}\int_{\Omega}a(x)|W_2|^{p_0+1}dx-\frac1{q_0+1}\int_{\Omega}a(x)|W_1|^{q_0+1}dx\\
&=\int_{\Omega}a(x)W_1(-\Delta W_2)-\frac1{q_0+1}\int_{\Omega}a(x)|W_1|^{q_0+1}dx\\
&\quad-\frac1{p_0+1}\int_{\Omega}a(x)|W_2|^{p_0+1}dx-\int_{\Omega}\nabla a(x)\cdot\nabla W_2 W_1\\
&=\int_{\Omega}a(x)\sum_{j=1}^\kappa PU_j\sum_{i=1}^\kappa U_i^{q_0}-\frac1{q_0+1}\int_{\Omega}a(x)(\sum_{j=1}^\kappa PU_j)^{q_0+1}dx\\
&\quad-\frac1{p_0+1}\int_{\Omega}a(x)(\sum_{j=1}^\kappa PV_j)^{p_0+1}dx-\int_{\Omega}\nabla a(x)\cdot\nabla W_2 W_1.
\end{split}
\end{align}
By Lemma \ref{lema1}-Lemma \ref{lema3}, Lemma \ref{lemP'} and Lemma \ref{lemP'2} we can obtain \eqref{main}.

More precisely, firstly,
\begin{align}\label{e1}
\begin{split}
&\int_{\Omega}a(x)\sum_{j=1}^\kappa PU_j\sum_{i=1}^\kappa U_i^{q_0}\\
=&\int_{\Omega}a(x)\sum_{j=1}^\kappa PU_j U_j^{q_0}+\int_{\Omega}a(x)\sum_{i\neq j}^\kappa PU_jU_i^{q_0}\\
=&\int_{\Omega}a(x)\sum_{j=1}^\kappa U_j^{q_0+1}+\int_{\Omega}a(x)\sum_{j=1}^\kappa( PU_j-U_j) U_j^{q_0}
+\int_{\Omega}a(x)\sum_{i\neq j}^\kappa PU_jU_i^{q_0}.
%\\=&\int_{\Omega}a(x)\sum_{j=1}^\kappa U_j^{q_0+1}+\int_{\Omega}a(x)\sum_{j=1}^\kappa( PU_j-U_j) U_j^{q_0}+O(\epsilon^{1+\sigma}),
\end{split}
\end{align}
%where and in the following, $\sigma>0$ is a small constant.
On the other hand,
\iffalse
\begin{align*}
\begin{split}
&-\frac1{q_0+1}\int_{\Omega}a(x)(\sum_{j=1}^\kappa PU_j)^{q_0+1}dx\\
=&-\frac1{q_0+1}\int_{\Omega}a(x)\Big((\sum_{j=1}^\kappa PU_j)^{q_0+1}-(\sum_{j=1}^\kappa U_j)^{q_0+1}\Big)dx
-\frac1{q_0+1}\int_{\Omega}a(x)(\sum_{j=1}^\kappa  U_j)^{q_0+1}dx\\
=&-\int_{\Omega}a(x)\Big((\sum_{j=1}^\kappa PU_j)^{q_0}+(\sum_{j=1}^\kappa U_j)^{q_0}\Big)\sum_{j=1}^\kappa(PU_j-U_j)dx\\&
\quad-\frac1{q_0+1}\int_{\Omega}a(x)\sum_{j=1}^\kappa  U_j^{q_0+1}dx-\frac1{q_0+1}\int_{\Omega}a(x)\Big((\sum_{j=1}^\kappa  U_j)^{q_0+1}-\sum_{j=1}^\kappa  U_j^{q_0+1}\Big)dx\\
=&-\frac1{q_0+1}\int_{\Omega}a(x)\sum_{j=1}^\kappa  U_j^{q_0+1}dx+O(\epsilon^{1+\sigma}).
\end{split}
\end{align*}
\fi
\begin{align}\label{e2}
\begin{split}
&-\frac1{q_0+1}\int_{\Omega}a(x)(\sum_{j=1}^\kappa PU_j)^{q_0+1}dx\\
=&-\frac1{q_0+1}\sum_{i=1}^\kappa\int_{B_\eta(\xi_{i,\epsilon})}a(x)( PU_i)^{q_0+1}dx-\sum_{i=1}^\kappa\int_{B_\eta(\xi_{i,\epsilon})}a(x)\sum_{i\neq j}^\kappa PU_j(PU_i)^{q_0}+O(\epsilon^{1+\sigma})\\
=&-\frac1{q_0+1}\sum_{i=1}^\kappa\int_{B_\eta(\xi_{i,\epsilon})}a(x)U_i^{q_0+1}dx
-\frac1{q_0+1}\sum_{i=1}^\kappa\int_{B_\eta(\xi_{i,\epsilon})}a(x)(( PU_i)^{q_0+1}-U_i^{q_0+1})dx\\
&-\sum_{i=1}^\kappa\int_{B_\eta(\xi_{i,\epsilon})}a(x)\sum_{i\neq j}^\kappa PU_jU_i^{q_0}
-\sum_{i=1}^\kappa\int_{B_\eta(\xi_{i,\epsilon})}a(x)\sum_{i\neq j}^\kappa PU_j((PU_i)^{q_0}-U_i^{q_0})+O(\epsilon^{1+\sigma})\\
=&-\frac1{q_0+1}\sum_{i=1}^\kappa\int_{B_\eta(\xi_{i,\epsilon})}a(x)U_i^{q_0+1}dx
-\sum_{i=1}^\kappa\int_{\Omega}a(x)U_i^{q_0}(PU_i-U_i)dx\\&-\int_{\Omega}a(x)\sum_{i\neq j}^\kappa PU_jU_i^{q_0}
+O(\epsilon^{1+\sigma}).
\end{split}
\end{align}
In fact, in the above estimates, we prove in two different cases:

When $p_0<\frac n{n-2}$, it holds that for any small $\theta_0>0$,
\begin{align*}
\begin{split}
&\sum_{i=1}^\kappa\int_{B_\eta(\xi_{i,\epsilon})}a(x)\Big(\sum_{i\neq j}^\kappa PU_j\Big)^2(PU_i)^{q_0-1}\\&
=\begin{cases}\Big(\frac\delta\eta\Big)^{\frac{2n(p_0+1)}{q_0+1}-\theta_0}, &if\ \frac{n(p_0+1)(q_0-1)}{q_0+1}\geq n,\\
\Big(\frac\delta\eta\Big)^{p_0n}, &if\ \frac{n(p_0+1)(q_0-1)}{q_0+1}<n
\end{cases}=O(\epsilon^{1+\sigma})
\end{split}
\end{align*}
and similarly,
\begin{align*}
\begin{split}
&
-\sum_{i=1}^\kappa\int_{B_\eta(\xi_{i,\epsilon})}a(x)\sum_{i\neq j}^\kappa PU_j((PU_i)^{q_0}-U_i^{q_0})=O(\epsilon^{1+\sigma}).
\end{split}
\end{align*}

While when $p_0>\frac n{n-2}$, it is directly that
\begin{align*}
\begin{split}
&\sum_{i=1}^\kappa\int_{B_\eta(\xi_{i,\epsilon})}a(x)\Big(\sum_{i\neq j}^\kappa PU_j\Big)^2(PU_i)^{q_0-1}
=O\Big(\Big(\frac\delta\eta\Big)^{2(n-2)}\Big)
=O(\epsilon^{1+\sigma})
\end{split}
\end{align*}
and
\begin{align*}
\begin{split}
&
-\sum_{i=1}^\kappa\int_{B_\eta(\xi_{i,\epsilon})}a(x)\sum_{i\neq j}^\kappa PU_j((PU_i)^{q_0}-U_i^{q_0})=O\Big(\Big(\frac\delta\eta\Big)^{2(n-2)}\Big)=O(\epsilon^{1+\sigma}).
\end{split}
\end{align*}

Combining \eqref{e1} and \eqref{e2}, we obtain then
\begin{align}\label{a5'}
\begin{split}
  \tilde J_0&=-\frac1{q_0+1}\sum_{i=1}^\kappa\int_{B_\eta(\xi_{i,\epsilon})}a(x)U_i^{q_0+1}dx
+O(\epsilon^{1+\sigma})\\
&\quad-\frac1{p_0+1}\int_{\Omega}a(x)(\sum_{j=1}^\kappa PV_j)^{p_0+1}dx-\int_{\Omega}\nabla a(x)\cdot\nabla W_2 W_1.
\end{split}
\end{align}

Next, we have
\begin{align*}
\begin{split}
&-\frac1{p_0+1}\int_{\Omega}a(x)(\sum_{j=1}^\kappa PV_j)^{p_0+1}dx
\\=&-\frac1{p_0+1}\sum_{i=1}^\kappa\int_{B_\eta(\xi_{i,\epsilon})}a(x)V_i^{p_0+1}dx
-\sum_{i=1}^\kappa\int_{B_\eta(\xi_{i,\epsilon})}a(x)V_i^{p_0}(PV_i-V_i)dx
+O(\epsilon^{1+\sigma}).
\end{split}
\end{align*}
Finally, by Lemma \ref{lemP'} and Lemma \ref{lemP'2'}, we can get that
\begin{align*}
\begin{split}
&\int_{\Omega}\nabla a(x)\cdot\nabla W_2 W_1=O(\epsilon^{1+\sigma}).
\end{split}
\end{align*}

Substitute the above estimates into \eqref{a5} or \eqref{a5'}, in view of $A_1=B_1$, we have
\begin{align*}
\begin{split}
&I_0(W_1,W_2)\\
=& \frac{q_0 A_1}{q_0+1}\sum_{i=1}^\kappa\Big(a(\xi_{i})+\langle\nabla a(\xi_{i}),\gamma(\xi_{i})\rangle t_i\epsilon \Big)
-\frac{B_1}{p_0+1}\sum_{i=1}^\kappa\Big(a(\xi_{i})+\langle\nabla a(\xi_{i}),\gamma(\xi_{i})\rangle t_i\epsilon \Big)\\
&+\begin{cases} \frac{b_{n,p_0}B_2}{\gamma_n}\sum_{i=1}^\kappa a(\xi_{i})\epsilon\Big(\frac{\Lambda_i}{2t_i}\Big)^{n-2} &if p_0>\frac n{n-2}\\
  \frac{b_{n,p_0}\mathcal I_i}{\gamma_n}\sum_{i=1}^\kappa a(\xi_{i})\epsilon\Big(\frac{\Lambda_i}{2t_i}\Big)^{(n-2)p_0-2} &if p_0<\frac n{n-2}
  \end{cases}
+O(\epsilon^{1+\sigma})\\
=&\frac {2A_1}n\sum_{i=1}^\kappa\Big(a(\xi_{i})+\langle\nabla a(\xi_{i}),\gamma(\xi_{i})\rangle t_i\epsilon \Big)\\&+\begin{cases} \frac{b_{n,p_0}B_2}{\gamma_n}\sum_{i=1}^\kappa a(\xi_{i})\epsilon\Big(\frac{\Lambda_i}{2t_i}\Big)^{n-2} &if p_0>\frac n{n-2}\\
  \frac{b_{n,p_0}\mathcal I_i}{\gamma_n}\sum_{i=1}^\kappa a(\xi_{i})\epsilon\Big(\frac{\Lambda_i}{2t_i}\Big)^{(n-2)p_0-2} &if p_0<\frac n{n-2}
  \end{cases}
+O(\epsilon^{1+\sigma}),
\end{split}
\end{align*}
which gives \eqref{main}.

\end{proof}

\begin{Lem}\label{lema5}

There holds
\begin{align}\label{main'}
\begin{split}
I_\epsilon(W_1,W_2)=&\tilde J_0(\vec\xi,\vec\Lambda,\vec t)
-\epsilon\log\epsilon\frac{n(n-1)}{n-2}\Big(\frac{A_1}{(q_0+1)^2}+\frac{B_1}{(p_0+1)^2}\Big)\sum_{j=1}^\kappa a(\xi_{i})\\
&-\epsilon\Big(\frac{\beta A_1}{(q_0+1)^2}+\frac{\alpha B_1}{(p_0+1)^2}\Big)\sum_{j=1}^\kappa a(\xi_{i})\\
&-\epsilon\Big(\frac{nA_1}{(q_0+1)^2}+\frac{nB_1}{(p_0+1)^2}\Big)\sum_{j=1}^\kappa a(\xi_{i})\log \Lambda_i\\&
+\epsilon\Big(\frac{A_3}{q_0+1}+\frac{B_3}{p_0+1}\Big)\sum_{j=1}^\kappa a(\xi_{i}).
\end{split}
\end{align}

\end{Lem}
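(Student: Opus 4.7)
The plan is to observe that $I_\epsilon$ and $I_0$ differ only in the two nonlinear terms since the bilinear form $\int_\Omega a(x)\nabla u\cdot\nabla v$ does not depend on $\epsilon$. Thus it suffices to expand in powers of $\epsilon$ the two blocks
\begin{equation*}
\mathcal D_V:=\tfrac{1}{p_0+1}\int_\Omega a W_2^{p_0+1}-\tfrac{1}{p+1}\int_\Omega a W_2^{p+1},\qquad \mathcal D_U:=\tfrac{1}{q_0+1}\int_\Omega a W_1^{q_0+1}-\tfrac{1}{q+1}\int_\Omega a W_1^{q+1}.
\end{equation*}
Using $p=p_0-\alpha\epsilon$ and the bounds $0\leq W_2\leq C\delta_{i,\epsilon}^{-n/(p_0+1)}$ near each peak (so that $|\epsilon\log W_2|=O(\epsilon|\log\epsilon|)=o(1)$ uniformly on the relevant support), I would Taylor expand $\frac{1}{p+1}=\frac{1}{p_0+1}+\frac{\alpha\epsilon}{(p_0+1)^2}+O(\epsilon^2)$ and $W_2^{p+1}=W_2^{p_0+1}\bigl(1-\alpha\epsilon\log W_2+O(\epsilon^2\log^2 W_2)\bigr)$. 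Collecting terms gives
\begin{equation*}
\mathcal D_V=-\tfrac{\alpha\epsilon}{(p_0+1)^2}\int_\Omega a W_2^{p_0+1}+\tfrac{\alpha\epsilon}{p_0+1}\int_\Omega a W_2^{p_0+1}\log W_2+O(\epsilon^{1+\sigma}),
\end{equation*}
and symmetrically for $\mathcal D_U$ with $\alpha,p_0\mapsto\beta,q_0$.

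Next I would reduce the log-weighted integrals to computable Lane-Emden integrals. For $\int_\Omega a W_2^{p_0+1}$ I simply invoke Lemma \ref{lema1}. For $\int_\Omega a W_2^{p_0+1}\log W_2$, I localize: the external piece $\Omega\setminus\bigcup_i B_\eta(\xi_{i,\epsilon})$ contributes $O(\epsilon^{1+\sigma})$ because $W_2\leq C\delta^{n/(q_0+1)}/|x-\xi_{i,\epsilon}|^{n-2}$ there (Lemma \ref{lemP}, Lemma \ref{lemP'2}), and inside $B_\eta(\xi_{i,\epsilon})$ one replaces $W_2$ by $V_i$ at the cost of $O(\epsilon^{1+\sigma})$ using the same projection estimates together with the cross-interaction bounds already used for $I_1$--$I_4$ in Lemma \ref{lemerror}. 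Then I perform the change of variables $y=(x-\xi_{i,\epsilon})/\delta_{i,\epsilon}$. The identity $\log V_i(x)=-\frac{n}{p_0+1}\log\delta_{i,\epsilon}+\log V(y)$ separates out the $\log\delta_{i,\epsilon}$ factor cleanly, and the Taylor expansion $a(\delta_{i,\epsilon}y+\xi_{i,\epsilon})=a(\xi_i)+\langle\nabla a(\xi_i),\gamma(\xi_i)\rangle\eta_i+O(\delta_{i,\epsilon}|y|+\eta_i^2)$ produces
\begin{equation*}
\int_{B_\eta(\xi_{i,\epsilon})}a W_2^{p_0+1}\log W_2\,dx=a(\xi_i)\Bigl[B_3-\tfrac{nB_1}{p_0+1}\log\delta_{i,\epsilon}\Bigr]+O(\epsilon^{1+\sigma}),
\end{equation*}
with an analogous formula for the $U$-block producing $A_3,A_1$.

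Finally I substitute $\log\delta_{i,\epsilon}=\frac{n-1}{n-2}\log\epsilon+\log\Lambda_i$ in the range $p_0>\frac{n}{n-2}$ and $\log\delta_{i,\epsilon}=\frac{(n-2)p_0-1}{(n-2)p_0-2}\log\epsilon+\log\Lambda_i$ in the range $p_0<\frac{n}{n-2}$; in either case the prefactor is absorbed into the leading $\epsilon\log\epsilon$ constant, and summing the $\mathcal D_V+\mathcal D_U$ contributions with $\tilde J_0(\vec\xi,\vec\Lambda,\vec t)$ on the right-hand side yields the claimed decomposition. The main obstacle I anticipate is justifying the localization of $\int a W_2^{p_0+1}\log W_2$: the log factor is singular at the zeros of $W_2$ near $\partial\Omega$ and grows at the concentration points, so one must check both that $B_3=\int_{\R^n}V^{p_0+1}\log V$ is finite (this uses the sharp decay $V\sim b_{n,p_0}|y|^{-(n-2)}$ of Lemma \ref{lemasym}, which ensures $(p_0+1)(n-2)>n$), and that the quadratic-in-$\log$ remainder from the Taylor expansion contributes only $O(\epsilon^2\log^2(1/\epsilon))=o(\epsilon)$ after integration. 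Once these two uniform control points are in place, the rest of the argument is bookkeeping parallel to Lemma \ref{lema4}.
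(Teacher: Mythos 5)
Your proposal follows essentially the same route as the paper's proof: expand the two nonlinear terms via the elementary estimate $\frac{c^{r+1-b\epsilon}}{r+1-b\epsilon}-\frac{c^{r+1}}{r+1}=\epsilon\big(\frac{bc^{r+1}}{(r+1)^2}-\frac{bc^{r+1}\log c}{r+1}\big)+o(\epsilon)$, evaluate $\int_\Omega aW^{p_0+1}$ by Lemma \ref{lema1} and $\int_\Omega aW^{p_0+1}\log W$ by localization, rescaling and extraction of $\log\delta_{i,\epsilon}$, then substitute $\log\delta_{i,\epsilon}$ in terms of $\log\epsilon$ and $\log\Lambda_i$, exactly as in \eqref{main''}--\eqref{a10}. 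The only point of divergence is bookkeeping rather than substance: your (correct) Taylor expansion retains the factors $\alpha,\beta$ on the log-weighted terms, so your final constants in the $\epsilon\log\epsilon$, $\log\Lambda_i$ and $A_3,B_3$ contributions carry $\alpha,\beta$, whereas the paper's displayed \eqref{main''} and \eqref{main'} drop them (and likewise state only the $p_0>\frac{n}{n-2}$ value $\frac{n-1}{n-2}$ of the $\log\delta$ prefactor), a discrepancy originating in the paper's own formulas and immaterial for Proposition \ref{prop5.2}, where these constants are only named $c_2,c_3,c_6$.
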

\begin{proof}

Using the elementary estimate that for $c\geq0,b\in\R$ and $r>0$
\begin{align*}
\frac{c^{r+1-b\epsilon}}{r+1-b\epsilon}-\frac{c^{r+1}}{r+1}=\epsilon\Big(\frac{c^{r+1}b}{(r+1)^2}-\frac{c^{r+1}b\log c}{r+1}\Big)+o(\epsilon),
\end{align*}
we have the expansion
\begin{align}\label{main''}
\begin{split}
I_\epsilon(W_1,W_2)&=\tilde J_0(\vec\xi,\vec\Lambda,\vec t)-\frac{\alpha\epsilon}{(p_0+1)^2}\int_\Omega a(x)W_2^{p_0+1}
-\frac{\beta\epsilon}{(q_0+1)^2}\int_\Omega  a(x)W_1^{q_0+1}\\&\quad
+\frac\epsilon{p_0+1}\int_\Omega  a(x)W_2^{p_0+1}\log W_2+\frac\epsilon{q_0+1}\int_\Omega  a(x)W_1^{q_0+1}\log W_1+o(\epsilon).
\end{split}
\end{align}

Firstly, as we have shown in Lemma \ref{lema4},
\begin{align}\label{a8}
\begin{split}
&-\frac{\alpha\epsilon}{(p_0+1)^2}\int_\Omega  a(x)W_2^{p_0+1}-\frac{\beta\epsilon}{(q_0+1)^2}\int_\Omega  a(x)W_1^{q_0+1}\\
=&-\epsilon\Big(\frac{\alpha B_1}{(p_0+1)^2}+\frac{\beta A_1}{(q_0+1)^2}\Big)\sum_{i=1}^\kappa a(\xi_{i})+o(\epsilon).
\end{split}\end{align}

Moreover,
\begin{align}\label{a9}
\begin{split}
&\frac\epsilon{q_0+1}\int_\Omega a(x) W_1^{q_0+1}\log W_1\\
=&\frac\epsilon{q_0+1}\sum_{j=1}^\kappa\int_{B_{\eta}(\xi_{j,\epsilon})} a(x) W_1^{q_0+1}\log W_1+o(\epsilon)\\
=&\sum_{j=1}^\kappa\Big(-\frac{\epsilon n}{(q_0+1)^2}\log\delta_j\int_{B_{\eta}(\xi_{j,\epsilon})} a(x) W_1^{q_0+1}\\&\qquad
+\frac\epsilon{q_0+1}\int_{B_{\eta}(\xi_{j,\epsilon})}  a(x) W_1^{q_0+1}\log \big(\delta_j^{\frac n{q_0+1}}U_j+\delta_j^{\frac n{q_0+1}}(W_1-PU_j)\big)
\Big)+o(\epsilon)\\
=&\frac{\epsilon}{q_0+1}\sum_{j=1}^\kappa \Big(-\frac{  A_1n}{q_0+1}\log\delta_ja(\xi_j)+a(\xi_j)A_3
\Big)+o(\epsilon).
\end{split}\end{align}
Similarly,
\begin{align}\label{a10}
\begin{split}
&\frac\epsilon{p_0+1}\int_\Omega a(x) W_2^{p_0+1}\log W_2
=\frac{\epsilon}{p_0+1}\sum_{j=1}^\kappa \Big(-\frac{B_1n}{p_0+1}\log\delta_ja(\xi_j)+a(\xi_j)B_3
\Big)+o(\epsilon).
\end{split}\end{align}

To sum up, from \eqref{main''}-\eqref{a10}, we obtain \eqref{main'}.

\end{proof}

\medskip
 \noindent\textbf{Acknowledgments}
Guo was supported by NSFC grants (No.12271539).
Peng was supported by NSFC grants (No.11831009).

\end{document}